\def\MR#1{} 
\def\itemNum$#1${\item $\displaystyle#1$
   \hfill\refstepcounter{equation}(\theequation)}
\providecommand\@dotsep{5}
\renewcommand{\listoftodos}[1][\@todonotes@todolistname]{%
  \@starttoc{tdo}{#1}}
\tikzset{pullback/.style={minimum size=1.2ex,path picture={
\draw[opacity=1,black,-,#1] (-0.5ex,-0.5ex) -- (0.5ex,-0.5ex) -- (0.5ex,0.5ex);%
}}}
\newsavebox{\pullback}
\sbox\pullback{%
\begin{tikzpicture}%
\draw (0,0) -- (1ex,0ex);%
\draw (1ex,0ex) -- (1ex,1ex);%
\end{tikzpicture}}
\newsavebox{\pushout}
\sbox\pushout{%
\begin{tikzpicture}%
\draw (0,1ex) -- (0,0);%
\draw (0,0) -- (1ex,0);%
\end{tikzpicture}}
\newtheorem{Lem}{Lemma}[section]
\newtheorem{Prop}[Lem]{Proposition}
\newtheorem{Def}[Lem]{Definition}
\theoremstyle{plain}
\newtheorem{Thm}[Lem]{Theorem}
\newtheorem{Cor}[Lem]{Corollary}
\theoremstyle{definition}
\newtheorem{Rem}[Lem]{Remark}
\theoremstyle{definition}
\newenvironment{Ex}
  {\pushQED{\qed}\examplex}
  {\popQED\endexamplex}
\theoremstyle{plain}
\newtheorem{thmIntro}{Theorem}
\def\pser#1{[\![#1]\!]} 
\def\Lser#1{(\!(#1)\!)} 
\newcommand{\Hom}{\text{\textnormal{Hom}}}
\newcommand{\End}{\text{\textnormal{End}}}
\newcommand{\Aut}{\text{\textnormal{Aut}}}
\newcommand{\Out}{\text{\textnormal{Out}}}
\newcommand{\Spec}{\text{\textnormal{Spec}}\,}
\DeclareMathOperator{\im}{im}
\mathchardef\mhyphen="2D
\newcommand{\git}{/\!\!/}
\newcommand{\ev}{\text{\textnormal{ev}}}
\newcommand{\id}{\text{\textnormal{id}}}
\newcommand{\Vect}{\text{\textnormal{Vect}}}
\newcommand{\Rep}{\text{\textnormal{Rep}}}
\newcommand{\RRep}{\text{\textnormal{RRep}}}
\newcommand{\QR}{QEllR}
\newcommand{\Q}{QEll}
\newcommand{\pt}{\text{\textnormal{pt}}}
\newcommand{\G}{\mathsf{G}}
\newcommand{\Gh}{\hat{\mathsf{G}}}
\newcommand{\Hh}{\mathsf{H}}
\newcommand{\Hhh}{\hat{\mathsf{H}}}
\newcommand{\A}{\mathsf{A}}
\newcommand{\ch}{\text{\textnormal{ch}}}
\newcommand{\ph}{\text{\textnormal{ph}}}
\newcommand{\Ind}{\text{\textnormal{Ind}}}
\newcommand{\RInd}{\text{\textnormal{RInd}}}
\newcommand{\Res}{\text{\textnormal{Res}}}
\newcommand{\orb}{\text{\textnormal{orb}}}
\newcommand{\refl}{\text{\textnormal{ref}}}
\newcommand{\var}{\text{\textnormal{var}}}
\newcommand{\ext}{\text{\textnormal{ext}}}
\newcommand{\tor}{\text{\textnormal{tor}}}
\newcommand{\Loop}{\text{\textnormal{Loop}}}
\newcommand{\Tate}{\text{\textnormal{Tate}}}
\newcommand{\str}{\text{\textnormal{string}}}
\newcommand{\R}{\mathbb{R}}
\newcommand{\Z}{\mathbb{Z}}
\newcommand{\pr}{\text{\textnormal{pr}}}
\newcommand{\Stab}{\text{\textnormal{Stab}}}
\newcommand{\Irr}{\text{\textnormal{Irr}}}
\newcommand{\Bun}{\text{\textnormal{Bun}}}
\newcommand{\Bibun}{\text{\textnormal{Bibun}}}
\newcommand{\Ad}{\text{\textnormal{Ad}}}
\newcommand{\cpt}{\text{\textnormal{cpt}}}
\newcommand{\Grp}{\text{\textnormal{Grp}}}
\newcommand{\AbGrp}{\text{\textnormal{AbGrp}}}
\newcommand{\scale}[1]{(\;\,)_{#1}}
\newcommand{\scaleLam}[1]{(\;\,)_{#1}^{\Lambda}}
\newcommand{\Cyc}{\text{\textnormal{Cyc}}}
\def\Lser#1{(\!(#1)\!)} 
\newbox\xrat@below
\newbox\xrat@above
\newcommand{\xrightarrowtail}[2][]{%
  \setbox\xrat@below=\hbox{\ensuremath{\scriptstyle #1}}%
  \setbox\xrat@above=\hbox{\ensuremath{\scriptstyle #2}}%
  \pgfmathsetlengthmacro{\xrat@len}{max(\wd\xrat@below,\wd\xrat@above)+.6em}%
  \mathrel{\tikz [>->,baseline=-.75ex]
                 \draw (0,0) -- node[below=-2pt] {\box\xrat@below}
                                node[above=-2pt] {\box\xrat@above}
                       (\xrat@len,0) ;}}
\begin{document}
\title[Real quasi-elliptic cohomology]{Twisted Real quasi-elliptic cohomology}

\author[Z. Huan]{Zhen Huan}
\address{Zhen Huan, Center for Mathematical Sciences,
Huazhong University of Science and Technology, Hubei 430074, China} \curraddr{}
\email{2019010151@hust.edu.cn}

\author[M.\,B. Young]{Matthew B. Young}
\address{Department of Mathematics and Statistics \\ Utah State University\\
Logan, Utah 84322 \\ USA}
\email{matthew.young@usu.edu}

\date{\today}

\keywords{Elliptic cohomology. $KR$-theory. Loop spaces.}
\subjclass[2020]{Primary: 55N34; Secondary 55N15, 55N91}

\begin{abstract}
In this paper we construct twisted Real quasi-elliptic cohomology as the twisted $KR$-theory of loop groupoids. The theory systematically incorporates loop rotation and reflection. After establishing basic properties of the theory, we construct twisted elliptic Pontryagin characters and, without twists, Real analogues of the string power operation of quasi-elliptic cohomology. We also explore the relation of the theory to the Tate curve.
\end{abstract}

\maketitle

\setcounter{tocdepth}{1}

\tableofcontents

\setcounter{footnote}{0}

\section*{Introduction}
\addtocontents{toc}{\protect\setcounter{tocdepth}{1}}

\subsection*{Background and motivation}

An elliptic cohomology theory is an even periodic multiplicative generalized cohomology theory whose associated formal group is a formal completion of an elliptic curve. An idea of Witten, cited in \cite{landweber1988}, is that the elliptic cohomology of a space $X$ is closely related to the $\mathbb{T}$-equivariant $K$-theory of the free loop space $LX = C^{\infty}(S^1, X)$, where the circle $\mathbb{T}$ acts on $LX$ by rotation of loops \cite{witten1988}. Witten argued that there is a close connection between elliptic cohomology and a particular class of two dimensional conformal field theories, showing that the torus partition function in such theories recovers the elliptic genus \cite{witten1987}. Building on this idea, Segal suggested that conformal field theories can be used to construct a cocycle model for elliptic cohomology \cite{segal1988,segal2007}. This idea continues to guide the development of elliptic cohomology and is fundamental to the Stolz--Teichner program \cite{stolz2004,stolz2011}. More generally, given a compact Lie group $\G$ acting on $X$ and a cohomology class $\theta \in H^4_{\G}(X;\Z)$, twisted equviariant versions of elliptic cohomology are expected and, in various settings, have been constructed \cite{ando2010,devoto1996,devoto1998,grojnowski2007,lurie2019,berwick2021,berwick2022}.

One case in which elliptic cohomology is particularly well-understood is at the (singular) Tate curve over $\Spec \Z \Lser{q}$, where the theory reduces to Tate $K$-theory \cite{ando2001}. Motivated by Witten's idea, 
Ganter gave a careful interpretation of the free loop space of a groupoid and used this to construction equivariant Tate $K$-theory as the orbifold $K$-theory of the loop space of a global quotient orbifold \cite[\S 2]{ganter2007}. A twisted generalization of this theory was introduced and studied by Dove \cite{dove2019}.

Motivated by this, the first author introduced quasi-elliptic cohomology, a generalized cohomology theory $\Q^{\bullet}$ which is intermediate to $K$-theory and elliptic cohomology and recovers upon completion Tate $K$-theory \cite{huan2018}. This theory has a natural equivariant generalization: given a compact Lie group $\G$ acting on a space $X$, the $\G$-equivariant quasi-elliptic cohomology of $X$ is
\[
\Q_{\G}^{\bullet}(X) = K^{\bullet}(\Lambda (X \git \G)),
\]
where $\Lambda (X \git \G)$ is the ghost loop space of the groupoid $X \git \G$. Since loop rotation is incorporated into the morphisms of $\Lambda (X \git \G)$, quasi-elliptic cohomology directly incorporates Witten's idea. Via this theory, $\G$-equivariant Tate $K$-theory for any compact Lie group $\G$ can be defined as the $\mathbb{T}$-equivariant $K$-theory of the ghost loops. The construction of quasi-elliptic cohomology in terms of equivariant $K$-theory gives it many computational and conceptual advantages over Tate $K$-theory. Some formulations can be generalized to other equivariant cohomology theories. Other significant features of quasi-elliptic cohomology include naturally defined power operations \cite{huan2018b} and, when $\G$ is finite, twists of $\Q_{\G}^{\bullet}(X)$ by $H^4(B\G ; \Z)$ \cite{huan2020}.

In a different direction, a classical generalization of complex $K$-theory is Atiyah's $KR$-theory \cite{atiyah1966}. Given a space with involution $(X,\sigma_X)$, elements of the Real $K$-theory group $KR(X)$ are constructed from Real vector bundles, that is, complex vector bundles $V \rightarrow X$ together with a lift of $\sigma_X$ to a complex antilinear involution of $V$. Like complex $K$-theory, there are twisted equivariant generalizations of $KR$-theory \cite{atiyah1969,karoubi1970}. These generalizations, together with their complex counterparts, are unified by Freed--Moore $K$-theory \cite{freed2013c,gomi2017}. In particular, given a $\Z_2$-graded group $\pi: \Gh \rightarrow \Z_2$ which acts on a space $X$ and a twisted cohomology class $\hat{\theta} \in H^{3+\pi}(B \Gh; \mathsf{U}(1))$, the associated Freed--Moore $K$-theory is denoted ${^{\pi}}K^{\bullet + \hat{\theta}}_{\Gh}(X)$.

Twisted equivariant $KR$-theory appears naturally in physical contexts which are unoriented, including orientifold string theory \cite{witten1998,gukov2000,braun2002,distler2011} and topological phases of matter with time reversal symmetry \cite{freed2013c}. Roughly speaking, in such settings orientation reversal corresponds to the involution required to formulate $KR$-theory. In view of the relationship between elliptic cohomology and conformal field theory, it is natural to expect a relationship between unoriented conformal field theories and any Real generalization of elliptic cohomology. Since reflections of the circle are symmetries of unoriented theories in two dimensions, the group of rotations $\mathsf{SO}_2(\R) \simeq \mathbb{T}$ of $S^1$ will be enhanced to an action of the orthogonal group $\mathsf{O}_2(\R) = \mathsf{SO}_2(\R) \rtimes \Z_2$. The $\mathsf{O}_2$-equivariant topology and geometry of loop spaces has been studied by a number of authors with connections to dihedral and quaternionic Hochschild homologies \cite{dunn1989,lodder1990,frauenfelder2013,ungheretti2016}. In a different direction, the $\mathsf{O}_2$-equivariant $KR$-theory of loop groups was used by Fok to formulate and prove a Real analogue of the Freed--Hopkins--Teleman theorem \cite{fok2018}, relating twisted equivariant $KR$-theory of compact Lie groups to Real positive energy representations of loop groups.

\subsection*{Main results}

For ease of exposition, in the introduction we restrict attention to finite groups. In the body of the paper, we also treat the case of compact Lie groups under the assumption of trivial twist.

In Section \ref{sec:prelim} we collect required background material on groupoids and their twisted cocycles, Real representation theory and Freed--Moore $K$-theory. One result of independent interest, giving a Mackey-type decomposition of twisted equivariant $KR$-theory in the presence of a subgroup which acts trivially. The proof builds directly on earlier examples of Mackey-type decompositions in the literature  \cite{freed2011,freed2013c,angel2018,gomez2021}.

\begin{thmIntro}[{Theorem \ref{thm:KRMackeyDecomp}}]
\label{thm:KRMackeyDecompIntro}
Let $1 \rightarrow \Hh \rightarrow \Gh \rightarrow \hat{\mathsf{Q}} \rightarrow 1$ be an exact sequence of $\Z_2$-graded finite groups with $\hat{\mathsf{Q}}$ non-trivially graded and $\hat{\theta}$ a Real central extension of $\Gh$. Let $\Gh$ act on a compact Hausdorff space $X$ such that $\Hh$ acts trivially. Then there is an isomorphism
\[
{^{\pi}}K^{\bullet+\hat{\theta}}_{\Gh}(X)
\simeq
{^{\pi}}K^{\bullet+\hat{\nu}}_{\hat{\mathsf{Q}}, \cpt}(X \times \Irr^{\theta}(\Hh))
\]
for an explicitly constructed twist $\hat{\nu}$.
\end{thmIntro}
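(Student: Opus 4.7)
The plan is to adapt the standard Mackey-type argument for twisted equivariant $K$-theory to the $\Z_2$-graded, Real setting of Freed--Moore $K$-theory. Starting from a $\hat{\theta}$-twisted $\Gh$-equivariant Real vector bundle $V \to X$, the triviality of the $\Hh$-action on $X$ means that on each fiber $V_x$ the normal subgroup $\Hh$ acts by a single $\theta$-twisted representation, which decomposes into $\theta$-isotypic components. Writing $V_\rho = \Hom_{\Hh}(\rho,V)\otimes \rho$ for each $[\rho]\in\Irr^{\theta}(\Hh)$ gives a canonical decomposition $V\simeq\bigoplus_{[\rho]\in\Irr^{\theta}(\Hh)}V_\rho$, and the multiplicity bundles $\Hom_{\Hh}(\rho,V)$ are the objects to be organized into a $\hat{\nu}$-twisted $\hat{\mathsf{Q}}$-equivariant Real bundle on $X\times\Irr^{\theta}(\Hh)$.

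First I would define the $\hat{\mathsf{Q}}$-action on $\Irr^{\theta}(\Hh)$ that governs the right-hand side. Choosing a set-theoretic section $s\colon\hat{\mathsf{Q}}\to\Gh$, an even element $q$ sends $[\rho]$ to $[\rho^{s(q)}]$, meaning the pullback by the conjugation action of $s(q)$ on $\Hh$, while an odd $q$ sends $[\rho]$ to $[\overline{\rho^{s(q)}}]$, the complex-conjugate representation. This produces a $\Z_2$-graded $\hat{\mathsf{Q}}$-action compatible with the Real structure needed to evaluate Freed--Moore $K$-theory on $X\times\Irr^{\theta}(\Hh)$. The isotypic decomposition of $V$ is equivariant under $\Gh$ in the sense that an element $g\in\Gh$ induces a map (complex linear or antilinear according to $\pi(g)$) from $V_\rho|_x$ to $V_{q\cdot\rho}|_{gx}$, where $q$ is the image of $g$ in $\hat{\mathsf{Q}}$; this is precisely the data of a $\hat{\mathsf{Q}}$-equivariant structure on the bundle of multiplicity spaces, up to the twist $\hat{\nu}$.

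Next I would assemble $\hat{\nu}$ from three pieces of cocycle data: the failure of $s$ to be a group homomorphism, the Schur-type scalars relating $\rho^{s(q)}$ (respectively $\overline{\rho^{s(q)}}$) to its chosen representative in $\Irr^{\theta}(\Hh)$, and the restriction of $\hat{\theta}$ itself. Combining these in the standard Mackey-style fashion produces a Real $2$-cocycle $\hat{\nu}$ on the action groupoid of $\hat{\mathsf{Q}}$ on $\Irr^{\theta}(\Hh)$, which pulls back to $X\times\Irr^{\theta}(\Hh)$. The isotypic decomposition then promotes to an equivalence of symmetric monoidal categories between $\hat{\theta}$-twisted $\Gh$-equivariant Real bundles on $X$ and $\hat{\nu}$-twisted $\hat{\mathsf{Q}}$-equivariant Real bundles on $X\times\Irr^{\theta}(\Hh)$, whose inverse functor is given by summation over $[\rho]$. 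Passing to Grothendieck groups yields the stated isomorphism.

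The main obstacle is the cocycle bookkeeping: one must verify that $\hat{\nu}$ is well-defined (independent of the choices of $s$ and of intertwiners $\rho\simeq\rho^{s(q)}$ up to natural equivalence of Real extensions), that it genuinely defines a Real central extension rather than only a $\U(1)$-extension, and that the proposed equivalence of categories respects the Real structures on both sides. The analogous computation in the ungraded complex setting is carried out in \cite{freed2011,freed2013c,angel2018,gomez2021}; the novelty here is in tracking how the $\Z_2$-grading on $\Gh$ forces the odd elements of $\hat{\mathsf{Q}}$ to act on $\Irr^{\theta}(\Hh)$ by twisted complex conjugation, and how this feeds into $\hat{\nu}$. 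With this verification in hand, the remainder of the argument reduces to the ungraded Mackey decomposition applied componentwise over the finite set $\Irr^{\theta}(\Hh)$.
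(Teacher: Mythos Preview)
Your plan is essentially sound for degree zero and for the finite-group setting of the introduction, but it is not the route the paper takes, and there is one genuine gap you should be aware of.

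\textbf{Comparison with the paper.} The paper does not carry out the global isotypic decomposition you describe. Instead, its proof of Theorem~\ref{thm:KRMackeyDecomp} is a two-line reduction: using the compactness and slice hypotheses together with Mayer--Vietoris for Freed--Moore $K$-theory, it reduces to the case where $X$ is a single orbit, hence a point with stabilizer $\Stab_{\Gh}(x)$; for a point the statement is then exactly \cite[Theorem~9.42]{freed2013b}. The construction of the twist $\hat{\nu}$ is given in the paragraphs \emph{preceding} the theorem, via the Schur lines $\tilde{L}_{[V],\tilde{\omega}}=\Hom_{{^{\theta}}\Hh}(W,\omega\cdot V)$ and their composition law; this is the same data you propose to extract, just organized as a $\pi$-twisted extension of the action groupoid rather than as a cocycle built from a section $s$. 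What the paper's approach buys is that it works verbatim for compact Lie groups (where $\Irr^{\theta}(\Hh)$ can be infinite and compact supports matter) and that the cocycle bookkeeping you flag as the ``main obstacle'' is entirely outsourced to \cite{freed2013b}. What your approach buys is a concrete equivalence of bundle categories with an explicit inverse, which is more informative when it works.

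\textbf{The gap.} Your argument, as written, only produces the isomorphism in degree zero: you pass from an equivalence of categories of twisted Real vector bundles to their Grothendieck groups. The theorem is stated for ${^{\pi}}K^{\bullet+\hat{\theta}}$ in all degrees. To close this you must either (a) show that your equivalence of categories is natural in $X$ and compatible with suspensions, so that it propagates through the definition of higher ${^{\pi}}K$-groups, or (b) work directly in the Fredholm-operator model of \cite{gomi2017} from the start. Option (a) is routine once you have checked that your choice of section $s$ and intertwiners can be made uniformly (independent of $X$), but it does need to be said. The paper's Mayer--Vietoris reduction sidesteps this entirely because the case of a point is already known in all degrees.
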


Motivated by the central role of loop groupoids in quasi-elliptic cohomology, in Section \ref{sec:Realoopgrpd} we present a detailed study of loop groupoids which are enhanced by extra data, including equivariance for loop rotation and reflection and cocycle twists. Let $X$ be a space acted on by a finite $\Z_2$-graded group $\Gh$. Write $\G \leq \Gh$ for $\ker \pi$. Using this data, we define a categorical $\Z_2$-action on the rotation enhanced loop groupoid $\Lambda (X \git \G)$ obtained by reflecting loops and acting on $X \git \G$ by $\Gh \slash \G \simeq \Z_2$. The \emph{unoriented loop groupoid} is defined to be the base of the resulting groupoid double cover $\Lambda (X \git \G) \rightarrow \Lambda_{\pi}^{\refl}(X \git \Gh)$. The natural morphism $\Lambda_{\pi}^{\refl}(X \git \Gh) \rightarrow B \mathsf{O}_2$ which records loop rotation and reflection. We use twisted loop transgression $\uptau_{\pi}^{\refl}(\hat{\alpha}) \in \Z^{2+\pi}(\G \git_R \Gh)$, defined in \cite{noohiyoung2022}, to construct from a cocycle $\hat{\alpha} \in Z^3(B\Gh)$ a Jandl gerbe on $\Lambda_{\pi}^{\refl}(X \git \Gh)$, in the sense of \cite{schreiber2007}.

In Section \ref{sec:QR} we introduce the Real quasi-elliptic cohomology of a $\Gh$-space $X$ twisted by $\hat{\alpha} \in Z^3(B \Gh)$ as
\[
\QR^{\bullet + \hat{\alpha}}_{\Gh}(X) = {^{\pi}}K^{\bullet + \uptau_{\pi}^{\refl}(\hat{\alpha})}(\Lambda_{\pi}^{\refl}(X \git \Gh).
\]
The Freed--Moore twist $\pi$ on the right hand side is induced by the composition $\Lambda_{\pi}^{\refl}(X \git \Gh) \rightarrow B \mathsf{O}_2 \rightarrow B \Z_2$. In Section \ref{QR:prop} we prove that $\QR^{\bullet}$ has many parallels with Freed--Moore $K$-theory. For example, there is a forgetful map $\QR^{\bullet} \rightarrow \Q^{\bullet}$ and $\QR^{\bullet}$ has K\"{u}nneth maps and change-of-group isomorphisms. These constructions are based on corresponding statements for Freed--Moore $K$-theory. We prove in Theorem \ref{thm:QEllRChern} that there are naturally defined elliptic Pontryagin characters, generalizing the elliptic Chern characters of \cite{huan2020}. We also prove the following result, which relates $\QR^{\bullet}$ to a Real version of Tate $K$-theory (see Definition \ref{def:twistedRealTateK}).

\begin{thmIntro}[{Theorem \ref{thm:TateVsQR}}]
\label{thm:TateVsQRIntro}
There is an isomorphism
\[
KR_{\Tate}^{\bullet + \hat{\alpha}}(X \git \G)
\simeq
\QR^{\bullet + \hat{\alpha} }(X \git \G) \otimes_{KR^{\bullet}(\pt)[q^{\pm 1}]} KR^{\bullet}(\pt)\Lser{q}.
\]
\end{thmIntro}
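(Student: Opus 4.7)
The plan is to reduce the identity to a sector-by-sector comparison using a Mackey-type decomposition of both sides. First, I unpack the right-hand side. Since $\QR^{\bullet + \hat{\alpha}}_{\Gh}(X) = {^{\pi}}K^{\bullet + \uptau_{\pi}^{\refl}(\hat{\alpha})}(\Lambda_{\pi}^{\refl}(X \git \Gh))$, the unoriented loop groupoid decomposes into components indexed by $\Gh$-conjugacy classes of loop data --- a torsion element $g \in \G$, together in the reflection sectors with a partner in $\Gh \setminus \G$. On each such component the cyclic subgroup generated by loop rotation acts trivially on the underlying space, so Theorem \ref{thm:KRMackeyDecompIntro} applies to peel it off; the resulting summand is a twisted equivariant $KR$-theory of $X^g$ valued in characters of this cyclic rotation group. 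Globally these summands assemble into a $KR^{\bullet}(\pt)[q^{\pm 1}]$-module structure on $\QR^{\bullet + \hat{\alpha}}_{\Gh}(X)$, with each sector free over $KR^{\bullet}(\pt)[q^{\pm 1/n}]$ for $n$ the order of $g$.

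Second, the left-hand side $KR_{\Tate}^{\bullet + \hat{\alpha}}(X \git \G)$, defined in Definition \ref{def:twistedRealTateK} as a Real analogue of Ganter's equivariant Tate $K$-theory, admits an analogous sector decomposition in which the loop rotation is parametrized by the full circle $\mathbb{T}$ and formal completion at $q$ replaces each polynomial summand by $KR^{\bullet}(\pt)\Lser{q}$.

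Third, I compare the two decompositions. The functor of base change to $KR^{\bullet}(\pt)\Lser{q}$ acts on the $\QR$ side sector by sector, replacing $KR^{\bullet}(\pt)[q^{\pm 1/n}]$ by $KR^{\bullet}(\pt)\Lser{q}$ on the $g$-sector; this is precisely the coefficient ring of the corresponding Tate-side sector. What remains is to verify that the twisted $KR$-theories of $X^g$ on the two sides agree, which reduces to checking that the restriction of $\uptau_{\pi}^{\refl}(\hat{\alpha})$ matches sector-wise the Real Tate twist induced by $\hat{\alpha}$.

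The main obstacle will be the cocycle-level bookkeeping in the reflection sectors. There the twisted transgression of \cite{noohiyoung2022} introduces signs and choices of square roots, and interacts nontrivially with the antilinear structure of the Jandl gerbe constructed in Section \ref{sec:Realoopgrpd}. The rotation-only sectors should follow the pattern already established in the complex analogue of \cite{huan2018, huan2020, dove2019}, but matching the Jandl/Real twists in the reflected sectors is expected to be the technical heart of the argument.
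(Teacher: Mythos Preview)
Your approach is essentially the same as the paper's: decompose both sides sector by sector over $\pi_0(\G \git_R \Gh)$ via Proposition~\ref{prop:QRGlobQuot} and Definition~\ref{def:twistedRealTateK}, then apply the Mackey decomposition (this is the isomorphism~\eqref{eq:MackeyDecompExample} derived from Corollary~\ref{cor:KRMackeyDecompReform}) on each sector to peel off the rotation circle. After that the proof in the paper is a one-liner.

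Two points where your proposal drifts. First, the anticipated ``technical heart'' --- matching the Jandl/Real twist in the reflection sectors --- is empty: the twist on the Tate side is \emph{defined} in Definition~\ref{def:twistedRealTateK} to be $\hat{\theta}_g = \tilde{\uptau}_{\pi}^{\refl}(\hat{\alpha})_g$, the same cocycle that appears on the $\QR$ side, so there is nothing to compare. No square roots or sign choices enter. Second, and more substantively, you do not isolate the actual content of the comparison. On the $g$-sector the paper pulls back along the surjection $(\mathbb{R}/|g|\Z)\rtimes_{\pi} C^R_{\Gh}(g) \twoheadrightarrow \Lambda^R_{\Gh}(g)$, identifies the target with ${^{\pi}}K^{\bullet+\hat{\theta}_g}_{C^R_{\Gh}(g)}(X^g)[q^{\pm 1/|g|}]$ via~\eqref{eq:MackeyDecompExample}, and observes that the image consists precisely of those series on which $(-1,g)$ acts trivially --- which is exactly the rotation condition defining the Tate subgroup. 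Your claim that each $\QR$-sector is ``free over $KR^{\bullet}(\pt)[q^{\pm 1/n}]$'' is not quite the right picture; it sits inside such a free module cut out by the rotation condition, and recognizing this is what makes the base change to $\Lser{q}$ land on the Tate side.
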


In particular, when $\Gh = \Z_2$ and $\hat{\alpha}$ is trivial, $KR_{\Tate}^{\bullet}(X) \simeq KO^{\bullet}(X) \Lser{q}$ is well-known from topology and appears in relation to the Witten genus \cite{ando2001}.

In the final two sections of the paper, we pursue a relation between Real quasi-elliptic cohomology and the Tate curve. It is shown in \cite[Proposition 4.1.1]{luecke2019} that twisted quasi-elliptic cohomology, realized as $S^1$ completed $K$-theory, is an equivariant elliptic cohomology theory associated to the Tate curve, in the sense of the definition of equivariant elliptic cohomology given in \cite[Section 4]{luecke2019}. In Section \ref{sec:TateReal} we study the Tate curve as a Real space, with involution given by group inverse. As shown in \cite[Remark 3.13]{huan2018}, the $N$-torsion points of the Tate curve are classified by the $\Z_N$-equivariant quasi-elliptic cohomology of a point. In Section \ref{r:QEllRTatecurve} we prove a Real generalization: the $N$-torsion points of the Real Tate curve are classified by the $\Z_2$ homotopy fixed points of the $\Z_N$-equivariant Real quasi-elliptic cohomology of a point. In Section \ref{sec:powerOp} we construct a Real generalization of the power operations for $\Q^{\bullet}$ \cite{huan2018b}. These power operations mix power operations in $KR$-theory with the dilation, rotation and reflection of loops.

\begin{thmIntro}[{Theorem \ref{thm:powOpQR}}]
\label{thm:powOpQRIntro}
There are operations
\[
\{\mathbb{P}^R_N : \QR^{\bullet}_{\G}(X) \rightarrow \QR^{\bullet}_{\G \wr \Sigma_N}(X^N)\}_{N \in \Z_{\geq 0}}
\]
which satisfy Real analogues of the power operation axioms of \cite{ganter2006,huan2018b}, including coherence rules for compositions and preservation of external products.
\end{thmIntro}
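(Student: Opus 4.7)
The plan is to construct $\mathbb{P}^R_N$ as the composite of a geometric ``loop-power'' functor between unoriented loop groupoids and the classical total power operation in $KR$-theory. The two pieces correspond to the two families of $\mathsf{O}_2$-symmetries of $S^1$: rotations, which give rise to the $N$-fold covering map $\tau_N : S^1 \to S^1$ and thus a functor between loop groupoids, and reflections, with respect to which this functor must be equivariant.

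First, I would construct a functor
\[
\Phi_N : \Lambda_\pi^{\refl}(X \git \G) \to \Lambda_\pi^{\refl}(X^N \git (\G \wr \Sigma_N))
\]
by exhibiting the corresponding functor on rotation-enhanced loop groupoids $\Lambda(X \git \G) \to \Lambda(X^N \git (\G \wr \Sigma_N))$ (the functor used in the construction of Huan's complex power operations \cite{huan2018b}, which sends a ghost loop represented by $(g,x)$ to an $N$-tuple of copies together with the canonical cyclic element of order $N$ in $\Sigma_N$, modeled on $\tau_N$), and then verifying compatibility with the reflection $\Z_2$-action that defines the unoriented loop groupoid as the base of the groupoid double cover. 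This compatibility reduces to two elementary facts: that $\tau_N$ commutes with complex conjugation $z \mapsto \bar{z}$ on $S^1$, and that the componentwise reflection on $X^N$ commutes with the cyclic element up to an explicit order-reversing permutation in $\Sigma_N$. I would also check that $\Phi_N$ intertwines the projections $\Lambda_\pi^{\refl}(-) \to B\mathsf{O}_2 \to B\Z_2$, so that the Freed--Moore grading $\pi$ pulls back correctly.

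Next, on the target I would apply the total power operation in $KR$-theory, which sends a Real vector bundle $V$ to the external tensor power $V^{\boxtimes N}$ equipped with its natural $\Sigma_N$-permutation equivariance and the $N$-fold product of the Real involution. Combined with pullback along $\Phi_N$, this realizes a map
\[
{^{\pi}}K^{\bullet}(\Lambda_\pi^{\refl}(X \git \G)) \to {^{\pi}}K^{\bullet}(\Lambda_\pi^{\refl}(X^N \git (\G \wr \Sigma_N))),
\]
which we define to be $\mathbb{P}^R_N$. Functoriality in $X$ is immediate from functoriality of both $\Phi_N$ and the $KR$-theoretic power operation. Preservation of external products follows from multiplicativity of the total power operation on $KR$ (in the untwisted setting, a direct consequence of Atiyah's formulas) together with a decomposition of $\Phi_N$ on disjoint products, matching the inclusion $(\G_1 \times \G_2) \wr \Sigma_N \hookrightarrow (\G_1 \wr \Sigma_N) \times (\G_2 \wr \Sigma_N)$. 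The coherence rule $\mathbb{P}^R_M \circ \mathbb{P}^R_N$ reduces to the identity $\tau_M \circ \tau_N = \tau_{NM}$ on $S^1$ and the standard inclusion $\Sigma_N \wr \Sigma_M \hookrightarrow \Sigma_{NM}$, which suffices in the untwisted case since no cocycle data needs to be tracked.

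The main obstacle I anticipate is the precise compatibility of the reflection involution with the $\Sigma_N$-wreath structure at the level of loop groupoids. Choosing lifts of reflections through the $N$-fold covering $\tau_N$ introduces a canonical involution on the set indexing connected components of the target unoriented loop groupoid, and these lifts must be made compatibly across iterated power operations in order to realize the coherence axioms as honest equalities of functors rather than as weak $2$-isomorphisms. This is analogous to the dihedral bookkeeping present in the $\mathsf{O}_2$-equivariant theory of loop spaces \cite{dunn1989,lodder1990,ungheretti2016}, and verifying these coherences in a way that matches the unoriented loop groupoid formalism developed in Section \ref{sec:Realoopgrpd} constitutes the technical heart of the argument.
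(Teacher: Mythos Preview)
Your proposal has a genuine structural gap concerning the direction and scope of the functor $\Phi_N$.

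First, the direction: you define $\Phi_N : \Lambda_\pi^{\refl}(X \git \G) \to \Lambda_\pi^{\refl}(X^N \git (\G \wr \Sigma_N))$ and then say ``combined with pullback along $\Phi_N$'' this gives a map from $\QR^{\bullet}_{\G}(X)$ to $\QR^{\bullet}_{\G \wr \Sigma_N}(X^N)$. But pullback along $\Phi_N$ goes the other way. The paper instead works component by component on the \emph{target}: for each Real conjugacy class $(\underline{g};\sigma)$ in $\G \wr \Sigma_N$ it constructs a $B\Z_2$-graded equivalence
\[
f_{(\underline{g};\sigma)} : (X^N)^{(\underline{g};\sigma)} \git \Lambda^R_{\widehat{\G \wr \Sigma_N}}(\underline{g};\sigma) \xrightarrow{\;\sim\;} d^{\refl}_{(\underline{g};\sigma)}(X),
\]
where $d^{\refl}_{(\underline{g};\sigma)}(X)$ is an $\mathsf{O}_2$-fibered product of rescaled loop groupoids $\Lambda_\pi^{\refl,k}(X \git \Gh)$. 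One then forms external tensor powers of rescaled copies of $V$ on $d^{\refl}_{(\underline{g};\sigma)}(X)$ and pulls back along $f_{(\underline{g};\sigma)}$; see the composite~\eqref{pgs2r}.

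Second, the scope: your $\Phi_N$, landing in the component of the single $N$-cycle $c_N$, captures only one piece of $\Lambda_\pi^{\refl}(X^N \git (\G \wr \Sigma_N))$. The target decomposes as a product over all torsion Real conjugacy classes $(\underline{g};\sigma)$, and the covering degree needed on each component is not $N$ but the \emph{tuple of cycle lengths} of $\sigma$. This is precisely why the paper introduces the variable-rescaling groupoid $\Lambda^{\refl,\var}_{\pi,(\underline{g};\sigma)}(X)$ and the scaling functors $\scaleLam{k}$: the $k$-fold cover $\tau_k$ enters once for each $k$-cycle of $\sigma$, not uniformly. Your intuition about $\tau_N$ and dihedral compatibility is the right starting point, but it must be applied cycle by cycle and packaged via the equivalences $f_{(\underline{g};\sigma)}$, whose $\Z_2$-equivariance (diagram~\eqref{fgsfree}) is the actual technical verification replacing what you call the ``dihedral bookkeeping.'' Once these are in place, the axioms follow by direct modification of the complex case in \cite[Theorem~4.12]{huan2018b}.
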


We expect that Theorem \ref{thm:powOpQRIntro} can be extended to include twists. Unfortunately, we have been unable to check compatibility of the power operations with cocycle twists, owing to the difficult combinatorics involved in the candidate power operations and twisted transgression maps. However, with this in mind and motivated by the definition of equivariant power operations \cite[Definition 4.3]{ganter2006}, we define a wreath product of twists which, by Lemma \ref{twistprop} and Proposition \ref{FMKPower}, is compatible with equivariant power operations for $KR$-theory.

\subsubsection*{Acknowledgements}
The first author is supported by the Young Scientists Fund of the National Natural Science Foundation of China (Grant No. 11901591) for the project ``Quasi-elliptic cohomology and its application in geometry and topology", and the research funding from Huazhong University of Science and Technology. The second author is partially supported by a Simons Foundation Collaboration Grant for Mathematicians (Award ID 853541) and thanks Behrang Noohi for conversations. This work was initiated while the second author was visiting the Center for Mathematical Sciences at Huazhong University of Science and Technology. Both authors thank the Max Planck Institute for Mathematics for hospitality and support.

\section{Preliminary material}
\label{sec:prelim}

Let $\Z_2$ be the multiplicative group $\{\pm 1\}$ and $\mathbb{T}$ the compact Lie group of unit norm complex numbers. We identify $\mathbb{R} \slash \Z$ with $\mathbb{T}$ by $t \mapsto e^{2 \pi i t}$.

All topological spaces are assumed to be paracompact, locally contractible and completely regular.

\subsection{Groupoids}

We collect basic results about groupoids. See \cite[Appendix A]{freed2011} and \cite[\S 2]{gomi2017} for detailed discussions.

A \emph{groupoid} is a small category all of whose morphisms are isomorphisms. All groupoids in this paper are assumed to be topological. Hence, if $\mathfrak{X}$ is a groupoid, then its sets of objects $\mathfrak{X}_0$ and morphisms $\mathfrak{X}_1$ are topological spaces and all structure maps, such as the source and target maps $\partial_0, \partial_1 : \mathfrak{X}_1 \rightarrow \mathfrak{X}_0$, are continuous.

\begin{Ex}
Let a topological group $\G$ act on a space $X$. The \emph{quotient groupoid} $X \git \G$ has objects $X$ and morphisms $X \times \G$ with $\partial_0(x,g)=x$ and $\partial_1(x,g)=gx$. When $X$ consists of a single point, $X \git \G = B\G$ is the classifying groupoid of $\G$.
\end{Ex}

We work with groupoids up to local equivalence \cite[Definition A.4]{freed2011}. Groupoids $\mathfrak{X}$ and $\mathfrak{Y}$ are called \emph{weak equivalent} if there exists a diagram of local equivalences $\mathfrak{X} \leftarrow \mathfrak{Z}\rightarrow \mathfrak{Y}$. A groupoid $\mathfrak{X}$ is called a \emph{local quotient groupoid} if its coarse moduli space admits a countable open cover $\{U_{\alpha}\}_{\alpha}$ such that each full subgroupoid $\mathfrak{X}_{U_{\alpha}} \subset \mathfrak{X}$ on objects with isomorphism class in $U_{\alpha}$ is weak equivalent to a quotient groupoid $\tilde{U}_{\alpha} \git \G_{\alpha}$ for a Hausdorff space $\tilde{U}_{\alpha}$ and compact Lie group $\G_{\alpha}$

A groupoid $\mathfrak{X}$ is called \emph{essentially finite} if it has only finitely many isomorphism classes of objects and finitely many morphisms between any two objects, in which case it is equivalent to a finite disjoint union of classifying groupoids of finite groups. The simplicial chain complex $C_{\bullet}(\mathfrak{X}; \Z)$ of an essentially finite groupoid $\mathfrak{X}$ is the free abelian group on symbols $[g_n \vert \cdots \vert g_1]x_1$, with $x_1 \xrightarrow[]{g_1} x_2 \xrightarrow[]{g_2} \cdots \xrightarrow[]{g_n} x_{n+1}$ a diagram in $\mathfrak{X}$. When $\mathfrak{X}$ has a single object, we omit it from the notation. Write $C^{\bullet}(\mathfrak{X} ; \mathsf{A}) \subset \Hom_{\AbGrp}(C_{\bullet}(\mathfrak{X}; \Z), \mathsf{A})$ for the complex of normalized simplicial cochains on $\mathfrak{X}$ with coefficients in a multiplicative abelian group $\mathsf{A}$ and $Z^{\bullet}(\mathfrak{X};\mathsf{A})$ and $H^{\bullet}(\mathfrak{X};\mathsf{A})$ for its cocycles and cohomology, respectively. When $\mathsf{A} = \mathbb{T}$, we omit it from the notation, so that, for example, $C^{\bullet}(\mathfrak{X}) = C^{\bullet}(\mathfrak{X} ; \mathbb{T})$.

\begin{Def}\label{defbz2grpd}
A \emph{$B \Z_2$-graded groupoid} is a morphism of groupoids $\pi: \hat{\mathfrak{X}} \rightarrow B\Z_2$. A \emph{morphism} of $B \Z_2$-graded groupoids is a morphism of groupoids which respects the $B \Z_2$-grading.
\end{Def}

Explicitly, $\pi$ is the data of a continuous map $\pi: \hat{\mathfrak{X}}_1 \rightarrow \Z_2$ which satisfies $\pi(\varsigma_2 \circ \varsigma_1) = \pi(\varsigma_2) \pi(\varsigma_1)$ for composable morphisms $\varsigma_1, \varsigma_2 \in \hat{\mathfrak{X}}_1$. The functor $\pi$ determines an equivalence class of groupoid double covers $\mathfrak{X} \rightarrow \hat{\mathfrak{X}}$, for which we fix a representative.

Let $\hat{\mathfrak{X}}$ be a $B\Z_2$-graded essentially finite groupoid. Denote by $C^{\bullet}(\hat{\mathfrak{X}}; \mathsf{A}_{\pi})$ the complex of normalized cochains on $\hat{\mathfrak{X}}$ with coefficients in the local system $\mathsf{A}_{\pi} = \mathfrak{X} \times_{\Z_2} \mathsf{A}$, where $\Z_2$ acts on $\mathsf{A}$ by inversion. Explicitly, the differential of $\hat{\lambda} \in C^{n-1}(\hat{\mathfrak{X}};\mathsf{A}_{\pi})$ is defined by
\begin{multline*}
d \hat{\lambda} ([\varsigma_n \vert \cdots \vert \varsigma_1]x)
=
\hat{\lambda}([\varsigma_{n-1} \vert \cdots \vert \varsigma_1]x)^{\pi(\varsigma_n)}
\prod_{j=1}^{n-1} \hat{\lambda}([\varsigma_n \vert \cdots \vert \varsigma_{j+1} \varsigma_j\vert \cdots \vert \varsigma_1]x)^{(-1)^{n-j}} \times \\
\hat{\lambda}([\varsigma_n \vert \cdots \vert \varsigma_2]\varsigma_1 \cdot x)^{(-1)^n}.
\end{multline*}
As above, we write, for example, $C^{\bullet+\pi}(\mathfrak{X})$ for $C^{\bullet}(\mathfrak{X} ; \mathbb{T}_{\pi})$.

\subsection{$\Z_2$-graded groups}
\label{sec:Z2GrGrps}


A \emph{$\Z_2$-graded group} is a group homomorphism $\pi: \Gh \rightarrow \Z_2$. The \emph{ungraded group} of $\Gh$ is $\G = \ker \pi$. When $\pi$ is non-trivial, $\Gh$ is called a \emph{Real structure} on $\G$. The group $\Gh$ acts on $\G$ by Real conjugation:
\[
\varsigma \cdot g = \varsigma g^{\pi(\varsigma)} \varsigma^{-1},
\qquad
g \in \G, \;\varsigma \in \Gh.
\]
The Real centralizer of $g\in \G$ is the stabilizer subgroup
\[
C^R_{\Gh}(g) = \{ \varsigma \in \Gh \mid \varsigma g ^{\pi(\varsigma)}\varsigma^{-1} = g \} \leq \Gh.
\]
The group $C^R_{\Gh}(g)$ is $\Z_2$-graded with ungraded group the centralizer $C_{\G}(g)$. The Real centre of $\G$ is $\{g \in \G \mid C_{\Gh}^R(g) = \Gh\}$.

Denote by $\G \git \G$ the quotient groupoid resulting from $\G$ acting on itself by conjugation. Its set $\pi_0(\G \git \G)$ of connected components is the set of conjugacy classes of $\G$. A Real structure $\Gh$ on $\G$ determines an involution of $\pi_0(\G \git \G)$ by sending a conjugacy class $\mathcal{O}$ to $\omega^{-1} \mathcal{O}^{-1} \omega$, where $\omega \in \Gh \setminus \G$ is any element. This defines a partition
\begin{equation}
\label{eq:conjClassesDecomp}
\pi_0(\G \git \G)
=
\pi_0(\G \git \G)_{-1} \sqcup \pi_0(\G \git \G)_{+1},
\end{equation}
where $\pi_0(\G \git \G)_{-1}$ is the fixed point set of the involution. Note that the conjugacy class of $g \in \G$ is fixed by the involution if and only if $C_{\Gh}^R(g) \setminus C_{\G}(g) \neq \varnothing$. The set $\pi_0(\G \git_R \Gh)$ of Real conjugacy classes of $\G$ inherits from \eqref{eq:conjClassesDecomp} a partition
\begin{equation}
\label{eq:RealConjClasses}
\pi_0(\G \git_R \Gh)
=
\pi_0(\G \git \G)_{-1} \sqcup \pi_0(\G \git \G)_{+1} \slash \Z_2.
\end{equation}

\begin{Ex}
The terminal $\Z_2$-graded group is $\pi=\id: \Z_2 \rightarrow \Z_2$ and is denoted by $\Z_2$. If $\Z_2$ acts on a group $\Hh$, then so too does any $\Z_2$-graded group $\Gh$ and the resulting semi-direct product $\Hh \rtimes_{\pi} \Gh$ is $\Z_2$-graded.
\end{Ex}

\begin{Ex}
The dihedral group $D_{2n} = \langle r,s \mid r^n =e, \; s^2 =e, \; srs=r^{-1} \rangle$ is a Real structure on the cyclic group $\Z_n$ of order $n$.
\end{Ex}

\begin{Ex}
Let $\mathsf{O}_2 = \mathsf{O}_2(\mathbb{R})$ be the orthogonal group of Euclidean $\mathbb{R}^2$. The determinant $\mathsf{O}_2 \rightarrow \Z_2$ makes $\mathsf{O}_2$ into a Real structure on $\mathsf{SO}_2 \simeq \mathbb{T}$. The exact sequence of groups
\begin{equation}
\label{eq:univCovO2}
1 \rightarrow \Z \xrightarrow[]{n \mapsto (n,1)} \mathbb{R} \rtimes_{\pi} \Z_2 \rightarrow \mathsf{O}_2 \rightarrow 1
\end{equation}
is the universal cover of $\mathsf{O}_2$.
\end{Ex}

\begin{Lem}
\label{lem:invTorsor}
Let a $\Z_2$-graded compact Lie group $\Gh$ act from the right on a topological space $X$. Each element $\omega \in \Gh \setminus \G$ determines an involution $(\iota_{\omega}, \Theta_{\omega})$ of the groupoid $X \git \G$. Moreover, the set $\{(\iota_{\omega}, \Theta_{\omega})\}_{\omega \in \Gh \setminus \G}$ admits the structure of a $\G$-torsor of involutions.
\end{Lem}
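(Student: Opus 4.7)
The plan is to construct $(\iota_\omega, \Theta_\omega)$ as the restriction to $\omega$ of the natural right $\Gh$-action on the groupoid $X \git \G$, and then to transport the $\G$-torsor structure on the set $\Gh \setminus \G$ of odd-graded elements to the resulting set of involutions. On objects I set $\iota_\omega(x) = x \omega$, and on a morphism $(x, g): x \to xg$ I define $\iota_\omega(x,g) = (x\omega, \omega^{-1} g \omega)$; this is well-defined because $\pi$ is a homomorphism, which forces $\omega^{-1} g \omega \in \G$, and functoriality follows from the identity $\omega^{-1}(gh)\omega = (\omega^{-1} g \omega)(\omega^{-1} h \omega)$. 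Since $\omega^2 \in \G$, the composite $\iota_\omega^2$ coincides with the self-equivalence of $X \git \G$ given by right multiplication by $\omega^2$, which yields a canonical natural isomorphism $\Theta_\omega: \id \Rightarrow \iota_\omega^2$ defined on an object $x$ by $(\Theta_\omega)_x = (x, \omega^2)$.

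For naturality of $\Theta_\omega$ at a morphism $(x, g): x \to xg$, both composites in the relevant square reduce to $(x, g\omega^2)$. For the pseudo-involution coherence $\iota_\omega * \Theta_\omega = \Theta_\omega * \iota_\omega$, at an object $x$ the right whiskering equals $(\Theta_\omega)_{x\omega} = (x\omega, \omega^2)$, while the left whiskering equals $\iota_\omega(x, \omega^2) = (x\omega, \omega^{-1} \omega^2 \omega) = (x\omega, \omega^2)$, so the two agree.

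For the torsor structure, the set $\Gh \setminus \G$ is a $\G$-torsor under right multiplication, and I equip $\{(\iota_\omega, \Theta_\omega)\}_{\omega \in \Gh \setminus \G}$ with the transported right $\G$-action $(\iota_\omega, \Theta_\omega) \cdot g = (\iota_{\omega g}, \Theta_{\omega g})$; freeness and transitivity follow from the corresponding properties of $\Gh \setminus \G$. The compatibility with the involution structure is encoded by the factorization $\iota_{\omega g} = \rho_g \circ \iota_\omega$, where $\rho_g$ is the self-equivalence of $X \git \G$ induced by right multiplication by $g \in \G$, together with the analogous identity relating $\Theta_{\omega g}$ and $\Theta_\omega$. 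The main obstacle, in my view, is less any single calculation than formalizing what ``$\G$-torsor of involutions'' means in a 2-categorical sense compatible with the paper's conventions; once that is fixed, the verification reduces immediately to the fact that $\iota_\omega$ is built from a strict right $\Gh$-action on the quotient groupoid.
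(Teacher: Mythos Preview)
Your construction of $(\iota_\omega,\Theta_\omega)$ and the verification of the involution coherence match the paper's proof exactly.  The divergence is in the torsor part.  You transport the $\G$-torsor structure from the set $\Gh\setminus\G$ to the set $\{(\iota_\omega,\Theta_\omega)\}$ along the obvious bijection, and then note that the real issue is what ``$\G$-torsor of involutions'' should mean.  The paper takes that issue head-on: for each pair $\omega_1,\omega_2\in\Gh\setminus\G$ it writes down an explicit natural transformation $\varphi_{\omega_2,\omega_1}\colon \iota_{\omega_1}\Rightarrow\iota_{\omega_2}$ with components $x\omega_1\xrightarrow{\omega_1^{-1}\omega_2}x\omega_2$, checks that $(\id_{X\git\G},\varphi_{\omega_2,\omega_1})$ is an equivalence of groupoids-with-involution (this is the square relating $\Theta_{\omega_1}$ and $\Theta_{\omega_2}$), and verifies the cocycle identity $\varphi_{\omega_3,\omega_2}\circ\varphi_{\omega_2,\omega_1}=\varphi_{\omega_3,\omega_1}$.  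The $\G$-action is then $g\cdot(\iota_\omega,\Theta_\omega)=(\iota_{g\omega},\Theta_{g\omega})$ via $\varphi_{g\omega,\omega}$.

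Your factorization $\iota_{\omega g}=\rho_g\circ\iota_\omega$ is heading in the same direction (compose with the natural isomorphism $\rho_g\cong\id$ given by $g$ itself and you recover $\varphi$), but you stop short of writing down $\varphi$ and, crucially, of checking that it intertwines the $\Theta$'s.  That square is the actual content of the statement that these are equivalent \emph{as involutions} rather than merely as functors; without it the ``torsor of involutions'' claim is just the trivial observation that the index set $\Gh\setminus\G$ is a $\G$-torsor.
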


\begin{proof}
Let $\iota_{\omega}: X \git \G \rightarrow X \git \G$ be the functor given on objects and morphisms by $\iota_{\omega}(x) = x \omega$ and $\iota_{\omega}(g) = \omega^{-1} g \omega$, respectively. The natural isomorphism $\Theta_{\omega}: \id_{X \git \G} \Rightarrow \iota_{\omega}^2$ with components $\Theta_{\omega,x} = x \omega^2$, $x \in X$, satisfies $\iota_{\omega}(\Theta_{\omega,x}) = \Theta_{\omega, \iota_{\omega}(x)}$, whence $(\iota_{\omega},\Theta_{\omega})$ is an involution of $X \git \G$.

Given $\omega_1, \omega_2 \in \Gh \setminus \G$, let $\varphi_{\omega_2, \omega_1}: \iota_{\omega_1} \Rightarrow \iota_{\omega_2}$ be the natural transformation with components $\varphi_{\omega_2,\omega_1,x} : x \omega_1 \xrightarrow[]{\omega_1^{-1} \omega_2} x \omega_2$, $x \in X$. The diagram
\[
\begin{tikzpicture}[baseline= (a).base]
\node[scale=1.0] (a) at (0,0){
\begin{tikzcd}[column sep=6.5em,row sep=1.5em]
x \arrow{r}[above]{\Theta_{\omega_1,x}} \arrow{d}[left]{\Theta_{\omega_2,x}}& x \omega_1^2 \arrow{d}[right]{\varphi_{\omega_2,\omega_1,\iota_{\omega_1}(x)}} \\
x \omega_2^2 & \arrow{l}[below]{\iota_{\omega_2}(\varphi_{\omega_2,\omega_1,x})} x \omega_1 \omega_2
\end{tikzcd}
};
\end{tikzpicture}
\]
commutes, proving that $(\id_{X \git \G}, \varphi_{\omega, \omega^{\prime}}) : (X \git \G, \iota_{\omega}, \Theta_{\omega}) \rightarrow (X \git \G, \iota_{\omega^{\prime}}, \Theta_{\omega^{\prime}})$ is an equivalence of groupoids with involution. The coherence condition $\varphi_{\omega_3, \omega_2} \circ \varphi_{\omega_2, \omega_1} = \varphi_{\omega_3,\omega_1}$ is a direct computation. The torsor structure is obtained by letting $g \in \G$ act by $\varphi_{g\omega,\omega}: \iota_{\omega} \Rightarrow \iota_{g \omega}$, $\omega \in \Gh \setminus \G$.
\end{proof}

The set of commuting pairs in $\G$ is $\G^{(2)} = \{
(g,h) \in \G \times \G \mid gh=hg\}$. A Real structure $\Gh$ acts on $\G^{(2)}$ by simultaneous Real conjugation:
\[
\omega \cdot (g,h)
=
(\omega g^{\pi(\omega)} \omega^{-1}, \omega h^{\pi(\omega)} \omega^{-1}),
\qquad
\omega \in \Gh.
\]
Let $C^R_{\Gh}(g,h)$ be the $\Gh$-stabilizer of $(g,h) \in \G^{(2)}$ and $C_{\G}(g,h)$ its ungraded group. The Real structure determines a $\Z_2$-action on $\pi_0(\G^{(2)} \git \G)$ by $\omega \cdot (g,h) =(\omega g^{-1} \omega^{-1}, \omega h^{-1} \omega^{-1})$, where $\omega \in \Gh \setminus \G$ is any element. Analogously to the partition \eqref{eq:conjClassesDecomp}, we have
\begin{equation*}
\pi_0(\G^{(2)} \git \G)
=
\pi_0(\G^{(2)} \git \G)_{-1}
\sqcup
\pi_0(\G^{(2)} \git \G)_{+1}
\end{equation*}
in which $(g,h)$ lies in the $-1$ summand if and only if $C_{\Gh}^R(g,h) \setminus C_{\G}(g,h) \neq \varnothing$.

\subsection{Real representation theory}
\label{sec:RealRepThy}

We recall basic aspects of the Real representation theory of compact Lie groups, as introduced by Wigner, \cite[\S 26]{wigner1959}, Atiyah--Segal \cite[\S 6]{atiyah1969} and Karoubi \cite[\S I]{karoubi1970}. For recent treatments, see \cite[\S 2.3]{fok2014}, \cite[\S 3.1]{mbyoung2021b}. Given $\epsilon \in \{ \pm 1\}$ and a complex vector space $V$, write
\begin{equation}\label{eq:conjNotation}
{^{\epsilon}}V
=
\begin{cases}
V & \mbox{if } \epsilon =1, \\
\overline{V} & \mbox{if } \epsilon = -1.
\end{cases}
\end{equation}

Let $\G$ be a compact Lie group with fixed Real structure $\Gh$. A \emph{Real representation} of $\G$ (with respect to $\Gh$) is a complex vector space $V$ together with $\mathbb{C}$-linear maps $\rho_V(\omega) : {^{\pi(\omega)}}V \rightarrow V$ which satisfy $\rho_V(e)=\id_V$ and $\rho_V(\omega_2 \omega_1) = \rho_V(\omega_2) \rho_V(\omega_1)$, $\omega_1, \omega_2 \in \Gh$. The category $\RRep(\G)$ of Real representations of $\G$ and their $\Gh$-equivariant $\mathbb{C}$-linear maps is $\mathbb{R}$-linear abelian.

\begin{Ex}
If $\Gh = \G \times \Z_2$ with $\pi$ the projection to $\Z_2$, then $\RRep(\G) \simeq \Rep_{\mathbb{R}}(\G)$. More generally, if $\Gh \simeq \G \rtimes \Z_2$ is a split Real structure on $\G$, then $\RRep(\G)$ is the category of Real representations of $\G$ in the sense of Atiyah--Segal \cite[\S 6]{atiyah1969}.
\end{Ex}

The \emph{Real representation ring} $RR(\G)$ is the Grothendieck group of $\RRep(\G)$ with ring structure induced by $\otimes_{\mathbb{C}}$. There is an isomorphism of abelian groups
\begin{equation}
\label{eq:RRDecomp}
RR(\G)
\simeq
RR(\G,\mathbb{R}) \oplus RR(\G,\mathbb{C}) \oplus RR(\G,\mathbb{H}),
\end{equation}
where $RR(\G,\mathbb{F})$ is the free abelian group on isomorphism classes of irreducible Real representations with commuting field $\End_{\RRep(\G)}(V,\psi_V) \simeq \mathbb{F}$. The decomposition \eqref{eq:RRDecomp} is proved in \cite[Proposition 8.1]{atiyah1969} under the assumption that $\Gh$ is split, but the proof holds in general.
Note that $RR(\G,\mathbb{R})$ is a subring of $RR(\G)$.

Two examples play a central role in the remainder of the paper.

For the first example, let $\G = \Z_n$. The irreducible complex representations of $\Z_n$ are $U_k = \mathbb{C}$, $k=0, \dots, n-1$, with a chosen generator of $\Z_n$ acting by multiplication by $e^{\frac{2\pi i k}{n}}$. Writing $\zeta$ for the class of $U_1$, the complex representation ring is $R(\Z_n) \simeq \Z[\zeta] \slash \langle \zeta^n-1\rangle$.

\begin{Ex}
Let $\G = \Z_n$ with dihedral Real structure $\Gh = D_{2n}$. Each irreducible complex representation of $\Z_n$ admits a Real structure with commuting field $\mathbb{R}$. It follows that there is a ring isomorphism
\[
RR(\Z_n) = RR(\Z_n,\mathbb{R}) \simeq \Z[\zeta] \slash \langle \zeta^n-1\rangle
\]
where $\zeta$ is the class of $U_1$ with its unique (up to equivalence) Real structure.
\end{Ex}

For the second example, let $\G = \mathbb{T}$. The irreducible complex representations of $\mathbb{T}$ are $U_k = \mathbb{C}$, $k \in \Z$, with $z \in \mathbb{T}$ acting by multiplication by $z^k$ and $R(\mathbb{T}) \simeq \Z[q^{\pm 1}]$, where $q^{\pm 1}$ is the class of $U_{\pm 1}$.

\begin{Ex}[{\cite[Proposition 6.1(i)]{atiyah1969}}] \label{TorthoRR}
Let $\G = \mathbb{T}$ with orthogonal Real structure $\Gh = \mathsf{O}_2$. Each irreducible complex representation admits a Real structure with commuting field $\mathbb{R}$ and
\[
RR(\mathbb{T}) = RR(\mathbb{T},\mathbb{R}) \simeq \Z[q^{\pm 1}].\qedhere
\]
\end{Ex}

Unless mentioned otherwise, we consider only the dihedral Real structure on $\Z_n$ and orthogonal Real structure on $\mathbb{T}$. Note that $D_{2n}$ is naturally a $\Z_2$-graded subgroup of $\mathsf{O}_2$.

Let $\Gh$ be a non-trivially $\Z_2$-graded group. A \emph{Real central extension} of $\Gh$ is an exact sequence of $\Z_2$-graded groups
\[
1 \rightarrow \A \rightarrow \Hhh \rightarrow \Gh \rightarrow 1
\]
in which $\A$ lies in the Real centre of $\Hhh$. In particular, $\Hhh$ is non-trivially $\Z_2$-graded and $\A$ is trivially $\Z_2$-graded and abelian. When $\Hhh \simeq \A \rtimes_{\pi} \Gh$ as exact sequences of $\Z_2$-graded groups, then $\Hhh$ is called a trivial Real central extension. Of particular importance are Real central extensions with\footnote{This is called a $\pi$-twisted central extension in \cite[\S 1.1]{freed2013b}.} $\mathsf{A} = \mathbb{T}$. Given such an extension, a \emph{projective Real representation} of $\G$ is a Real representation of $\Hhh$ in which $\mathbb{T}$ acts by multiplication. If $\Gh$ is finite, then $\hat{\theta} \in Z^{2+\pi}(B\Gh)$ determines a Real central extension ${^{\hat{\theta}}}\Gh = \Gh \times \mathbb{T}$ with multiplication
\[
(\omega_2, z_2) \cdot (\omega_1, z_1)
=
(\omega_2 \omega_1, \hat{\theta}([\omega_2 \vert \omega_1]) z_2 z_1^{\pi(\omega_2)}).
\]
This construction leads to the classification of equivalences classes of Real central extensions of $\Gh$ by $H^{2+\pi}(B\Gh)$.

\subsection{Twisted $K$-theory of groupoids}
\label{sec:equivKRThy}

We recall basic aspects of the twisted $K$-theory of groupoids \cite{freed2013b,gomi2017}, a flexible framework which allows for the simultaneous treatment of complex and Real $K$-theories and their twisted equivariant generalizations. Extend the notation \eqref{eq:conjNotation} in the obvious way to the allow for $V$ to be a complex vector bundle over a space $X$ and $\epsilon: X \rightarrow \Z_2$ a continuous function.

Let $\hat{\mathfrak{X}}$ be a $B\Z_2$-graded groupoid. Write $\hat{\mathfrak{X}}_2$ for the space of composable pairs of morphisms of $\hat{\mathfrak{X}}$ and define $\partial_0, \partial_1, \partial_2: \hat{\mathfrak{X}}_2 \rightarrow \hat{\mathfrak{X}}_1$ by
\[
\partial_0(f_2,f_1) = f_1,
\qquad
\partial_1(f_2,f_1)=f_2 \circ f_1,
\qquad
\partial_2(f_2,f_1)=f_2.
\]
With this notation, a \emph{$\pi$-twisted extension}\footnote{This is an ungraded extension, in the sense of \cite{freed2013b}.} of $\hat{\mathfrak{X}}$ is a pair $(L, \hat{\theta})$ consisting of a hermitian line bundle $L \rightarrow \hat{\mathfrak{X}}_1$ and an isometry $\hat{\theta}: \partial_2^* L \otimes {^{\pi \circ \partial_2}}\partial_0^* L \rightarrow \partial_1^* L$ of hermitian line bundles on $\hat{\mathfrak{X}}_2$. The map $\hat{\theta}$ is required to satisfy an obvious twisted $2$-cocycle condition \cite[Diagram 7.11]{freed2013b}. 

A $\pi$-twisted extension of $\hat{\mathfrak{X}}$ is an example of a $\pi$-twist of $\hat{\mathfrak{X}}$ \cite[Definition 2.3]{gomi2017}. Since we will not encounter general $\pi$-twists, we do not define them here. Instead, we remark that $\pi$-twists form a symmetric monoidal category ${^{\pi}}\mathfrak{Twist}^+(\hat{\mathfrak{X}})$ whose group of equivalence classes of objects is $\pi_0({^{\pi}}\mathfrak{Twist}^+(\hat{\mathfrak{X}})) \simeq H^3(\hat{\mathfrak{X}}; \Z_{\pi})$. Morphisms of $B \Z_2$-graded groupoids induce pullback functors on categories of $\pi$-twists. In particular, if a $\Z_2$-graded finite group $\Gh$ acts on a space $X$ determines a $\pi$-twist of $X \git \Gh$.

Let $\hat{\mathfrak{X}}$ be a $B \Z_2$-graded local quotient groupoid and $\hat{\theta} \in {^{\pi}}\mathfrak{Twist}^+(\hat{\mathfrak{X}})$. The twisted $K$-theory group ${^{\pi}}K^{\bullet + \hat{\theta}}(\hat{\mathfrak{X}})$ is defined in terms of homotopy classes of sections of a bundle of Fredholm operators on a universal $\hat{\theta}$-twisted vector bundle on $\hat{\mathfrak{X}}$ \cite[\S 3]{gomi2017}. The canonical map $\mathfrak{X} \rightarrow \hat{\mathfrak{X}}$ induces a forgetful homomorphism $c: {^{\pi}}K^{\bullet+\hat{\theta}}(\hat{\mathfrak{X}}) \rightarrow K^{\bullet+\theta}(\mathfrak{X})$. The groups ${^{\pi}}K^{\bullet + \hat{\theta}}(\hat{\mathfrak{X}})$ recover various well-known $K$-theories \cite[\S 3.4]{gomi2017}:
\begin{enumerate}[wide,labelwidth=!, labelindent=0pt,label=(\roman*)]
\item If the $B \Z_2$-grading of $\hat{\mathfrak{X}}$ is trivial, so that $\hat{\mathfrak{X}}=\mathfrak{X}$, then ${^{\pi}}K^{\bullet + \hat{\theta}}(\hat{\mathfrak{X}}) = K^{\bullet + \theta}(\mathfrak{X})$ is twisted $K$-theory \cite{freed2011}. In particular, if a compact Lie group $\G$ acts on a space $X$, then $K^{\bullet}(X \git \G) \simeq K^{\bullet}_{\G}(X)$ is $\G$-equivariant $K$-theory \cite{segal1968b}.

\item Let $\Gh = \Z_2$ act on a space $X$. By a slight abuse of notation, denote by $\pi$ the induced map $X \git \Gh \rightarrow B \Z_2$. Then ${^{\pi}}K^{\bullet}(X \git \Gh) \simeq KR^{\bullet}(X)$ is $KR$-theory \cite{atiyah1966}. More generally, if $\Gh = \G \rtimes \Z_2$ is split, then ${^{\pi}}K^{\bullet}(X \git \Gh) \simeq KR^{\bullet}_{\G}(X)$ is $\G$-equivariant $KR$-theory \cite{atiyah1969}. For general $\Gh$, we continue to denote ${^{\pi}}K^{\bullet}(X \git \Gh)$ by $KR^{\bullet}_{\G}(X)$.
\end{enumerate}

We briefly recall vector bundle constructions of ${^{\pi}}K^{\bullet + \hat{\theta}}(X \git \Gh)$ under certain assumptions.

Let $\pi: \Gh \rightarrow \Z_2$ be a $\Z_2$-graded compact Lie group acting on a topological space $X$. A $\pi$-twisted $\Gh$-equivariant vector bundle on $X$ is a complex vector bundle $V \rightarrow X$ with a continuous lift of the $\Gh$-action on $X$ to $V$ given by $\mathbb{C}$-linear isomorphisms
\[
\rho_{x,\omega}: {^{\pi(\omega)}}V_x \rightarrow V_{x \omega},
\qquad
x \in X, \; \omega \in \Gh.
\]
Isomorphism classes of $\pi$-twisted $\Gh$-equivariant vector bundles form a monoid ${^{\pi}}\Vect_{\Gh}(X)$ with respect to direct sum whose Grothendieck group is ${^{\pi}}K_{\G}^0(X)$. The groups ${^{\pi}}K_{\Gh}^n(X)$, $n \in \Z$, are defined similarly using suspensions and Bott periodicity.

Assume now that $\Gh$ is finite and let $\hat{\theta} \in H^{2+\pi}(B \Gh) \simeq H^3(B \Gh; \Z_{\pi})$. Fix a representative $\hat{\theta} \in Z^{2 + \pi}(B \Gh)$, interpreted as a (Real, if $\pi$ is non-trivial) central extension of $\Gh$. Then ${^{\pi}}K^{0 + \hat{\theta}}(X \git \Gh)$ is the Grothendieck group of $(\pi,\hat{\theta})$-twisted $\Gh$-equivariant vector bundles on $X$. The twist $\hat{\theta}$ indicates that the $\Gh$-action on $X$ lifts to a $\hat{\theta}$-projective action on the total space of the bundle; see Section \ref{sec:RealRepThy}. When $\pi$ is non-trivial, a $(\pi,\hat{\theta})$-twisted $\Gh$-equivariant vector bundle is called a $\hat{\theta}$-twisted Real $\G$-equivariant vector bundle. In particular, $KR^{0 + \hat{\theta}}_{\G} (\pt)$ is the free abelian group on isomorphism classes of irreducible $\hat{\theta}$-twisted Real representations of $\G$. The latter isomorphism continues to hold for $\Gh$ compact.

Finally, recall that there is a graded ring isomorphism
\[
KR^{\bullet}(\pt)
\simeq
\Z[\eta, \mu] \slash \langle 2 \eta, \eta^3, \eta \mu, \mu^2 -4 \rangle,
\]
where $\eta$ and $\mu$ have degrees $-1$ and $-4$ and are the classes of the reduced Hopf bundles of $\mathbb{RP}^1$ and $\mathbb{HP}^1$, respectively \cite[\S 8]{atiyah1969}.

\subsection{Mackey-type decomposition of twisted $K$-theory and Atiyah--Segal maps}
\label{sec:mackeyAS}

In this section, we establish some results about twisted $KR$-theory which are used to compare Real quasi-elliptic cohomology with Tate $KR$-theory in Section \ref{sec:RealTateKThy} and construct the elliptic Pontryagin character in Section \ref{sec:ellPhChar}.

Let $
1 \rightarrow \Hh \rightarrow \Gh \rightarrow \hat{\mathsf{Q}} \rightarrow 1
$
be an exact sequence of $\Z_2$-graded compact Lie groups with $\hat{\mathsf{Q}}$ non-trivially graded. Let ${^{\hat{\theta}}}\Gh$ be a Real central extension of $\Gh$ by $\mathbb{T}$ and $\theta$ its restriction to $\Hh$, which is a central extension. There is an exact sequence
\begin{equation}
\label{eq:gradedSESLift}
1 \rightarrow {^{\theta}}\Hh \rightarrow {^{\hat{\theta}}}\Gh \xrightarrow[]{p} \hat{\mathsf{Q}} \rightarrow 1.
\end{equation}

The group $\Gh$ acts on the set $\Irr^{\theta}(\Hh)$ of isomorphism classes of irreducible $\theta$-twisted unitary representations of $\Hh$: for an irreducible $\theta$-twisted representation $\rho_V$ of $\Hh$ and $\omega \in \Gh$, choose a lift $\tilde{\omega} \in {^{\hat{\theta}}}\Gh$ of $\omega$ and define $\omega \cdot \rho_V$ by
\[
(\omega \cdot \rho_V)(\tilde{h}) = \rho_{{^{\pi(\omega)}}V}(\tilde{\omega}^{-1} \tilde{h} \tilde{\omega}),
\qquad
\tilde{h} \in {^{\theta}}\Hh.
\]
If $x \in \Hh$ with lift $\tilde{x} \in {^{\theta}}\Hh$, then $\rho_V(\tilde{x}^{-1}): \rho_V \rightarrow x \cdot \rho_V$ is a ${^{\theta}}\Hh$-equivariant isometry. In particular, $\Hh$ acts trivially on $\Irr^{\theta}(\Hh)$ and there is an induced action of $\hat{\mathsf{Q}}$ on $\Irr^{\theta}(\Hh)$. Let ${^{\hat{\theta}}}\Gh$ act on $\Irr^{\theta}(\Hh)$ through $\Gh$. 

Fix a representative $V$ of each $[V] \in \Irr^{\theta}(\Hh)$. Write $\tilde{\omega} \in {^{\hat{\theta}}}\Gh$ for any lift of $\omega \in \Gh$. By Schur's Lemma, $\tilde{L}_{[V],\tilde{\omega}} = \Hom_{{^{\theta}}\Hh}(W, \omega \cdot V)$ is a hermitian line, where $W$ is the chosen representative of $\omega \cdot [V]$. Following \cite[\S 9.4]{freed2013b}, the associative composition maps
\begin{equation}
\label{eq:nuComposition}
\tilde{L}_{\tilde{\omega}_1 \cdot [V],\tilde{\omega}_2} \otimes {^{\pi(\omega_2)}}\tilde{L}_{[V],\tilde{\omega}_1}
\rightarrow
\tilde{L}_{[V],\tilde{\omega}_2 \tilde{\omega}_1},
\qquad
f_2 \otimes f_1
\mapsto
 \omega_2 \cdot f_1 \circ f_2
\end{equation}
define a $\pi$-twisted extension of $\Irr^{\theta}(\Hh) \git {^{\hat{\theta}}}\Gh$. For $q \in \hat{\mathsf{Q}}$, let $L_{[V],q}$ be the set of all sections $s$ of
\[
\bigcup_{\tilde{\omega} \in p^{-1}(q)} \tilde{L}_{[V],\tilde{\omega}} \rightarrow p^{-1}(q)
\]
with the property that the image of $\rho_W(\tilde{h}) \otimes s(\tilde{\omega})$ under \eqref{eq:nuComposition} is $s(\tilde{h}\tilde{\omega})$ for all $\tilde{h} \in {^{\theta}}\Hh$, where now $W$ is the representative of $q \cdot V$. Exactness of the sequence \eqref{eq:gradedSESLift} implies that $L_{[V],q}$ is one dimensional. The maps \eqref{eq:nuComposition} induce on $\{L_{[V],q}\}_{[V],q}$ the structure of a $\pi$-twisted extension of $\Irr^{\theta}(\Hh) \git \hat{\mathsf{Q}}$, which we denote by $\hat{\nu}$.

We can now state a Real generalization of the Mackey-type decomposition of complex $K$-theory \cite[\S 5]{freed2011b}.

\begin{Thm}
\label{thm:KRMackeyDecomp}
Let $1 \rightarrow \Hh \rightarrow \Gh \rightarrow \hat{\mathsf{Q}} \rightarrow 1$ be an exact sequence of $\Z_2$-graded compact Lie groups with $\hat{\mathsf{Q}}$ non-trivially graded and $\hat{\theta}$ a Real central extension of $\Gh$. Let $\Gh$ act on a compact Hausdorff space $X$ with contractible local slices\footnote{Existence of contractible local slices means that each $x \in X$ admits a closed $\Gh$-stable neighbourhood of the form $\Gh \times_{\Stab_{\Gh}(x)} S_x$ for a slice $S_x$ which is $\Stab_{\Gh}(x)$-equivariantly contractible.} such that $\Hh$ acts trivially. There is an isomorphism
\[
{^{\pi}}K^{\bullet+\hat{\theta}}_{\Gh}(X)
\simeq
{^{\pi}}K^{\bullet+\hat{\nu}}_{\hat{\mathsf{Q}}, \cpt}(X \times \Irr^{\theta}(\Hh)),
\]
where $\hat{\mathsf{Q}}$ acts diagonally on $X \times \Irr^{\theta}(\Hh)$, the pullback of $\hat{\nu}$ along $(X \times \Irr^{\theta}(\Hh)) \git \hat{\mathsf{Q}} \rightarrow \Irr^{\theta}(\Hh) \git \hat{\mathsf{Q}}$ is again denoted by $\hat{\nu}$ and $K_{\cpt}(-)$ is $K$-theory with compact supports.
\end{Thm}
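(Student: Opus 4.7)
The plan is to establish the isomorphism at the level of vector bundles and then pass to $K$-theory. Since $\Hh$ acts trivially on $X$, the fibre $V_x$ of any $(\pi, \hat{\theta})$-twisted $\Gh$-equivariant vector bundle $V \to X$ carries a $\theta$-twisted unitary representation of $\Hh$. For each chosen representative $W$ of $[W] \in \Irr^{\theta}(\Hh)$, define the isotypical subbundle
\[
V_{[W]} := \Hom_{{^{\theta}}\Hh}(W, V),
\]
so that the evaluation map yields a $\hat{\theta}$-twisted $\Gh$-equivariant isomorphism $V \simeq \bigoplus_{[W]} V_{[W]} \otimes W$.

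First I would verify that $\{V_{[W]}\}_{[W]}$ defines a vector bundle on $X \times \Irr^{\theta}(\Hh)$. Local triviality uses the contractible local slice hypothesis: on a $\Gh$-stable neighbourhood of the form $\Gh \times_{\Stab_{\Gh}(x)} S_x$, the equivariant bundle is pulled back (up to isomorphism) from a $\Stab_{\Gh}(x)$-representation, and $\theta$-twisted isotypical decomposition varies locally constantly. Finite rank of $V$ forces $V_{[W]} = 0$ for all but finitely many $[W]$ on any compact subset of $X$, so $\{V_{[W]}\}$ has compact support in the $\Irr^{\theta}(\Hh)$ direction.

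Next I would transfer the equivariant structure. For $\omega \in \Gh$ with lift $\tilde{\omega} \in {^{\hat{\theta}}}\Gh$, the structure map $\rho_V(\tilde{\omega}) : {^{\pi(\omega)}}V_x \to V_{x \omega}$ restricts by Schur's lemma to a canonical isomorphism
\[
{^{\pi(\omega)}}V_{[W], x} \otimes \tilde{L}_{[W], \tilde{\omega}} \;\longrightarrow\; V_{\omega \cdot [W], x \omega}.
\]
These maps are ${^{\theta}}\Hh$-equivariant in $\tilde{\omega}$ and compatible with composition via \eqref{eq:nuComposition}, so they descend to a $(\pi, \hat{\nu})$-twisted $\hat{\mathsf{Q}}$-equivariant structure on $\{V_{[W]}\}$. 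The inverse construction sends a family $\{U_{[W]}\}$ of compactly supported $(\pi, \hat{\nu})$-twisted $\hat{\mathsf{Q}}$-equivariant bundles to $\bigoplus_{[W]} U_{[W]} \otimes W$, with $\Hh$ acting canonically on each $W$ and with $\Gh$ acting through the $\hat{\nu}$-twisted $\hat{\mathsf{Q}}$-action together with the identifications defining $\hat{\nu}$.

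One then checks that these constructions give mutually inverse exact symmetric monoidal equivalences between $(\pi, \hat{\theta})$-twisted $\Gh$-equivariant vector bundles on $X$ and $(\pi, \hat{\nu})$-twisted $\hat{\mathsf{Q}}$-equivariant vector bundles on $X \times \Irr^{\theta}(\Hh)$ with compact support over $\Irr^{\theta}(\Hh)$. Passing to Grothendieck groups yields the degree-zero isomorphism, and suspension together with Bott periodicity extends it to all degrees. The main obstacle will be careful bookkeeping of the $\Z_2$-grading: when $\pi(\omega) = -1$ the map $\rho_V(\tilde{\omega})$ is complex antilinear and the twisting line $\tilde{L}_{[W], \tilde{\omega}}$ enters under the conjugation functor, so one must verify that the cocycle \eqref{eq:nuComposition} reproduces precisely the restriction of $\hat{\theta}$ along the constructed equivalence. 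Non-compactness of $\Irr^{\theta}(\Hh)$ when $\Hh$ is infinite is absorbed by the compact support condition on the right hand side, while global-to-local reduction is handled by the contractible local slice hypothesis exactly as in \cite{freed2011,freed2013c,angel2018,gomez2021}.
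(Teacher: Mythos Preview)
Your approach is correct but genuinely different from the paper's. The paper gives a two-line proof: the contractible local slice hypothesis and Mayer--Vietoris sequences reduce the statement to the case of $\Stab_{\Gh}(x)$ acting on a singleton, where the result is \cite[Theorem 9.42]{freed2013b}. You instead give a direct global construction via the isotypical decomposition, building the equivalence of twisted equivariant vector bundle categories explicitly and then passing to $K$-theory.

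Your route has the advantage of making the isomorphism explicit on classes represented by bundles, and it clarifies exactly how the twist $\hat{\nu}$ arises from the $\Gh$-action on isotypical pieces; this is useful for computations. The cost is that you must check the vector bundle model computes ${^{\pi}}K^{\bullet+\hat{\theta}}$ as defined via Fredholm operators in \cite{gomi2017}, which is fine under the compactness hypotheses but should be stated, and you must argue naturality carefully to extend from degree zero to all degrees. The paper's reduction-to-a-point argument handles all degrees uniformly through the Mayer--Vietoris long exact sequence and avoids re-proving the representation-theoretic core, at the price of being less self-contained. Note also that you invoke the slice hypothesis only for local triviality of the isotypical subbundles, whereas the paper uses it to perform the reduction itself; your closing reference to ``global-to-local reduction'' is therefore slightly misleading, since your argument is already global.
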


\begin{proof}
The compactness and slice assumptions on $X$, together with Mayer--Vietoris sequences \cite[Theorem 3.11]{gomi2017}, reduce the theorem to the case of $\Stab_{\Gh}(x)$ acting on a singleton $\{x\}$ for some $x \in X$. In this case, the statement reduces to \cite[Theorem 9.42]{freed2013b}; see also \cite[Theorem 9.8]{freed2013b}.
\end{proof}

For later purposes, we reformulate Theorem \ref{thm:KRMackeyDecomp} in terms of $K$-theories of $X$, equivariant with respect to subgroups of $\hat{\mathsf{Q}}$. See \cite[Corollary 3.7]{angel2018} and \cite[Theorem 3.7]{gomez2021} for analogous results for complex $K$-theory. The $\Gh$-stabilizer $\Gh(\rho)$ of $\rho \in \Irr^{\theta}(\Hh)$ contains $\Hh$ as a normal subgroup. Set $\hat{\mathsf{Q}}(\rho) = \Gh(\rho) \slash \Hh$.

\begin{Cor}
\label{cor:KRMackeyDecompReform}
In the setting of Theorem \ref{thm:KRMackeyDecomp}, there is an isomorphism
\begin{equation*}
{^{\pi}}K^{\bullet+\hat{\theta}}_{\Gh}(X)
\simeq
\bigoplus_{\rho \in \Irr^{\theta}(\Hh) \slash \Gh}
{^{\pi}}K^{\bullet + \hat{\nu}_{\rho}}_{\hat{\mathsf{Q}}(\rho)}(X),
\end{equation*}
where $\hat{\nu}_{\rho}$ denotes the restriction of $\hat{\nu}$ to the subgroupoid $\{\rho\} \git \hat{\mathsf{Q}}(\rho) \subset \Irr^{\theta}(\Hh) \git \hat{\mathsf{Q}}$.
\end{Cor}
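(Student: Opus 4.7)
The plan is to apply Theorem \ref{thm:KRMackeyDecomp} and then decompose the right-hand side according to the $\hat{\mathsf{Q}}$-orbits on the discrete set $\Irr^{\theta}(\Hh)$. Since $\Hh$ acts trivially on $\Irr^{\theta}(\Hh)$, the $\Gh$-orbits and $\hat{\mathsf{Q}}$-orbits on $\Irr^{\theta}(\Hh)$ coincide. Writing
\[
X \times \Irr^{\theta}(\Hh)
=
\bigsqcup_{\rho \in \Irr^{\theta}(\Hh) \slash \Gh} X \times (\hat{\mathsf{Q}} \cdot \rho)
\]
as a disjoint union of $\hat{\mathsf{Q}}$-invariant open-and-closed pieces, and using that twisted $K$-theory with compact supports converts disjoint unions into direct sums---this is precisely why compact supports are needed when $\Irr^{\theta}(\Hh)$ is infinite---the right-hand side of Theorem \ref{thm:KRMackeyDecomp} becomes
\[
\bigoplus_{\rho \in \Irr^{\theta}(\Hh) \slash \Gh}
{^{\pi}}K^{\bullet + \hat{\nu}}_{\hat{\mathsf{Q}}, \cpt}(X \times (\hat{\mathsf{Q}} \cdot \rho)).
\]

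Next, for each chosen representative $\rho$ the inclusion $X \hookrightarrow X \times (\hat{\mathsf{Q}} \cdot \rho)$, $x \mapsto (x,\rho)$, is a $\hat{\mathsf{Q}}(\rho)$-equivariant slice of the transitive $\hat{\mathsf{Q}}$-action on $\hat{\mathsf{Q}} \cdot \rho$, which induces a weak equivalence of $B \Z_2$-graded groupoids
\[
X \git \hat{\mathsf{Q}}(\rho) \xrightarrow[]{\sim} (X \times (\hat{\mathsf{Q}} \cdot \rho)) \git \hat{\mathsf{Q}}.
\]
Because $X$ is compact, compact supports are vacuous on each summand. Invoking the Morita invariance of twisted equivariant $K$-theory from \cite[\S 3]{gomi2017} then yields
\[
{^{\pi}}K^{\bullet + \hat{\nu}}_{\hat{\mathsf{Q}}, \cpt}(X \times (\hat{\mathsf{Q}} \cdot \rho))
\simeq
{^{\pi}}K^{\bullet + \hat{\nu}_{\rho}}_{\hat{\mathsf{Q}}(\rho)}(X),
\]
which combined with the previous decomposition gives the corollary.

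The one substantive check is that the pullback of $\hat{\nu}$ along the equivalence above recovers precisely the twist $\hat{\nu}_{\rho}$ defined in the statement. This amounts to unwinding the composition formula \eqref{eq:nuComposition} restricted to lifts $\tilde{\omega} \in {^{\hat{\theta}}}\Gh$ whose image in $\hat{\mathsf{Q}}$ lies in $\hat{\mathsf{Q}}(\rho)$; since the hermitian lines $L_{[V],q}$ are defined objectwise at $[V]$, their restriction to the full subgroupoid on $\{\rho\}$ tautologically matches the defining data of $\hat{\nu}_{\rho}$. This bookkeeping is the main potential source of friction, but I expect no genuine obstacle, as the complex equivariant analogues \cite[Corollary 3.7]{angel2018} and \cite[Theorem 3.7]{gomez2021} are proved along exactly these lines and the $\pi$-twisted extension of $\Irr^{\theta}(\Hh) \git {^{\hat{\theta}}}\Gh$ built from \eqref{eq:nuComposition} descends to $\Irr^{\theta}(\Hh) \git \hat{\mathsf{Q}}$ componentwise.
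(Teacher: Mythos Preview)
Your proposal is correct and follows the same route as the paper: the paper's proof is the one-line remark that the corollary ``follows from Theorem \ref{thm:KRMackeyDecomp} and the fact that projective representations of a compact Lie group are discrete,'' and your argument simply unpacks this by decomposing $X \times \Irr^{\theta}(\Hh)$ into $\hat{\mathsf{Q}}$-orbits and applying Morita invariance on each piece. The only point the paper makes explicit that you leave implicit is that discreteness of $\Irr^{\theta}(\Hh)$ is a general fact about projective representations of compact Lie groups, which is what guarantees the orbit decomposition is into clopen pieces.
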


\begin{proof}
This follows from Theorem \ref{thm:KRMackeyDecomp} and the fact that projective representations of a compact Lie group are discrete.
\end{proof}

The next result is a twisted Real generalization of \cite[Proposition 9.31]{freed2013b} and gives a concrete description of $\hat{\nu}_{\rho}$.

\begin{Prop}
\label{prop:nuRestrict}
Work in the setting of Theorem \ref{thm:KRMackeyDecomp} with the additional assumption that $\Hh$ is abelian. For each $\rho \in \Irr^{\theta}(\Hh)$, the $\pi$-twisted extension $\hat{\nu}_{\rho}$ is realized by the Real central extension $\mathsf{E}$ in the diagram
\[
\begin{tikzpicture}[baseline= (a).base]
\node[scale=1.0] (a) at (0,0){
\begin{tikzcd}[column sep=3em]
1  \arrow{r} & \mathbb{T} \arrow{r} & \mathsf{E} \arrow{r} \arrow[dl, phantom, "\usebox\pushout" , very near start, color=black] & \hat{\mathsf{Q}}(\rho) \arrow{r} \arrow[equal]{d} & 1 \\
1  \arrow{r} & {^{\theta}}\Hh \arrow{r} \arrow[equal]{d} \arrow{u}[left]{\rho} & {^{\hat{\theta}}}\Gh(\rho) \arrow[hook]{d} \arrow{u} \arrow{r} \arrow[dr, phantom, "\usebox\pullback" , very near start, color=black] & \hat{\mathsf{Q}}(\rho) \arrow{r} \arrow[hook]{d}& 1 \\
1  \arrow{r} & {^{\theta}}\Hh \arrow{r} & {^{\hat{\theta}}}\Gh \arrow{r}[below]{p} & \hat{\mathsf{Q}} \arrow{r} & 1
\end{tikzcd}
};
\end{tikzpicture}.
\]
\end{Prop}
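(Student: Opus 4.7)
The plan is to identify both $\hat{\nu}_\rho$ and the Real central extension $\mathsf{E}$ explicitly as central extensions of $\hat{\mathsf{Q}}(\rho)$ by $\mathbb{T}$ and exhibit a canonical isomorphism between them. First, I would unpack the restriction of $\hat{\nu}$ to the subgroupoid $\{\rho\} \git \hat{\mathsf{Q}}(\rho)$. Fix the chosen representative $V$ of $\rho = [V]$. Since each $q \in \hat{\mathsf{Q}}(\rho)$ fixes $\rho$, for any lift $\tilde{\omega} \in {}^{\hat{\theta}}\Gh(\rho)$ of $q$ the line $\tilde{L}_{[V],\tilde{\omega}} = \Hom_{{}^{\theta}\Hh}(V, \omega \cdot V)$ is one-dimensional and hermitian by Schur's lemma. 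The fibre $L_{[V], q}$ thus consists of sections $s$ over $p^{-1}(q) \cap {}^{\hat{\theta}}\Gh(\rho)$ subject to the equivariance relation coming from \eqref{eq:nuComposition}, namely $s(\tilde{h}\tilde{\omega}) = \rho_V(\tilde{h}) \circ s(\tilde{\omega})$ for $\tilde{h} \in {}^{\theta}\Hh$.

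Second, I would describe $\mathsf{E}$ concretely. The abelian hypothesis on $\Hh$ ensures that $\rho$ is realized by a character $\rho : {}^{\theta}\Hh \to \mathbb{T}$, so $\mathsf{E}$ is the quotient of ${}^{\hat{\theta}}\Gh(\rho) \times \mathbb{T}$ by the central subgroup $\{(\tilde{h}, \rho(\tilde{h})^{-1}) \mid \tilde{h} \in {}^{\theta}\Hh\}$. The fibre of $\mathsf{E} \to \hat{\mathsf{Q}}(\rho)$ over $q$ is then the $\mathbb{T}$-torsor of equivalence classes $[\tilde{\omega}, z]$ with relation $[\tilde{h}\tilde{\omega}, z] = [\tilde{\omega}, \rho(\tilde{h}) z]$. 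After fixing an identification $\tilde{L}_{[V], \tilde{\omega}} \simeq \mathbb{C}$, the assignment $s \mapsto [\tilde{\omega}, s(\tilde{\omega})]$ defines a map $L_{[V], q} \to \mathsf{E}_q$ whose well-definedness is precisely the translation of the $\rho_V$-equivariance of $s$ into the defining relation of $\mathsf{E}_q$; the result is independent of the chosen trivialization and yields a hermitian isomorphism of $\mathbb{T}$-torsors.

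Finally, I would verify compatibility with composition. The multiplication on $\hat{\nu}_\rho$ is given by \eqref{eq:nuComposition}, whose $^{\pi(\omega_2)}$ factor encodes the Real structure, while multiplication on $\mathsf{E}$ is inherited from ${}^{\hat{\theta}}\Gh(\rho)$ through the pushout and likewise interacts with the $\Z_2$-grading via the Real central extension structure. The main obstacle is checking this $\pi$-twist compatibility under the identifications: one must verify that $s \mapsto [\tilde{\omega}, s(\tilde{\omega})]$ intertwines the two composition laws, including the complex conjugation that appears on the second $\mathbb{T}$-factor when the first factor is odd. This follows from the observation that $\rho$ is a $\Z_2$-equivariant map of hermitian lines with $\mathbb{T}$ carrying its inversion action, so that the defining pushout commutes with the $\Z_2$-action; the remaining coherence reduces, after forgetting the $\Z_2$-grading, to the complex analogue \cite[Proposition 9.31]{freed2013b}.
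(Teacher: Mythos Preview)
Your proposal is correct and follows essentially the same approach as the paper: both realize $\mathsf{E}$ as the associated bundle ${^{\hat{\theta}}}\Gh(\rho) \times_{{^{\theta}}\Hh} \mathbb{T}$ (your quotient description is exactly this) and then compare directly with $\hat{\nu}_\rho$. The paper's proof is terser, writing only ``a direct comparison then shows that $\mathsf{E} \times_{\mathbb{T}} \mathbb{C}$ agrees with $\hat{\nu}_{\rho}$,'' whereas you spell out the comparison map $s \mapsto [\tilde{\omega}, s(\tilde{\omega})]$ and the verification of $\pi$-twist compatibility explicitly.
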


\begin{proof}
Since $\Hh$ is abelian, so too is ${^{\theta}}\Hh$. An irreducible $\theta$-twisted unitary representation of $\Hh$ can therefore be interpreted as a group homomorphism $\rho: {^{\theta}}\Hh \rightarrow \mathbb{T}$, as in the statement of the proposition. The group $\mathsf{E}$ can be realized as the associated principal $\mathbb{T}$-bundle ${^{\hat{\theta}}}\Gh(\rho) \times_{{^{\theta}}\Hh} \mathbb{T}$ over $\hat{\mathsf{Q}}(\rho)$. A direct comparison then shows that $\mathsf{E} \times_{\mathbb{T}} \mathbb{C}$ agrees with $\hat{\nu}_{\rho}$. 
\end{proof}

\begin{Ex}
\label{ex:KRPointAllReal}
Let $\Gh$ be a $\Z_2$-graded compact Lie group with the property that all irreducible Real representations of $\G$ have commuting field $\mathbb{R}$, so that $RR(\G)=RR(\G, \mathbb{R})$ and the forgetful map $RR(\G) \rightarrow R(\G)$ is an isomorphism. If $X$ is a compact Hausdorff space on which $\Gh$ acts with $\G$ acting trivially, then Theorem \ref{thm:KRMackeyDecomp} gives a graded ring isomorphism
\begin{equation}
\label{eq:KRTrivActIso}
KR_{\G}^{\bullet}(X)
\simeq
KR^{\bullet}(X) \otimes_{\Z} KR^0_{\G}(\pt).
\end{equation}
See \cite[Proposition 8.1]{atiyah1969} and \cite[Proposition 3.3]{fok2014}, which also give direct proofs of the isomorphism \eqref{eq:KRTrivActIso}.
\end{Ex}

Applied to the groups $\Z_n$ and $\mathbb{T}$, Example \ref{ex:KRPointAllReal} implies graded ring isomorphisms
\[
KR^{\bullet}_{\Z_n}(\pt)
\simeq
KR^{\bullet}(\pt)[\zeta] \slash \langle \zeta^n-1\rangle
\]
and
\[
KR^{\bullet}_{\mathbb{T}}(\pt)
\simeq
KR^{\bullet}(\pt)[q^{\pm 1}],
\]
where $\zeta$ and $q$ have degree zero.

Finally, we require a Real analogue of the Atiyah--Segal localization map. Let a $\Z_2$-graded finite group $\Gh$ act on a finite CW complex $X$ and $\hat{\theta} \in Z^{2+\pi}(B\Gh)$. For $g \in \G$, write $X^g$ for the $g$-fixed point set of $X$ and $\hat{\theta}_g \in Z^{2+\pi}(B C_{\Gh}^R(g))$ for the restriction of $\hat{\theta}$. Restriction along the inclusions $X^g \hookrightarrow X$
defines a map
\begin{equation}
\label{eq:twistedRealAtiyahSegal}
\phi: {^{\pi}}K^{\bullet + \hat{\theta}}_{\Gh}(X)
\rightarrow
\bigoplus_{g \in \pi_0(\G \git_R \Gh) } {^{\pi}}K^{\bullet + \hat{\theta}_g}_{C^R_{\Gh}(g)}(X^g).
\end{equation}
When the $\Z_2$-grading of $\Gh$ is trivial, $\phi \otimes_{\Z} \mathbb{C}$ is an isomorphism of complex vector spaces \cite[Theorem 2]{atiyah1989}, \cite[Theorem 7.3]{adem2003}.

\subsection{Twisted transgression}
\label{sec:transMaps}

Transgression of cocycles on an orbifold $\mathfrak{X}$ to cocycles on its loop space has been studied by many authors \cite{brylinski1993,lupercio2006,willerton2008}. As the cocycles used to twist $K$-theory in this paper are of discrete torsion type, that is, are pulled back along a map $\mathfrak{X} \rightarrow B \G$ for a finite group $\G$, we restrict attention to orbifolds of the form $B \G$. See \cite{ginot2012} for background on group actions on groupoids.

Let $\G$ be a finite group. The \emph{loop (or inertia) groupoid} $\mathcal{L}B \G$ is the category of functors $B \Z \rightarrow B \G$. There is an equivalence $\mathcal{L} B \G \simeq \G \git \G$. The geometric realization of $\mathcal{L}B \G$ is homotopy equivalent to the loop space of the geometric realization of $B \G$ \cite[Proposition 6.29]{strickland2000}, \cite[Theorem 2]{willerton2008}.

Loop transgression is a cochain map $\uptau : C^{\bullet}(B \G) \rightarrow C^{\bullet -1}(\mathcal{L} B \G)$ obtained by push-pull along the correspondence
\begin{equation}\label{diag:loopDiagGrpd}
B \G \xleftarrow[]{\ev} B \Z \times \mathcal{L} B \G \xrightarrow[]{\pr_{\mathcal{L} B\G}} \mathcal{L}B \G
\end{equation}
where $\ev$ is evaluation and $\pr_{\mathcal{L} B\G}$ is the projection. An explicit model for $\uptau$ was given in \cite[Theorem 3]{willerton2008}, which for $\lambda \in C^{n+1}(B \G)$ reads
\[
\uptau(\lambda)([g_n \vert \cdots \vert g_1]\gamma)
=
\prod_{i=0}^n \lambda ([g_n \vert \cdots \vert g_{i+1} \vert \gamma_{i+1} \vert g_i \vert \cdots \vert g_1])^{(-1)^{n-i}}.
\]
Here $g_i, \gamma \in \G$ and $\gamma_i := g_i \cdots g_1 \gamma g_1^{-1} \cdots g_i^{-1}$ and we interpret $\mathcal{L} B \G$ as $\G \git \G$.

Fix a Real structure $\Gh$ on $\G$. There are a number of natural variants of $\mathcal{L} B\G$ \cite[\S 1.4]{noohiyoung2022}. The variant relevant to this paper is defined as follows. There are two natural actions of $\Z_2$ on $\mathcal{L}B \G$. The first is by negation on $B \Z$, and so on $\vert B \Z \vert \sim S^1$ by an orientation reversing homotopy involution. The second is induced by the $\Z_2$-action on $B\G \rightarrow B \Gh$ by deck transformations. The diagonal quotient groupoid
\[
\mathcal{L}_{\pi}^{\refl} B \Gh
=
\mathcal{L} B \G \git \Z_2,
\]
is called the \emph{unoriented loop groupoid} of $B \Gh$. There is an equivalence $\mathcal{L}_{\pi}^{\refl} B \Gh \simeq \G \git_R \Gh$ under which the double cover $\mathcal{L} B \G \rightarrow \mathcal{L}_{\pi}^{\refl} B \Gh$ is identified with $\G \git \G \rightarrow \G \git_R \Gh$ \cite[Lemma 1.4]{noohiyoung2022}.

Consider the correspondence obtained from \eqref{diag:loopDiagGrpd} by quotienting by $\Z_2$:
\begin{equation}\label{diag:oriLoopDiagGrpd}
B \Gh \xleftarrow[]{\ev} B \Z \times_{\Z_2} \mathcal{L} B \G \xrightarrow[]{\pr_{\mathcal{L} B\G}} \mathcal{L}_{\pi}^{\refl} B \Gh.
\end{equation}
Reflection twisted transgression is the cochain map
\[
\tilde{\uptau}^{\refl}_{\pi} : C^{\bullet}(B \Gh) \xrightarrow[]{(\pr_{\mathcal{L} B\G})_* \circ \ev^*} C^{\bullet -1 + \pi}(\mathcal{L}_{\pi}^{\refl} B \Gh).
\]
An explicit model for $\tilde{\uptau}^{\refl}_{\pi}$ was given in \cite[Theorem 2.7]{noohiyoung2022}. We require only the case $\hat{\alpha} \in C^3(B \Gh)$, which reads
\begin{multline*}
\label{eq:twistTrans3Cocyc}
\tilde{\uptau}_{\pi}^{\refl}(\hat{\alpha})([\varsigma_2 \vert \varsigma_1 ]g)=
\hat{\alpha}([g \vert g^{-1} \vert g])^{\frac{\pi(\varsigma_1)-1}{2} \frac{\pi(\varsigma_2)-1}{2}} \cdot \\
\left(
\frac{\hat{\alpha}([\varsigma_1 g^{-\pi(\varsigma_1)} \varsigma_1^{-1} \vert \varsigma_1 g^{\pi(\varsigma_1)} \varsigma_1^{-1} \vert \varsigma_1]) \hat{\alpha}([\varsigma_1 \vert g^{-\pi(\varsigma_1)} \vert g^{\pi(\varsigma_1)}])}{\hat{\alpha}([\varsigma_1 g^{-\pi(\varsigma_1)} \varsigma_1^{-1} \vert \varsigma_1 \vert g^{\pi(\varsigma_1)}])}
\right)^{-\frac{\pi(\varsigma_2) -1}{2}} \cdot \\
\frac{\hat{\alpha}([\varsigma_2 \vert \varsigma_1 \vert g^{\pi(\varsigma_2 \varsigma_1)}]) \hat{\alpha}([\varsigma_2 \varsigma_1 g^{\pi(\varsigma_2 \varsigma_1)} (\varsigma_2 \varsigma_1)^{-1} \vert \varsigma_2 \vert \varsigma_1])}{\hat{\alpha}([\varsigma_2 \vert \varsigma_1 g^{\pi(\varsigma_2 \varsigma_1)} \varsigma_1^{-1} \vert \varsigma_1])}
\end{multline*}
for $g \in \G$ and $\varsigma_1, \varsigma_2 \in \Gh$.
The diagram
\begin{equation}
\label{eq:transCompat}
\begin{tikzpicture}[baseline= (a).base]
\node[scale=1.0] (a) at (0,0){
\begin{tikzcd}
C^{\bullet}(B \Gh) \arrow{r}[above]{\tilde{\uptau}^{\refl}_{\pi}} \arrow{d} & C^{\bullet -1 + \pi}(\mathcal{L}_{\pi}^{\refl} B \Gh) \arrow{d} \\
C^{\bullet}(B \G) \arrow{r}[below]{\uptau} & C^{\bullet-1}(\mathcal{L} B \G)
\end{tikzcd}
};
\end{tikzpicture}
\end{equation}
whose vertical maps are restrictions along double covers commutes.

A second twisted transgression map is obtained from the correspondence \eqref{diag:oriLoopDiagGrpd} by taking instead twisted cochains on $B \Gh$ as the codomain and results in a cochain map
\[
\uptau^{\refl}_{\pi} : C^{\bullet + \pi}(B \Gh) \xrightarrow[]{(\pr_{\mathcal{L} B\G})_* \circ \ev^*} C^{\bullet -1}(\mathcal{L}_{\pi}^{\refl} B \Gh).
\]
An explicit model for $\uptau_{\pi}^{\refl}$ is given in \cite[Theorem 2.6]{noohiyoung2022}, which for $\hat{\theta} \in C^{2+\pi}(B \Gh)$ reads
\begin{equation*}
\label{eq:twistTrans2Cocyc}
\uptau^{\refl}_{\pi}(\hat{\theta})([\varsigma]g) = \hat{\theta}([g^{-1} \vert g])^{\frac{\pi(\varsigma)-1}{2}} \frac{\hat{\theta}([\varsigma g^{\pi(\varsigma)} \varsigma^{-1} \vert \varsigma])}{\hat{\theta}([\varsigma \vert g^{\pi(\varsigma)}])}.
\end{equation*}
The natural modification of diagram \eqref{eq:transCompat}, with $\tilde{\uptau}_{\pi}^{\refl}$ replaced with $\uptau_{\pi}^{\refl}$, commutes.

\section{Enhanced loop groupoids} \label{sec:Realoopgrpd}

We construct loop groupoids which are enhanced by additional structures, including equivariance data for rotation and reflection of loops and gerbes.

\subsection{Enhanced centralizers}
\label{sec:enCent}

We recall background on twisted loop groups \cite[\S 2]{freed2013c}.

Let $\G$ be a compact Lie group and $q: P \rightarrow S^1 = \mathbb{R} \slash \Z$ a principal $\G$-bundle on which $\G$ acts from the right. The $P$-twisted loop group $L_P \G$ is the group of bundle automorphisms of $P$. Its rotation-extended version
\[
L^{\ext}_P \G
=
\{
(t, \phi) \mid t \in \mathbb{T}, \; \phi: P \rightarrow t^* P \mbox{ is a bundle morphism}
\},
\]
where $\mathbb{T}$ acts on $S^1$ by rotation, fits in the exact sequence
\[
1 \rightarrow L_P \G \rightarrow L^{\ext}_P \G \xrightarrow[]{(t,\phi)\mapsto t} \mathbb{T} \rightarrow 1.
\]
To be concrete, let $g \in \G$ and define a $\G$-bundle $P_g= \mathbb{R} \times_{\Z} \G \rightarrow S^1$, where $\Z$ acts on $\mathbb{R} \times \G$ by $n \cdot (t,h) = (t+n,g^n h)$. Any $\G$-bundle on $S^1$ is isomorphic to $P_g$ for some $g \in \G$. The group $L_{P_g} \G$ is isomorphic to
\[
L_g \G = \{\gamma \in C^{\infty}(\mathbb{R}, \G) \mid \gamma(s+1) = g \gamma(s) g^{-1} \mbox{ for all } s\in \mathbb{R}\}.
\]
The group $\mathbb{R}$ acts on $L_g \G$ by translations, leading to an isomorphism
\[
L^{\ext}_{P_g} \G
\simeq
L_g^{\ext} \G := (L_g \G \rtimes \mathbb{R}) \slash \langle (g,1) \rangle.
\]
On the right hand side, $g \in L_g \G$ is viewed as the constant map with value $g$.

\subsection{Enhanced Real centralizers}
\label{sec:enRealCent}

We generalize the constructions of Section \ref{sec:enCent} to incorporate reflections of $S^1$.

Fix $\mathfrak{c} \in \Aut(\G)$ and let $r: S^1 \rightarrow S^1$ be the involution induced by negation of $\mathbb{R}$. This defines an action of $\mathbb{T} \rtimes_{\pi} \Z$ by coherent autoequivalences on the groupoid $\Bun_{\G}(S^1)$ of $\G$-bundles on $S^1$. At the level of objects, $(t,n) \in \mathbb{T} \rtimes_{\pi} \Z$ sends a $\G$-bundle $P$ to $(r^n)^* t^* \mathfrak{c}^n(P)$. This descends to an action of $\mathbb{T} \rtimes_{\pi} \Z_{2n}$ when $\mathfrak{c}$ has order $2n$ in $\Out(\G)$.

Let $\Gh$ be a Real structure on $\G$. Fix $\omega \in \Gh \setminus \G$ and set $\mathfrak{c} = \Ad_{\omega}$, which has order two in $\Out(\G)$. There is a natural isomorphism $\id_{\Bun_{\G}(S^1)} \Rightarrow \Ad_{\omega^2}$ whose component at $P \in \Bun_{\G}(S^1)$ is $R_{\omega^{-2}}:
P \xrightarrow[]{\sim} \Ad_{\omega^2}(P)$, $p \mapsto p \omega^{-2}$.

Given a $\G$-bundle $P \rightarrow S^1$, consider the group
\[
\tilde{L}_P^{R \mhyphen \ext} \Gh
=
\{
((t,n), \phi) \mid (t,n) \in \mathbb{T} \rtimes_{\pi} \Z, \; \phi: P \rightarrow (t,n) \cdot P \mbox{ is a bundle morphism}
\}
\]
with composition law
\[
((t_2,n_2), \phi_2) \circ ((t_1,n_1), \phi_1)
=
((t_2 + \epsilon(n_2) t_1 , n_2 + n_1),(t_1,n_1)^*\phi_2\circ \phi_1).
\]
The assignment $((t,n), \phi) \mapsto n$ defines a $\Z$-grading of $\tilde{L}_P^{R \mhyphen \ext} \Gh$ which descends to a $\Z_2$-grading of
\[
L_P^{R \mhyphen \ext} \Gh
=
\tilde{L}_P^{R \mhyphen \ext} \Gh \slash \langle ((0,2),R_{\omega^{-2}}) \rangle
\]
with ungraded group $L^{\ext}_P\G$. The assignment $((t,n), \phi) \mapsto (t,n)$ defines a $\Z_2$-graded group homomorphism $L_P^{R \mhyphen \ext} \Gh \rightarrow \mathsf{O}_2$ which fits into the exact commutative diagram
\begin{equation}
\label{diag:enhancedTwistStabCpt}
\begin{tikzpicture}[baseline= (a).base]
\node[scale=1.0] (a) at (0,0){
\begin{tikzcd}
{} & 1 \arrow{d} & 1 \arrow{d} & {} & {} \\
{} & L_P \G \arrow{d} \arrow[r,equal] & L_P \G \arrow{d} & {} & {} \\
1 \arrow{r} & L^{\ext}_P \G \arrow{r} \arrow{d} & L^{R \mhyphen \ext}_P \Gh \arrow{r} \arrow{d} & \Z_2 \arrow{r} \arrow[d,equal] & 1\\
1 \arrow{r} & \mathbb{T} \arrow{r} \arrow{d} & \mathsf{O}_2 \arrow{r} \arrow{d} & \Z_2 \arrow{r} & 1 \\
{} & 1 & 1 & {} & {}
\end{tikzcd}
};
\end{tikzpicture}
.
\end{equation}
The subgroup $L_P^R \Gh \leq L_P^{R \mhyphen \ext} \Gh$ of elements with $t=0$ is $\Z_2$-graded with ungraded group $L_P\G$.

Turning to concrete models, fix (a representative) $g \in \pi_0(\G \git \G)_{-1}$ and let $\omega \in C_{\Gh}^R(g) \setminus C_{\G}(g)$. Then $\mathbb{R} \rtimes_{\pi} \Z$ acts on $L_g \G$ by
\begin{equation*}
\label{tnact}
((t,n) \cdot \gamma)(s)
=
\omega^{-n} \gamma(\epsilon(n) (s+t)) \omega^{n}.
\end{equation*}
Define a group homomorphism $\Phi: L_g \G \rtimes_{\pi} (\mathbb{R} \rtimes_{\pi} \Z) \rightarrow \tilde{L}_{P_g}^{R \mhyphen \ext} \Gh$ by
\[
\Phi(\gamma, (t,n))(s)
=
\left((\epsilon(n)(s+t),n),\gamma(\epsilon(n) s) \omega^n x \omega^{-n}) \right).
\]
Since $\Phi(\omega^{-2},(0,2))= ((0,2),R_{\omega^{-2}})$, we find that $\Phi$ descends to an isomorphism
\[
\left( L_g \G \rtimes (\mathbb{R} \rtimes_{\pi} \Z) \right) \slash \langle (g,(1,0)),  (\omega^{-2},(0,2)) \rangle \xrightarrow[]{\sim} L_{P_g}^{R \mhyphen \ext} \Gh
\]
which restricts to an isomorphism
\[
L_g^R \Gh :=
\left( L_g \G \rtimes_{\pi} \Z \right)
\slash \langle (\omega^{-2},2) \rangle \xrightarrow[]{\sim}
L_{P_g}^R \Gh.
\]

In this paper, we are concerned with subgroups of constant bundle morphisms, which we adorn with a subscript $c$. There are isomorphisms
\begin{equation}
\label{constLoopGroupIden}
L_{c,P_g} \G
\simeq
C_{\G}(g)
\end{equation}
and
\[
L^{\ext}_{c,P_g} \G
\simeq
\Lambda_{\G}(g) := (\mathbb{R} \times C_{\G}(g) ) \slash \langle (-1,g) \rangle.
\]
See \cite[\S 2.3]{freed2013c}. The isomorphism \eqref{constLoopGroupIden} lifts to a $\Z_2$-graded group isomorphism
\[
L_{c,P_g}^R \Gh
\simeq
C^R_{\Gh}(g).
\]
The element $(-1,g) \in \mathbb{R} \rtimes_{\pi} C_{\Gh}^R(g)$ is Real central and so generates a normal subgroup isomorphic to $\Z$. This leads to an isomorphism of $\Z_2$-graded groups
\[
L_{c,P_g}^{R \mhyphen \ext} \Gh
\simeq
\Lambda^R_{\Gh}(g)
:=
\left( \mathbb{R} \rtimes_{\pi} C_{\Gh}^R(g) \right) \slash \langle (-1,g) \rangle.
\]
We refer to $\Lambda_{\G}(g)$ and $\Lambda^R_{\Gh}(g)$ as the \emph{enhanced centralizer} and \emph{Real enhanced centralizer} of $g$, respectively.

\begin{Ex}
\label{ex:enhandCentIdent}
When $g=e$, the left column of diagram \eqref{diag:enhancedTwistStabCpt} splits, $L_e^{\ext} \G \simeq L \G \rtimes \mathbb{T}$. When $\Gh$ is split, the choice of $\omega \in \Gh \setminus \G$ which satisfies $\omega^2=e$ splits the middle column, $L_e^{R \mhyphen \ext} \Gh \simeq L \G \rtimes \mathsf{O}_2$. Here the identity component of $\mathsf{O}_2$ acts on $L\G$ by loop rotation and the non-identity component by loop reflection and conjugation of $\G$ by $\omega$. Without any splitting assumptions, we have
\[
L_e^{R \mhyphen \ext} \Gh
\simeq
\left( L \G \rtimes (\mathbb{T} \rtimes_{\pi} \Z) \right) \slash \langle (\omega^{-2},(0,2)) \rangle.
\]

At the level of constant loops, there are isomorphisms $\Lambda_{\G}(e) \simeq \mathbb{T} \times \G$ and, without any splitting assumptions, we have $\Lambda_{\Gh}^R(e) \simeq \mathbb{T} \rtimes_{\pi} \Gh$.
\end{Ex}

If $g \in \G$ has finite order, then there is a commutative diagram of exact sequences
\begin{equation}
\label{diag:finiteOrderRealConstLoop}
\begin{tikzpicture}[baseline= (a).base]
\node[scale=1.0] (a) at (0,0){
\begin{tikzcd}
1  \arrow{r} & \Z \arrow{r}[above]{1 \mapsto (-1,g)} \arrow[twoheadrightarrow]{d} & [40pt] \mathbb{R} \rtimes_{\pi} C_{\Gh}^R(g) \arrow{r} \arrow{d}[right]{(t,\omega) \mapsto (\left[\frac{t}{\vert g \vert} \right],\omega)} & [40pt] \Lambda_{\Gh}^R(g) \arrow{r} \arrow[equal]{d} & 1 \\
1  \arrow{r} & \Z_{\vert g \vert} \arrow{r}[below]{[1] \mapsto (-\left[\frac{1}{\vert g \vert} \right],g)} & \mathbb{T} \rtimes_{\pi} C_{\Gh}^R(g) \arrow{r}[below]{p} & \Lambda_{\Gh}^R(g) \arrow{r} & 1,
\end{tikzcd}
};
\end{tikzpicture}
\end{equation}
where $p([t],\omega) = [(\vert g \vert t, \omega)]$ and we view $\mathbb{T}$ as $\mathbb{R} \slash \Z$.

\subsection{Twisted enhanced (Real) centralizer subgroups}
\label{sec:twEnCent}

Keep the notation of Sections \ref{sec:enCent} and \ref{sec:enRealCent} and assume in addition that $\G$ and $\Gh$ are finite. Fixing representatives of the conjugacy classes of $\G$ induces an equivalence
\begin{equation}
\label{eq:loopGrpDecomp}
\mathcal{L} B \G
\simeq
\bigsqcup_{g \in \pi_0(\G \git \G)} B C_{\G}(g).
\end{equation}
Let $\alpha \in Z^3(B \G)$. Denote by $\uptau(\alpha)_g \in Z^2(B C_{\G}(g))$ the restriction of $\uptau(\alpha)$ to the $g$\textsuperscript{th} component of \eqref{eq:loopGrpDecomp} and\footnote{In the notation of Section \ref{sec:RealRepThy}, ${^{\alpha}}C_{\G}(g)$ would be denoted ${^{\uptau(\alpha)_g}}C_{\G}(g)$, but this is too cumbersome.} ${^{\alpha}}C_{\G}(g)$ the associated central extension of $C_{\G}(g)$.

\begin{Def}[{\cite[\S 2.3]{freed2013c}}]
The \emph{$\alpha$-twisted enhanced centralizer} of $g \in \G$ is
\[
{^{\alpha}}\Lambda_{\G}(g)
=
(\mathbb{R} \times {^{\alpha}} C_{\G}(g)) \slash \langle (-1,(g,1_{\mathbb{T}})) \rangle.
\]
\end{Def}

There is a commutative diagram of central extensions
\begin{equation*}
\label{diag:enhancedCentExt}
\begin{tikzpicture}[baseline= (a).base]
\node[scale=1.0] (a) at (0,0){
\begin{tikzcd}[column sep=4em]
{} & {} & 1 \arrow{d} & 1 \arrow{d} & {} \\
{} & {} & \Z \arrow[r,equal]  \arrow{d}[left]{1 \mapsto (-1,(g,z))} & \Z \arrow{d} & {} \\
1 \arrow{r} & \mathbb{T} \arrow{r}  \arrow[d,equal] & \mathbb{R} \times {^{\alpha}}C_{\G}(g) \arrow{r} \arrow{d} &  \mathbb{R} \times C_{\G}(g) \arrow{r} \arrow{d} & 1 \\
1 \arrow{r} & \mathbb{T} \arrow{r} & {^{\alpha}}\Lambda_{\G}(g) \arrow{r}[below]{(t,(h,z)) \mapsto (t,h)} \arrow{d} & \Lambda_{\G}(g) \arrow{r} \arrow{d} & 1 \\
{} & {} & 1 & 1 & {}
\end{tikzcd}
};
\end{tikzpicture}.
\end{equation*}
The projection $\mathbb{R} \times C_{\G}(g) \rightarrow \mathbb{R}$ induces a homomorphism $\Lambda_{\G}(g) \rightarrow \mathbb{T}$ which fits into the exact commutative diagram
\begin{equation}
\label{diag:enhancedTwistStab}
\begin{tikzpicture}[baseline= (a).base]
\node[scale=1.0] (a) at (0,0){
\begin{tikzcd}
{} & {} & 1 \arrow{d} & 1 \arrow{d} & {} \\
1 \arrow{r} & \mathbb{T} \arrow{r} \arrow[d,equal] & {^{\alpha}}C_{\G}(g) \arrow{r} \arrow{d} & C_{\G}(g) \arrow{r} \arrow{d} & 1 \\
1 \arrow{r} & \mathbb{T} \arrow{r} & {^{\alpha}}\Lambda_{\G}(g) \arrow{r} \arrow{d} & \Lambda_{\G}(g) \arrow{r} \arrow{d} & 1 \\
{} & {} & \mathbb{T} \arrow[r,equal] \arrow{d} & \mathbb{T} \arrow{d} & {} \\
{} & {} & 1 & 1 & {}
\end{tikzcd}
};
\end{tikzpicture}.
\end{equation}

Let now $\Gh$ be a $\Z_2$-graded finite group. Fixing representatives of the Real conjugacy classes of $\G$ which agree with the chosen representatives of the conjugacy classes of $\G$ under the bijection \eqref{eq:RealConjClasses} induces an equivalence
\begin{equation}
\label{eq:loopGrpDecompUnori}
\mathcal{L}_{\pi}^{\refl} B \Gh
\simeq
\bigsqcup_{g \in \pi_0(\G \git_R \Gh)} B C_{\Gh}^R(g).
\end{equation}
The choice of an element $\omega \in \Gh \setminus \G$ refines the equivalence \eqref{eq:loopGrpDecomp} to
\[
\mathcal{L} B \G
\simeq
\bigsqcup_{g \in \pi_0(\G \git \G)_{-1}} B C_{\G}(g) \sqcup \bigsqcup_{\{g,\omega g^{-1} \omega^{-1}\} \subset \pi_0(\G \git \G)_{+1}} B C_{\G}(g) \sqcup BC_{\G}(\omega g^{-1} \omega^{-1}),
\]
thereby making explicit the double cover $\mathcal{L} B \G \rightarrow \mathcal{L}_{\pi}^{\refl} B \Gh$.

Let $\hat{\alpha} \in Z^3(B \Gh)$ with restriction $\alpha \in Z^3(B \G)$. Denote by $\tilde{\uptau}_{\pi}^{\refl}(\hat{\alpha})_g \in Z^{2 + \pi}(B C_{\Gh}^R(g))$ the restriction of $\tilde{\uptau}_{\pi}^{\refl}(\hat{\alpha})$ to the $g$\textsuperscript{th} component of \eqref{eq:loopGrpDecompUnori}. Finally, let ${^{\hat{\alpha}}}C_{\Gh}^R(g)$ be the associated (Real, if $g \in \pi_0(\G \git \G)_{-1}$,) central extension of $C_{\Gh}^R(g)$.

The next result uses that the isomorphic class of the central extension ${^{\alpha}}\Lambda_{\G}(g)$ is unchanged if the element $(-1,(g,1_{\mathbb{T}}))$ used in its definition is replaced by $(-1,(g,z))$.

\begin{Lem}
\label{lem:LambdaIsoNonFixConj}
Let $g \in \pi_0(\G \git \G)_{+1}$ and $\omega \in \Gh \setminus \G$. There is commutative diagram of group homomorphisms
\[
\begin{tikzpicture}[baseline= (a).base]
\node[scale=1.0] (a) at (0,0){
\begin{tikzcd}
{^{\alpha}}\Lambda_{\G}(g) \arrow{d} \arrow{r}[above]{i_g} & {^{\alpha}}\Lambda_{\G}(\omega g^{-1} \omega^{-1}) \arrow{d} \\
\mathbb{T} \arrow{r}[below]{(-)^{-1}} & \mathbb{T}.
\end{tikzcd}
};
\end{tikzpicture}
\]
Moreover, $i_g$ is an isomorphism of central extensions which restricts to inversion on the central tori.
\end{Lem}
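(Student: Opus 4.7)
The strategy is to lift the map $(t, h) \mapsto (-t, \omega h \omega^{-1})$---which combines time reversal on $\mathbb{R}$ with $\omega$-conjugation on $\G$---from the enhanced centralizers to the twisted enhanced centralizers, with inversion of the central $\mathbb{T}$ arising from inverting the $z$-coordinate. The key input is that $\uptau(\alpha)_g$ and the $\omega$-conjugate of $\uptau(\alpha)_{\omega g^{-1} \omega^{-1}}$ are mutually inverse cocycles, which makes inverting $z$ compatible with the multiplication law in ${^{\alpha}}C_{\G}(\omega g^{-1}\omega^{-1})$.

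First, on the untwisted level, define $\tilde{i}_g : \mathbb{R} \times C_{\G}(g) \to \mathbb{R} \times C_{\G}(\omega g^{-1} \omega^{-1})$ by $(t, h) \mapsto (-t, \omega h \omega^{-1})$. Since $\tilde{i}_g(-1, g) = (1, \omega g \omega^{-1}) = -(-1, \omega g^{-1} \omega^{-1})$ in the target, the map $\tilde{i}_g$ descends to a group isomorphism $\Lambda_{\G}(g) \to \Lambda_{\G}(\omega g^{-1} \omega^{-1})$, and its postcomposition with the rotation projection to $\mathbb{T}$ is $t \mapsto -t$, establishing commutativity of the square asserted in the lemma.

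Second, I would establish the cocycle identity
\[
\uptau(\alpha)_g(h_1, h_2) \cdot \uptau(\alpha)_{\omega g^{-1} \omega^{-1}}(\omega h_1 \omega^{-1}, \omega h_2 \omega^{-1}) = 1, \qquad h_1, h_2 \in C_{\G}(g).
\]
This follows from commutativity of \eqref{eq:transCompat} restricted to the non-fixed component indexed by $[g] \in \pi_0(\G \git_R \Gh)$: the $\Z_2$-action on $\mathcal{L} B \G$ swaps the two sheets $B C_{\G}(g)$ and $B C_{\G}(\omega g^{-1} \omega^{-1})$ of the double cover and acts by inversion on the local system $\mathbb{T}_{\pi}$ of coefficients on $\mathcal{L}_{\pi}^{\refl} B \Gh$, so the two pullback expressions for $\tilde{\uptau}_{\pi}^{\refl}(\hat{\alpha})_g$ as an ordinary $\mathbb{T}$-valued cocycle force the above inversion relation between $\uptau(\alpha)_g$ and the $\omega$-conjugate of $\uptau(\alpha)_{\omega g^{-1} \omega^{-1}}$. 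With this identity in hand, the formula $(t, (h, z)) \mapsto (-t, (\omega h \omega^{-1}, z^{-1} \eta_g(h)))$ defines a group homomorphism $\mathbb{R} \times {^{\alpha}}C_{\G}(g) \to \mathbb{R} \times {^{\alpha}}C_{\G}(\omega g^{-1} \omega^{-1})$ precisely when $\eta_g \in \Hom(C_{\G}(g), \mathbb{T})$; descent across $\langle (-1, (g, 1_{\mathbb{T}})) \rangle$ pins down $\eta_g(g)$ as a specific $|g|$-th root of unity determined by $\uptau(\alpha)_{\omega g^{-1} \omega^{-1}}(\omega g^{-1} \omega^{-1}, \omega g \omega^{-1})^{-1}$, and such a character on $\langle g \rangle$ exists and extends to $C_{\G}(g)$. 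The resulting $i_g$ restricts to $z \mapsto z^{-1}$ on the central $\mathbb{T}$ by construction, completing the proof.

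The main obstacle is Step~2: although the inversion relation is dictated by the inversion action of $\Z_2$ on $\mathbb{T}_{\pi}$, one must carefully unwind the sign conventions in the $\mathbb{T}_{\pi}$-twisted reflection transgression and its restriction to the non-fixed orbit to make this explicit. Once that identity is in place, the homomorphism and descent checks in Step~3 are mechanical consequences of the multiplication rules in ${^{\alpha}}C_{\G}(g)$ and ${^{\alpha}}\Lambda_{\G}(g)$.
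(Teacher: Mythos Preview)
Your overall strategy matches the paper's: lift the untwisted map $(t,h)\mapsto(-t,\omega h\omega^{-1})$ to the twisted centralizers by inverting $z$ and inserting a correction $1$-cochain, then check descent. The gap is in Step~2.

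The identity
\[
\uptau(\alpha)_g([h_2\mid h_1])\cdot\uptau(\alpha)_{\omega g^{-1}\omega^{-1}}([\omega h_2\omega^{-1}\mid\omega h_1\omega^{-1}])=1
\]
does not hold on the nose; it holds only up to a coboundary. Your heuristic via the $\mathbb{T}_\pi$-equivariance of $\tilde{\uptau}_{\pi}^{\refl}(\hat\alpha)$ is morally right, but the deck transformation of $\mathcal{L}B\G$ is only a categorical involution (Lemma~\ref{lem:invTorsor}): identifying the two sheets over $[g]\in\pi_0(\G\git_R\Gh)$ uses the morphism $\omega$ in $\G\git_R\Gh$, and the values of $\tilde{\uptau}_{\pi}^{\refl}(\hat\alpha)$ on $2$-simplices containing $\omega$ produce a nontrivial coboundary. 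The paper makes this explicit: with
\[
f([h])=\frac{\tilde{\uptau}^{\refl}_{\pi}(\hat{\alpha})([\omega\mid h]g)}{\tilde{\uptau}^{\refl}_{\pi}(\hat{\alpha})([\omega h\omega^{-1}\mid\omega]g)},
\]
the twisted $2$-cocycle condition on $\tilde{\uptau}_{\pi}^{\refl}(\hat\alpha)$ yields
\[
\uptau(\alpha)_{\omega g^{-1}\omega^{-1}}([\omega h_2\omega^{-1}\mid\omega h_1\omega^{-1}])\,\uptau(\alpha)_g([h_2\mid h_1])=\frac{f([h_2h_1])}{f([h_2])f([h_1])},
\]
and then $i_g(t,(h,z))=(-t,(\omega h\omega^{-1},f([h])\,z^{-1}))$ is the desired homomorphism.

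This also undermines your Step~3. A direct check shows that $(t,(h,z))\mapsto(-t,(\omega h\omega^{-1},\eta_g(h)z^{-1}))$ is a homomorphism precisely when $d\eta_g$ equals the cocycle product above, so $\eta_g$ must be a general $1$-cochain, not a character; your claim ``precisely when $\eta_g\in\Hom(C_\G(g),\mathbb{T})$'' presupposes the false on-the-nose identity. Consequently the plan to pin down $\eta_g(g)$ and extend a character from $\langle g\rangle$ to $C_\G(g)$ is both unnecessary and unjustified (such extensions need not exist). In the paper, descent is handled instead by the remark preceding the lemma: replacing $(-1,(g,1_\mathbb{T}))$ by $(-1,(g,z))$ does not change the isomorphism class of ${^\alpha}\Lambda_\G(\omega g^{-1}\omega^{-1})$, so one only needs $i_g(-1,(g,1_\mathbb{T}))^{-1}$ to have the form $(-1,(\omega g^{-1}\omega^{-1},\ast))$, which is immediate from the formula for $i_g$.
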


\begin{proof}
Define $f \in C^1(B C_{\G}(g))$ by
\[
f([h])
=
\frac{\tilde{\uptau}^{\refl}_{\pi}(\hat{\alpha})([\omega \vert h]g)}{\tilde{\uptau}^{\refl}_{\pi}(\hat{\alpha})([\omega h \omega^{-1} \vert \omega]g)},
\qquad
h \in C_{\G}(g).
\]
Since $\hat{\alpha}$ is closed, $\tilde{\uptau}_{\pi}^{\refl}(\hat{\alpha})$ is a twisted $2$-cocycle, from which it follows that
\[
\uptau(\alpha)([\omega h_2 \omega^{-1} \vert \omega h_1 \omega^{-1}] g) \uptau(\alpha)([h_2 \vert h_1] g)
=
\frac{f([h_2h_1])}{f([h_2])f([h_1])}.
\]
This equality implies that the map
\begin{eqnarray*}
i_g: \mathbb{R} \times {^{\alpha}}C_{\G}(g) & \rightarrow & \mathbb{R} \times {^{\alpha}}C_{\G}(\omega g^{-1} \omega^{-1}) \\
(t,(h,z)) &\mapsto& (-t,(\omega h \omega^{-1},f([h]) z^{-1}))
\end{eqnarray*}
is a group isomorphism.
Since
\[
i_g (-1,(g,1_{\mathbb{T}}))^{-1}
=
(-1,(\omega g^{-1} \omega^{-1},\uptau(\alpha)([\omega g \omega^{-1} \vert \omega g^{-1} \omega^{-1}]g) f([g])^{-1})),
\]
the map $i_g$ descends to an isomorphism of central extensions from ${^{\alpha}}\Lambda_{\G}(g)$ to
\[
\mathbb{R} \times {^{\alpha}}C_{\G}(\omega g^{-1} \omega^{-1})\slash \langle (-1,(\omega g^{-1} \omega^{-1},\uptau(\alpha)([\omega g \omega^{-1} \vert \omega g^{-1} \omega^{-1}]g) f([g])^{-1})) \rangle,
\]
the codomain of which is canonically isomorphic to ${^{\alpha}}\Lambda_{\G}(\omega g^{-1} \omega^{-1})$.
\end{proof}


\begin{Lem}
\label{lem:centralElement}
The element $(g,1_{\mathbb{T}}) \in {^{\hat{\alpha}}}C_{\Gh}^R(g)$ is Real central.
\end{Lem}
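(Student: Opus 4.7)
Real centrality of $(g, 1_{\mathbb{T}})$ amounts to the identity
\[
\tilde{\omega} \, (g, 1_{\mathbb{T}})^{\pi(\tilde{\omega})} \, \tilde{\omega}^{-1} = (g, 1_{\mathbb{T}})
\]
in ${^{\hat{\alpha}}}C_{\Gh}^R(g)$ for every lift $\tilde{\omega}$ of $\omega \in C_{\Gh}^R(g)$. The underlying group element of the right hand side is automatically $g$: when $\pi(\omega) = +1$ this follows from centrality of $g$ in $C_{\G}(g)$, and when $\pi(\omega) = -1$ from the defining Real centralizer identity $\omega g^{-1} \omega^{-1} = g$. It therefore suffices to track the $\mathbb{T}$-component on a generating set of $C_{\Gh}^R(g)$, and I would split into the two cases $\pi(\omega) = \pm 1$.

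For $\pi(\omega) = +1$, the assertion is that $(g, 1_{\mathbb{T}})$ is central in the ordinary central extension ${^{\alpha}}C_{\G}(g)$, which amounts to $\uptau(\alpha)([h|g]g) = \uptau(\alpha)([g|h]g)$ for all $h \in C_{\G}(g)$. Expanding each side with the explicit formula for $\uptau$ from Section~\ref{sec:transMaps}, the commutator $[h,g] = e$ forces every conjugate $\gamma_i$ appearing in the formula to equal $g$. The three-term product collapses on each side, and both expressions reduce to the common value $\alpha([g|h|g])$.

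For $\pi(\omega) = -1$, I would pick a lift $(\omega, 1_{\mathbb{T}}) \in {^{\hat{\alpha}}}C_{\Gh}^R(g)$ of $\omega \in C_{\Gh}^R(g) \setminus C_{\G}(g)$ and expand the Real conjugate $(\omega, 1_{\mathbb{T}}) (g, 1_{\mathbb{T}})^{-1} (\omega, 1_{\mathbb{T}})^{-1}$ using the twisted multiplication
\[
(\omega_2, z_2)(\omega_1, z_1) = \bigl(\omega_2 \omega_1, \, \tilde{\uptau}_{\pi}^{\refl}(\hat{\alpha})([\omega_2|\omega_1]g) \, z_2 z_1^{\pi(\omega_2)}\bigr)
\]
determined by $\tilde{\uptau}_{\pi}^{\refl}(\hat{\alpha})_g$. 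The resulting $\mathbb{T}$-component is a product of four values of $\tilde{\uptau}_{\pi}^{\refl}(\hat{\alpha})$ at $2$-simplices in $\{\omega, \omega^{-1}, g, g^{-1}\}$, namely
\[
\tilde{\uptau}_{\pi}^{\refl}(\hat{\alpha})([\omega g^{-1} | \omega^{-1}]g) \cdot \tilde{\uptau}_{\pi}^{\refl}(\hat{\alpha})([\omega | g^{-1}]g) \cdot \tilde{\uptau}_{\pi}^{\refl}(\hat{\alpha})([g | g^{-1}]g) \cdot \tilde{\uptau}_{\pi}^{\refl}(\hat{\alpha})([\omega | \omega^{-1}]g)^{-1}.
\]
Showing this product equals $1$ is the main obstacle: one invokes the twisted $2$-cocycle identity satisfied by $\tilde{\uptau}_{\pi}^{\refl}(\hat{\alpha})$ (a consequence of closedness of $\hat{\alpha}$) and rewrites arguments using $\omega g^{\pm 1} \omega^{-1} = g^{\mp 1}$ until the four factors telescope. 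No cohomological input beyond the $3$-cocycle condition on $\hat{\alpha}$ is required; only the combinatorial bookkeeping in the formula of \cite[Theorem~2.7]{noohiyoung2022} needs to be handled carefully.
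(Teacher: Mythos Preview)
Your proposal is correct and rests on the same underlying idea as the paper---namely, that closedness of $\hat{\alpha}$ makes $\tilde{\uptau}_{\pi}^{\refl}(\hat{\alpha})_g$ a twisted $2$-cocycle, and this cocycle identity forces the $\mathbb{T}$-component of the relevant conjugate to collapse. The organisation differs in two minor ways worth noting.

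First, the paper does not split into the cases $\pi(\varsigma)=\pm 1$ at the outset. Instead it computes the ordinary conjugate $(\varsigma,z)(g,1_{\mathbb{T}})(\varsigma,z)^{-1}$ for arbitrary $(\varsigma,z)$, obtaining a three-term ratio of values of $\tilde{\uptau}_{\pi}^{\refl}(\hat{\alpha})$; one application of the twisted $2$-cocycle condition reduces this to a two-term ratio which is immediately seen to equal $1$ when $\pi(\varsigma)=1$ and $\tilde{\uptau}_{\pi}^{\refl}(\hat{\alpha})([g\vert g^{-1}]g)^{-1}$ when $\pi(\varsigma)=-1$, i.e.\ the $\mathbb{T}$-component of $(g,1_{\mathbb{T}})^{\pi(\varsigma)}$. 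Your reformulation $\tilde\omega(g,1_{\mathbb{T}})^{\pi(\tilde\omega)}\tilde\omega^{-1}=(g,1_{\mathbb{T}})$ is equivalent but forces you to compute $(g,1_{\mathbb{T}})^{-1}$ first, which is why your odd case produces four factors rather than three.

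Second, in the even case you descend to the explicit Willerton formula for $\uptau(\alpha)$ in terms of $\alpha$ and verify $\uptau(\alpha)([h\vert g]g)=\uptau(\alpha)([g\vert h]g)=\alpha([g\vert h\vert g])$ by direct substitution. This is correct but more than is needed: the ordinary $2$-cocycle condition on $\uptau(\alpha)_g$ already gives it, and the paper's uniform argument never unpacks $\uptau$ or $\tilde{\uptau}_{\pi}^{\refl}$ at all. Your approach has the virtue of being completely explicit in the even case; the paper's has the virtue of treating both parities at once with a single cocycle manipulation.
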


\begin{proof}
Let $(\varsigma,z) \in {^{\hat{\alpha}}}C_{\Gh}^R(g)$. We compute
\begin{equation}
\label{eq:RealConjCalc}
(\varsigma,z)  (g,1_{\mathbb{T}}) (\varsigma,z)^{-1}
=
(g^{\pi(\varsigma)}, \frac{\tilde{\uptau}_{\pi}^{\refl}(\hat{\alpha})([\varsigma \vert g] g) \tilde{\uptau}_{\pi}^{\refl}(\hat{\alpha})([\varsigma g \vert \varsigma^{-1}]g)}{\tilde{\uptau}_{\pi}^{\refl}(\hat{\alpha})([\varsigma \vert \varsigma^{-1}]g)}).
\end{equation}
Using the twisted $2$-cocycle condition on $\tilde{\uptau}_{\pi}^{\refl}(\hat{\alpha})_g$, the second component of \eqref{eq:RealConjCalc} is
\[
\frac{\tilde{\uptau}_{\pi}^{\refl}(\hat{\alpha})([g \vert \varsigma^{-1}]g)^{\pi(\varsigma)}}{\tilde{\uptau}_{\pi}^{\refl}(\hat{\alpha})([\varsigma^{-1} \vert g^{\pi(\varsigma)}]g)^{\pi(\varsigma)}}
=
\begin{cases}
1 & \mbox{ if } \pi(\varsigma) =1, \\
\tilde{\uptau}_{\pi}^{\refl}(\hat{\alpha})([g \vert g^{-1}]g)^{-1} & \mbox{ if } \pi(\varsigma) =-1.
\end{cases}
\]
It follows that $(\varsigma,z) (g,1_{\mathbb{T}}) (\varsigma,z)^{-1}
=(g,1_{\mathbb{T}})^{\pi(\varsigma)}$.
\end{proof}

By Lemma \ref{lem:centralElement}, the element $(-1,(g,1_{\mathbb{T}})) \in \mathbb{R} \rtimes_{\pi} {^{\hat{\alpha}}}C_{\Gh}^R(g)$ is Real central.

\begin{Def}
The \emph{$\hat{\alpha}$-twisted enhanced Real stabilizer} of $g \in \G$ is
\[
{^{\hat{\alpha}}}\Lambda_{\Gh}^R(g)
=
\left(
\mathbb{R} \rtimes_{\pi} {^{\hat{\alpha}}}C_{\Gh}^R(g)
\right) \slash
\langle (-1,(g,1_{\mathbb{T}})) \rangle.
\]
\end{Def}

The group ${^{\hat{\alpha}}}\Lambda_{\Gh}^R(g)$ is $\Z_2$-graded with ungraded group ${^{\alpha}}\Lambda_{\G}(g)$. There is an exact commutative diagram
\[
\begin{tikzpicture}[baseline= (a).base]
\node[scale=1.0] (a) at (0,0){
\begin{tikzcd}
{} & {} & 1 \arrow{d} & 1 \arrow{d} & {} \\
{} & {} & \Z \arrow[r,equal]  \arrow{d} & \Z \arrow{d} & {} \\
1 \arrow{r} & \mathbb{T} \arrow{r}  \arrow[d,equal] & \mathbb{R} \rtimes_{\pi} {^{\hat{\alpha}}}C_{\Gh}^R(g) \arrow{r} \arrow{d} &  \mathbb{R} \rtimes_{\pi} C_{\Gh}^R(g) \arrow{r} \arrow{d} & 1 \\
1 \arrow{r} & \mathbb{T} \arrow{r} & {^{\hat{\alpha}}}\Lambda^R_{\Gh}(g) \arrow{r} \arrow{d} & \Lambda^R_{\Gh}(g) \arrow{r} \arrow{d} & 1 \\
{} & {} & 1 & 1 & {} \\
\end{tikzcd}
};
\end{tikzpicture}
\]
whose rows and columns (by Lemma \ref{lem:centralElement}) are Real central extensions.

If $C_{\Gh}^R(g)$ is non-trivially graded, then the homomorphism ${^{\hat{\alpha}}}C^R_{\Gh}(g) \rightarrow C^R_{\Gh}(g)$ induces a homomorphism ${^{\hat{\alpha}}}\Lambda^R_{\Gh}(g) \rightarrow \Lambda^R_{\Gh}(g)$ which fits into the exact commutative diagram
\begin{equation*}
\label{diag:RealEnhancedTwistStabGrade}
\begin{tikzpicture}[baseline= (a).base]
\node[scale=1.0] (a) at (0,0){
\begin{tikzcd}
{} & {} & 1 \arrow{d} & 1 \arrow{d} & {} \\
1 \arrow{r} & \mathbb{T} \arrow{r} \arrow[d,equal] & {^{\alpha}}\Lambda_{\G}(g) \arrow{r} \arrow{d} & \Lambda_{\G}(g) \arrow{r} \arrow{d} & 1 \\
1 \arrow{r} & \mathbb{T} \arrow{r} & {^{\hat{\alpha}}}\Lambda^R_{\Gh}(g) \arrow{r} \arrow{d} & \Lambda^R_{\Gh}(g) \arrow{r} \arrow{d} & 1 \\
{} & {} & \Z_2 \arrow[r,equal] \arrow{d} & \Z_2 \arrow{d} & {} \\
{} & {} & 1 & 1 & {}
\end{tikzcd}
};
\end{tikzpicture}.
\end{equation*}
In view of the exact sequence \eqref{eq:univCovO2}, the homomorphism
\[
\mathbb{R} \rtimes_{\pi} {^{\hat{\alpha}}}C_{\Gh}^R(g) \rightarrow \mathbb{R} \rtimes_{\pi} \Z_2,
\qquad
(t,(\varsigma,z)) \mapsto (t, \pi(\varsigma))
\]
descends to a homomorphism ${^{\hat{\alpha}}}\Lambda_{\Gh}^R(g) \rightarrow \mathsf{O}_2$ which fits into the exact commutative diagram
\begin{equation}
\label{diag:RealEnhancedTwistStab}
\begin{tikzpicture}[baseline= (a).base]
\node[scale=1.0] (a) at (0,0){
\begin{tikzcd}
{} & 1 \arrow{d} & 1 \arrow{d} & {} & {} \\
{} & {^{\alpha}}C_{\G}(g) \arrow{d} \arrow[r,equal] & {^{\alpha}}C_{\G}(g) \arrow{d} & {} & {} \\
1 \arrow{r} & {^{\alpha}}\Lambda_{\G}(g) \arrow{r} \arrow{d} & {^{\hat{\alpha}}}\Lambda^R_{\Gh}(g) \arrow{r} \arrow{d} & \Z_2 \arrow{r} \arrow[d,equal] & 1\\
1 \arrow{r} & \mathbb{T} \arrow{r} \arrow{d} & \mathsf{O}_2 \arrow{r} \arrow{d} & \Z_2 \arrow{r} & 1 \\
{} & 1 & 1 & {} & {}
\end{tikzcd}
};
\end{tikzpicture}
.
\end{equation}

\subsection{Enhanced loop groupoids}
\label{sec:enhLoopGrpd}

Let a compact Lie group $\G$ act on a manifold $X$. We recall various notions of the loop groupoid of $X \git \G$ \cite{lupercio2002,ganter2013,huan2018}. The loop groupoids of primary interest are extensions of the inertia groupoid
\[
\mathcal{L} \left( X \git \G \right)
\simeq
\bigsqcup_{g \in \pi_0 (\G \git \G)} X^g \git C_{\G}(g).
\]

Let $\Loop_1^{\ext}(X\git \G)$ be the category whose objects are diagrams $P=(S^1 \xleftarrow[]{q} P \xrightarrow[]{f} X)$ with $q$ a $\G$-bundle and $f$ a $\G$-equivariant map and whose morphisms $(t,\phi) : P \rightarrow P^{\prime}$ are commutative diagrams
\[
\begin{tikzpicture}[baseline= (a).base]
\node[scale=1.0] (a) at (0,0){
\begin{tikzcd}[column sep=1em,row sep=1.5em]
& X & \\
P \arrow{rr}[above]{\phi} \arrow{ur}[above left]{f} \arrow{d}[left]{q}& & P^{\prime} \arrow{ul}[above right]{f^{\prime}} \arrow{d}[right]{q^{\prime}} \\
S^1 \arrow{rr}[below]{t} & & S^1.
\end{tikzcd}
};
\end{tikzpicture}
\]
The subgroupoid of morphisms with $t=0$ is the category $\textnormal{Bibun}(S^1, X \git \G)$ of bibundles, as defined in \cite[\S 3]{lerman2010}. In particular, $\Loop_1^{\ext}(X\git \G)$ is an extension of $\textnormal{Bibun}(S^1, X \git \G)$ by the groupoid of rotations $B \mathbb{T}$.

Given $g \in \G$, let
\[
\mathcal{P}_g X
=
\{
f \in C^{\infty}(\mathbb{R}, X) \mid f(s+1)=f(s)g \mbox{ for all } s \in \mathbb{R}
\}.
\]
Let $\Loop^{\ext}_2(X \git \G)$ be the groupoid with objects $\bigsqcup_{g \in \G} \mathcal{P}_g X$ and morphisms $f_1 \rightarrow f_2$ given by
\[
\{ (t,\phi) \in \mathbb{R} \times C^{\infty}(\mathbb{R},\G) \mid \phi(s) g_2 = g_1 \phi(s+1) \mbox{ and }
f_2(s) = f_1(s-t) \phi(s-t) \mbox{ for all } s\in \mathbb{R} \},
\]
where $f_i \in \mathcal{P}_{g_i}(X)$.
The group $L_g^{\ext} \G$ acts on $\mathcal{P}_g X$ (see equation \eqref{eq:twistLoopGroupAct} below) and the groupoid
\begin{equation}
\label{eq:skelLoop2}
\bigsqcup_{g \in \pi_0(\G \git \G)} \mathcal{P}_g X \git L_g^{\ext} \G
\end{equation}
is a skeleton of $\Loop_2^{\ext}(X \git \G)$ \cite[Proposition 2.11]{huan2018}.

Let $\G^{\tor} \subset \G$ be the subset of torsion elements and $\Loop_2^{\ext, \tor}(X \git \G) \subset \Loop_2^{\ext}(X \git \G)$ the full subgroupoid on objects $\bigsqcup_{g \in \G^{\tor}} \mathcal{P}_g X$. Let $L_{\orb}(X \git \G) \subset \Loop^{\ext, \tor}_2(X \git \G)$ be the subgroupoid of morphisms of the form $(t,\phi)$ with $\phi$ constant. We have
\[
\Hom_{L_{\orb}(X \git \G)}(f_1,f_2) = \Lambda_{\G}(g_1,g_2),
\]
where $T_{\G}(g_1,g_2) = \{h \in \G \mid g_1 h=hg_2 \}$ and $\Lambda_{\G}(g_1,g_2)$ is the quotient of $\mathbb{R} \times T_{\G}(g_1,g_2)$ by the equivalence relation generated by $(t,h) \sim (t-1,g_1 h)$.

Finally, let $\Lambda \left( X \git \G \right)$ be the full subgroupoid of $L_{\orb}(X \git \G)$ of constant loops. There is an equivalence
\begin{equation}
\label{eq:conjClassDecompLambda}
\Lambda \left( X \git \G \right)
\simeq
\bigsqcup_{g \in \pi_0 (\G^{\tor} \git \G)} X^g \git \Lambda_{\G}(g).
\end{equation}
In particular, $\Lambda \left( X \git \G \right)$ is a local quotient groupoid. The torsion inertia groupoid
\[
\mathcal{L}^{\tor} (X \git \G)
=
\bigsqcup_{g \in \pi_0(\G^{\tor} \git \G)} X^g \git C_{\G}(g). 
\]
is the subgroupoid of $\Lambda \left( X \git \G \right)$ on morphisms which do not involve loop rotation.

In \cite[Lemma 2.10]{huan2018} it is proved that $\Loop^{\ext}_1(X \git \G)$ is equivalent to a full subgroupoid of $\Loop^{\ext}_2(X \git \G)$. Under this equivalence, the subgroupoid of ghost loops of $X \git \G$ is equivalent to $\Lambda \left( X \git \G \right)$ \cite[Proposition 2.17]{huan2018}.

\subsection{Involutions of enhanced loop groupoids}
\label{sec:invLoopGrpd}

Let a $\Z_2$-graded compact Lie group $\Gh$ act on $X$. Fix $\omega \in \Gh \setminus \G$ and thereby a model $(\iota_{\omega}, \Theta_{\omega})$ for the deck transformation of $X \git \G$. We describe how $(\iota_{\omega}, \Theta_{\omega})$ induces involutions, again denoted by $(\iota_{\omega},\Theta_{\omega})$, of the enhanced loop groupoids of Section \ref{sec:enhLoopGrpd}. Analogues of Lemma \ref{lem:invTorsor} hold so that up to equivalence the involutions constructed below, and their associated quotients, depend only on the Real structure $\Gh$.

The involution $\iota_{\omega}$ of $\Loop_1^{\ext}(X\git \G)$ is given on objects and morphisms by
\[
\iota_{\omega}(S^1\xleftarrow[]{q} P \xrightarrow[]{f} X)
=
(S^1 \xleftarrow[]{r^*q} r^*\Ad_{\omega}(P) \xrightarrow[]{p \mapsto (r^*f)(p) \omega^{-1}} X)
\]
and $\iota_{\omega}(t,\phi) = (-t,r^*\phi)$, respectively. The component of $\Theta_{\omega}$ at $S^1 \xleftarrow[]{q} P \xrightarrow[]{f} X$ is induced by the $\G$-bundle isomorphism $R_{\omega^{-2}}$.

The involution of $\Loop_2^{\ext}(X \git \G)$ is given on objects and morphisms by $\iota_{\omega}(f)(s) = f(-s) \omega$ and $\iota_{\omega}(t,\phi) =( -t,s \mapsto \omega^{-1} \phi(-s) \omega)$, respectively. The component of $\Theta_{\omega}$ at $f$ is $\Theta_{\omega,f} =(0,\omega^2)$. By \cite[Proposition 4.9]{ginot2012}, there is a quotient topological stack
\[
\Loop_2^{\refl}(X \git \Gh) := \Loop_2^{\ext}(X \git \G) \git (\iota_{\omega}, \Theta_{\omega}).
\]
To clarify the geometric meaning of $\Loop_2^{\refl}(X \git \Gh)$, define a right $L_g \G \rtimes(\mathbb{R} \rtimes_{\pi} \Z)$-action on $\mathcal{P}_gX$ by
\begin{equation}
\label{eq:twistLoopGroupAct}
(f \cdot (\gamma,t,n))(s)
=
f(\epsilon(n)(t+s)) \omega^{-n} \gamma(\epsilon(s+t)).
\end{equation}
It is immediate that this descends to a right $L_{g}^{R \mhyphen \ext} \Gh$-action.

\begin{Prop}
\label{prop:Loop2Skel}
The $B\Z_2$-graded groupoid
\[
\bigsqcup_{g \in \pi_0(\G \git \G)_{-1}} \mathcal{P}_g X \git L_g^{\refl} \Gh
\sqcup
\bigsqcup_{g \in \pi_0(\G \git \G)_{+1} \slash \Z_2} \mathcal{P}_g X \git L^{\ext}_g \G
\]
is a skeleton of $\Loop_2^{\refl}(X \git \Gh)$. The corresponding double cover is equivalent to the skeleton \eqref{eq:skelLoop2} of $\Loop_2^{\refl}(X \git \Gh)$ and models $\Loop_2^{\ext}(X \git \G) \rightarrow \Loop_2^{\refl}(X \git \Gh)$.
\end{Prop}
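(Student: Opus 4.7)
The plan is to transport the involution $(\iota_{\omega},\Theta_{\omega})$ through the skeletal equivalence \eqref{eq:skelLoop2} and then identify the resulting quotient stack by descent. A direct computation gives, for $f \in \mathcal{P}_g X$, that $\iota_{\omega}(f) \in \mathcal{P}_{\omega^{-1} g^{-1} \omega} X$, so the involution permutes the components of \eqref{eq:skelLoop2} according to the $\Z_2$-action on $\pi_0(\G \git \G)$ underlying the partition \eqref{eq:conjClassesDecomp}. This naturally splits the argument into two cases.

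First I would treat $g \in \pi_0(\G \git \G)_{+1}$, where the $\Z_2$-orbit of $g$ consists of two distinct conjugacy classes and $(\iota_{\omega},\Theta_{\omega})$ freely exchanges the corresponding components of \eqref{eq:skelLoop2}. Standard descent along a free $\Z_2$-action identifies the quotient with either component, so picking a single representative per $\Z_2$-orbit yields the summands $\mathcal{P}_g X \git L_g^{\ext} \G$ in the proposed skeleton, equipped with trivial $B\Z_2$-grading.

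The main case is $g \in \pi_0(\G \git \G)_{-1}$. Here, by Lemma \ref{lem:invTorsor}, I may replace $\omega$ with an element $\omega_g \in C^R_{\Gh}(g) \setminus C_{\G}(g)$, the existence of which characterizes this summand, so that the involution restricts to a self-action of $\mathcal{P}_g X \git L_g^{\ext} \G$. Using \eqref{eq:twistLoopGroupAct}, the restricted action is exactly the extension of $L_g^{\ext}\G$ by the element $(0,1) \in \mathbb{R} \rtimes_{\pi} \Z$, while the coherence datum $\Theta_{\omega}$ identifies its square with the central element induced by right multiplication by $\omega_g^{-2}$. Matching these relations against the presentation $L_{P_g}^{R \mhyphen \ext} \Gh \simeq (L_g \G \rtimes (\mathbb{R} \rtimes_{\pi} \Z))\slash \langle (g,(1,0)),(\omega_g^{-2},(0,2)) \rangle$ from Section \ref{sec:enRealCent} identifies the descended groupoid with $\mathcal{P}_g X \git L_g^{R \mhyphen \ext} \Gh$, which is the intended meaning of the notation $L_g^{\refl} \Gh$; its $B\Z_2$-grading is read off from the projection $L_g^{R\mhyphen\ext} \Gh \rightarrow \mathsf{O}_2 \rightarrow \Z_2$ of diagram \eqref{diag:enhancedTwistStabCpt}.

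To finish, essential surjectivity and fully faithfulness of the comparison functor from the proposed skeleton into $\Loop_2^{\refl}(X \git \Gh)$ follow immediately from the case analysis together with the skeletal property of \eqref{eq:skelLoop2}. The statement about the double cover is then automatic: forgetting the $B\Z_2$-grading on fixed components pulls back along $L_g^{\ext}\G \hookrightarrow L_g^{R\mhyphen\ext}\Gh$, while on swapped components it doubles the indexing set, and the two effects together reassemble \eqref{eq:skelLoop2}. The most delicate step will be to verify that the descent data for the $\Z_2$-extension on fixed components agrees on the nose with the relation $((0,2),R_{\omega_g^{-2}})$ implemented by $\Theta_{\omega}$; here it is essential that, by Lemma \ref{lem:invTorsor}, the equivalence class of the resulting quotient is independent of the choice of $\omega_g$ within its $C_{\G}(g)$-torsor.
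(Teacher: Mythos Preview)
The paper does not prove this proposition: it explicitly omits the proof, noting that the result is not used later and referring the reader to Proposition~\ref{prop:loopGrpdModel} for an analogous argument. Your proposal follows exactly that template---split over the partition~\eqref{eq:conjClassesDecomp}, invoke Lemma~\ref{lem:invTorsor} to replace $\omega$ by an element of $C^R_{\Gh}(g)\setminus C_{\G}(g)$ on fixed components, and identify the resulting $\Z_2$-extension with the Schreier data presenting $L_g^{R\mhyphen\ext}\Gh$---so it is correct and is precisely the proof the authors have in mind.
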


Since Proposition \ref{prop:Loop2Skel} is not used in what follows, we omit its proof. See Proposition \ref{prop:loopGrpdModel} below for a similar result with proof.

The involution $(\iota_{\omega},\Theta_{\omega})$ of $\Loop_2^{\ext}(X \git \G)$ restricts to involutions of $\Loop_2^{\ext, \tor}(X \git \G)$ and $\Lambda \left( X \git \G \right)$. For example, $\iota_{\omega}$ sends a morphism $[t,h]$ in $\Lambda \left( X \git \G \right)$ to $[- t,\omega^{-1} h^{-1} \omega]$.

\begin{Def}
Let $\Lambda_{\pi}^{\refl} (X \git \Gh)$ be the quotient groupoid $\Lambda(X \git \G) \git (\iota_{\omega},\Theta_{\omega})$.
\end{Def}

Since $\Lambda(X \git \G)$ is a topological stack, so too is $\Lambda_{\pi}^{\refl} (X \git \Gh)$ \cite[Proposition 4.10]{ginot2012}. To give an explicit presentation of $\Lambda_{\pi}^{\refl} (X \git \Gh)$, note that the right $\mathbb{R} \rtimes_{\pi} C_{\Gh}^R(g)$-action on $X^g$ given by $x \cdot (t,\varsigma) = x \varsigma$
descends to a right $\Lambda_{\Gh}^R(g)$-action.

\begin{Prop}
\label{prop:loopGrpdModel}
There is an equivalence of $B\Z_2$-graded groupoids
\begin{equation}
\label{eq:conjClassDecompLambdaUnori}
\Lambda^{\refl}_{\pi} (X \git \Gh)
\simeq
\bigsqcup_{g \in \pi_0(\G^{\tor} \git \G)_{-1}} X^g \git \Lambda_{\Gh}^R(g)
\sqcup
\bigsqcup_{g \in \pi_0(\G^{\tor} \git \G)_{+1} \slash \Z_2}
X^g \git \Lambda_{\G}(g).
\end{equation}
\end{Prop}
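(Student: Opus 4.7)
The plan is to propagate the involution $(\iota_\omega,\Theta_\omega)$ through the skeletal decomposition \eqref{eq:conjClassDecompLambda}. From the explicit formula $\iota_\omega([t,h])=[-t,\omega^{-1}h^{-1}\omega]$ stated in Section~\ref{sec:invLoopGrpd}, $\iota_\omega$ sends the component $X^g\git\Lambda_\G(g)$ to the component labelled by $\omega^{-1}g\omega$. Consequently, the induced $\Z_2$-action on $\pi_0(\Lambda(X\git\G))\cong\pi_0(\G^{\tor}\git\G)$ agrees with the involution used to define the partition \eqref{eq:conjClassesDecomp}, so the quotient splits into contributions indexed by the two pieces of \eqref{eq:RealConjClasses}.

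For a free $\Z_2$-orbit $\{g,\omega g^{-1}\omega^{-1}\}\in\pi_0(\G^{\tor}\git\G)_{+1}/\Z_2$, the involution restricts to an equivalence of categories $X^g\git\Lambda_\G(g)\xrightarrow{\sim}X^{\omega g^{-1}\omega^{-1}}\git\Lambda_\G(\omega g^{-1}\omega^{-1})$. The $2$-categorical quotient of a disjoint union of two components by a free swap is equivalent to a single representative, which produces the second summand of \eqref{eq:conjClassDecompLambdaUnori}. Since no morphisms with non-trivial grade are adjoined to this piece, it carries the trivial $B\Z_2$-grading.

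For a fixed class $g\in\pi_0(\G^{\tor}\git\G)_{-1}$, the characterization in Section~\ref{sec:Z2GrGrps} yields an element $\omega\in C_{\Gh}^R(g)\setminus C_\G(g)$; by the $\Z_2$-grading, $\omega^2\in C_\G(g)$. With this choice, $\iota_\omega$ restricts to an involution of the $g$-component (noting that $X^g=X^{g^{-1}}$). Forming the $2$-categorical quotient enlarges the morphism group at each object by adjoining a new generator $\omega$ subject to the conjugation rules prescribed by the formula for $\iota_\omega$ and to the relation $\omega^2=[0,\omega^2]\in\Lambda_\G(g)$ imposed by $\Theta_\omega$. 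A direct comparison of this presentation with the definition $\Lambda_{\Gh}^R(g)=(\mathbb{R}\rtimes_\pi C_{\Gh}^R(g))/\langle(-1,g)\rangle$ identifies the enlarged morphism group with $\Lambda_{\Gh}^R(g)$, yielding the first summand. The resulting $B\Z_2$-grading matches $\Lambda_{\Gh}^R(g)\to\mathsf{O}_2\to\Z_2$ from the middle row of diagram \eqref{diag:RealEnhancedTwistStab}, and independence of the choices of $\omega$ and of orbit representative follows from Lemma~\ref{lem:invTorsor} applied pointwise over $X^g$.

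The main obstacle is the bookkeeping in the fixed-orbit case, where one has to match the relation $\omega^2=\Theta_{\omega,x}$ produced by the $2$-categorical quotient with the defining relation $(-1,g)\sim e$ of $\Lambda_{\Gh}^R(g)$, and to verify that conjugation of $[t,h]$ by the new generator $\omega$ agrees on the nose with Real conjugation inside $\mathbb{R}\rtimes_\pi C_{\Gh}^R(g)$, including the sign-flip on the $\mathbb{R}$-factor dictated by $\pi(\omega)=-1$. These verifications are direct but require careful tracking of the semidirect structure, and together constitute the only substantive departure from the corresponding statement in the oriented case.
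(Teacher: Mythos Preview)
Your proposal is correct and follows essentially the same approach as the paper's proof, which also decomposes via \eqref{eq:conjClassDecompLambda} and treats fixed and free $\Z_2$-orbits separately, invoking Schreier theory (your ``adjoin a generator $\omega$ subject to $\omega^2=\Theta_{\omega}$'' description) for the fixed case. One minor slip: in your first paragraph the target component label should be $\omega^{-1}g^{-1}\omega$, not $\omega^{-1}g\omega$, though you have it right in the next paragraph.
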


\begin{proof}
Model $\Lambda( X \git \G)$ by the equivalence \eqref{eq:conjClassDecompLambda}. Fix $\omega \in \Gh \setminus \G$ and model $\Lambda^{\refl}_{\pi} (X \git \Gh)$ by $\Lambda (X \git \G) \git (\iota_{\omega}, \Theta_{\omega})$. Then $\iota_{\omega}$ maps $X^g$ to $X^{\omega^{-1} g^{-1} \omega}$.

Suppose first that $g \in \pi_0(\G^{\tor} \git \G)_{-1}$. Then there exists $\omega^{\prime} \in C_{\Gh}^R(g) \setminus C_{\Gh}(g)$ and the involutions $\iota_{\omega}$ and $\iota_{\omega^{\prime}}$ are equivalent. In fact, there are equivalences
\[
(X^g \git \Lambda_{\G}(g)) \git (\iota_{\omega}, \Theta_{\omega})
\simeq
X^g \git \Lambda_{\Gh}^R(g)
\simeq
(X^g \git \Lambda_{\G}(g)) \git (\iota_{\omega^{\prime}}, \Theta_{\omega^{\prime}}).
\]
Indeed, the choices of $\omega$ and $\omega^{\prime}$ correspond to different presentations of $\Lambda_{\Gh}^R(g)$ as an extension of $\Z_2$ by $\Lambda_{\G}(g)$ via Schreier theory.

If instead $g \in \pi_0(\G^{\tor} \git \G)_{+1}$, then the conjugacy class of $g$ is mapped bijectively to the conjugacy class containing $\omega g^{-1} \omega^{-1}$. In particular, over the component $g \in \pi_0(\G^{\tor} \git \G)_{+1} \slash \Z_2$ the double cover is modelled by the morphism
\[
X^g \git \Lambda_{\G}(g) \sqcup X^{\omega^{-1} g^{-1} \omega} \git \Lambda_{\G}(\omega^{-1} g^{-1} \omega) \rightarrow X^g \git \Lambda_{\G}(g)
\]
which is simply projection to the first factor.
\end{proof}

By Proposition \ref{prop:loopGrpdModel}, the double cover $\Lambda \left( X \git \G \right) \rightarrow \Lambda^{\refl}_{\pi} (X \git \Gh)$ is modelled by
\[
\bigsqcup_{g \in \pi_0(\G^{\tor} \git \G)} X^g \git \Lambda_{\G}(g)
\rightarrow
\bigsqcup_{g \in \pi_0(\G^{\tor} \git_R \Gh)} X^g \git \Lambda_{\Gh}^R(g).
\]
This presentation can be used to realize $\Lambda^{\refl}_{\pi} (X \git \Gh)$ as a quotient of $\mathcal{L}^{\tor} (X \git \G)$ by the $\mathsf{O}_2$-action which rotates and reflects loops, together with the $\Z_2$-action on $X \git \G$ induced by $\Gh$. We focus on the case $X=\pt$ and use the equivalence \eqref{eq:loopGrpDecomp} to describe $\mathcal{L}^{\tor} B \G$; the generalization to arbitrary $X$ will be clear. The summand $B C_{\G}(g)$ of $\mathcal{L}^{\tor} B \G$ associated to $g \in \pi_0(\G^{\tor} \git \G)_{-1}$ has an $\mathsf{O}_2$-action arising from the exact sequence given by the middle column of diagram \eqref{diag:RealEnhancedTwistStab}.
The $\mathsf{O}_2$-action on the summand $B C_{\G}(g) \sqcup BC_{\G}(\omega g^{-1} \omega^{-1})$ associated to a $\Z_2$-orbit $\{g,\omega g^{-1} \omega^{-1}\} \subset \pi_0(\G \git \G)_{+1}$ relies on the untwisted specialization of Lemma \ref{lem:LambdaIsoNonFixConj}. The right column of diagram \eqref{diag:enhancedTwistStab} defines a $\mathbb{T}$-action on each summand. The $\Z_2$-action identifies one summand with the other using the map $i_g$ or its inverse. Since $i_g$ covers $(-)^{-1}: \mathbb{T} \rightarrow \mathbb{T}$, the actions of $\mathbb{T}$ and $\Z_2$ assemble to an action of $\mathsf{O}_2$. In this way, the equivalence of Proposition \ref{prop:loopGrpdModel} is one of $B \mathsf{O}_2$-graded groupoids.

We end this section by incorporating twists into the construction of $\Lambda^{\refl}_{\pi} (X \git \Gh)$. We therefore restrict attention to $\Z_2$-graded finite groups. Fix $\hat{\alpha} \in Z^3(B \Gh)$. The right $\mathbb{R} \rtimes_{\pi} {^{\hat{\alpha}}}C_{\Gh}^R(g)$-action on $\mathcal{P}_gX$ defined by $(f \cdot (t,(\varsigma,z)))(s)
=f(\pi(\varsigma)(s+t)) \varsigma$
descends to a ${^{\hat{\alpha}}}\Lambda^R_{\Gh}(g)$-action which preserves constant loops.

\begin{Def}
Let ${^{\alpha}}\Lambda \left( X \git \G \right) \rightarrow {^{\hat{\alpha}}}\Lambda^{\refl}_{\pi} (X \git \Gh)$ be the double cover
\[
\bigsqcup_{g \in \pi_0(\G \git \G)} X^g \git {^{\alpha}}\Lambda_{\G}(g)
\rightarrow
\bigsqcup_{g \in \pi_0(\G \git_R \Gh)} X^g \git {^{\hat{\alpha}}}\Lambda_{\Gh}^R(g).
\]
\end{Def}

The double cover ${^{\alpha}}\Lambda \left( X \git \G \right) \rightarrow {^{\hat{\alpha}}}\Lambda^{\refl}_{\pi} (X \git \Gh)$ can be described explicitly, generalizing the untwisted discussion following Proposition \ref{prop:loopGrpdModel}. The only modification is that the twisted version of Lemma \ref{lem:LambdaIsoNonFixConj} is used.

Since $\mathbb{T} \leq {^{\alpha}}\Lambda_{\G}(g)$ acts trivially on $X^g$, the morphism ${^{\alpha}}\Lambda ( X \git \G ) \rightarrow \Lambda ( X \git \G)$ is a $\mathbb{T}$-gerbe. Similarly, the morphism ${^{\hat{\alpha}}}\Lambda_{\pi}^{\refl} ( X \git \Gh) \rightarrow \Lambda_{\pi}^{\refl} ( X \git \Gh )$ is a Jandl $\mathbb{T}$-gerbe, in the sense of \cite[\S 2]{schreiber2007}.

\subsection{Enhanced loop groupoids of Lie groupoids}
\label{sec:invLoopGrpdLie}

We generalize the untwisted constructions of Sections \ref{sec:enhLoopGrpd} and \ref{sec:invLoopGrpd} to Lie groupoids.

Given a Lie groupoid $\mathfrak{X}$, let $\Loop_1(\mathfrak{X})$ be the category of $1$-morphisms $S^1 \rightarrow \mathfrak{X}$ in the bicategory $\Bibun$ of bibundles \cite[\S 3.2]{lerman2010}. An object of $\Loop_1(\mathfrak{X})$ is a diagram
\begin{equation}
\label{eq:Loop1Obj}
S^1 \xleftarrow[]{q} P \xrightarrow[]{f} \mathfrak{X}_0
\end{equation}
with $q$ a principal $\mathfrak{X}$-bundle and $f$ an $\mathfrak{X}$-equivariant map. Let $\Loop_1^{\ext}(\mathfrak{X})$ be the groupoid with the same objects as $\Loop_1(\mathfrak{X})$ and morphisms which incorporate loop rotation, as in Section \ref{sec:enhLoopGrpd}; see \cite[Definition 4.2]{huan2018}. Let $\Lambda \mathfrak{X}$ be the full subgroupoid of $\Loop_1^{\ext}(\mathfrak{X})$ on objects \eqref{eq:Loop1Obj} for which there exists a section $s$ of $q$ such that $f \circ s$ is constant. The existence of $s$ implies that $P$ is isomorphic to a bibundle arising from a smooth functor $S^1 \rightarrow \mathfrak{X}$ \cite[Lemma 3.36]{lerman2010}.

Let $Q: \mathfrak{X} \rightarrow \mathfrak{Y}$ be a bibundle. Interpreting $Q$ as a $1$-morphism in $\Bibun$ gives a functor $Q\circ (-) : \Loop_1(\mathfrak{X}) \rightarrow \Loop_1(\mathfrak{Y})$ which lifts to $Q\circ (-):\Loop_1^{\ext}(\mathfrak{X}) \rightarrow \Loop_1^{\ext}(\mathfrak{Y})$. If $Q$ is an equivalence, then so too is $Q\circ (-)$ and $Q\circ (-)$ restricts to an equivalence $\Lambda \mathfrak{X} \rightarrow \Lambda \mathfrak{Y}$.

Let now $\hat{\mathfrak{X}}$ be a $B \Z_2$-graded Lie groupoid with associated double cover $\mathfrak{X} \rightarrow \hat{\mathfrak{X}}$ and non-trivial deck transformation $\omega: \mathfrak{X} \rightarrow \mathfrak{X}$. Given a bibundle \eqref{eq:Loop1Obj}, define the twisted $\mathfrak{X}$-bundle ${^{\omega}}q: {^{\omega}}P \rightarrow S^1$ so that $P={^{\omega}}P$ and ${^{\omega}} q = q$ with ${^{\omega}}f: {^{\omega}}P \rightarrow \mathfrak{X}_0$ equal to $\omega_0 \circ f$ and right $\mathfrak{X}$-action $p \cdot \xi = p \omega(\xi)$. This definition extends to an involution of $\Loop_1^{\ext}(\mathfrak{X})$. The associated quotient map $\Loop_1^{\ext}(\mathfrak{X}) \rightarrow \Loop_1^{\refl}(\hat{\mathfrak{X}})$ is then a double cover. The subgroupoid $\Lambda(\mathfrak{X}) \subset \Loop_1^{\ext}(\mathfrak{X})$ is stable under the involution and so we obtain by restriction a double cover $\Lambda(\mathfrak{X}) \rightarrow \Lambda_{\pi}^{\refl}(\hat{\mathfrak{X}})$.

\begin{Prop}
\label{prop:reflTwLoopFunctoriality}
Let $\hat{\mathfrak{X}}$ and $\hat{\mathfrak{Y}}$ be weakly equivalent $B\Z_2$-graded Lie groupoids. Then $\Lambda_{\pi}^{\refl} \hat{\mathfrak{X}}$ and $\Lambda_{\pi}^{\refl} \hat{\mathfrak{Y}}$ are weakly equivalent $B \Z_2$-graded groupoids.
\end{Prop}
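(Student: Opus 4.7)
The plan is to lift a weak equivalence $\hat{\mathfrak{X}} \simeq \hat{\mathfrak{Y}}$ of $B\Z_2$-graded Lie groupoids to an equivariant weak equivalence of their double covers, pass through the bibundle-functoriality of $\Lambda(-)$ already established above, and then descend to the reflection quotients. Without loss of generality, a zig-zag reduction allows us to treat the case of a single local equivalence $\hat{Q}: \hat{\mathfrak{X}} \rightarrow \hat{\mathfrak{Y}}$ of $B\Z_2$-graded Lie groupoids, i.e.\ one whose grading map to $B\Z_2$ is respected.

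First I would observe that, since the $B\Z_2$-grading determines the double cover up to equivalence and $\hat{Q}$ respects the grading, pulling the double cover of $\hat{\mathfrak{Y}}$ back along $\hat{Q}$ produces a local equivalence $Q: \mathfrak{X} \rightarrow \mathfrak{Y}$ of the ungraded double covers, together with a natural isomorphism $\phi: Q \circ \omega_{\mathfrak{X}} \Rightarrow \omega_{\mathfrak{Y}} \circ Q$ intertwining the deck transformations. This pair $(Q,\phi)$ is an equivalence in the 2-category of Lie groupoids equipped with a $\Z_2$-action (cf.\ \cite[\S 4]{ginot2012}, which is the setting used to define $\Lambda_{\pi}^{\refl}$ in Section \ref{sec:invLoopGrpdLie}). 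Viewing $Q$ as a bibundle, the preceding discussion produces an equivalence of groupoids
\[
Q \circ (-): \Loop_1^{\ext}(\mathfrak{X}) \xrightarrow[]{\sim} \Loop_1^{\ext}(\mathfrak{Y})
\]
which restricts to an equivalence $\Lambda(\mathfrak{X}) \xrightarrow[]{\sim} \Lambda(\mathfrak{Y})$.

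Next I would verify that $Q \circ (-)$ intertwines the two reflection involutions up to a canonical natural isomorphism built from $\phi$: given an object $S^1 \xleftarrow[]{q} P \xrightarrow[]{f} \mathfrak{X}_0$ of $\Loop_1^{\ext}(\mathfrak{X})$, the composite $\omega_{\mathfrak{Y}} \circ f \circ (Q \circ (-))$ agrees with $Q \circ (\omega_{\mathfrak{X}} \circ f)$ via the $\mathfrak{Y}$-equivariant comparison supplied by $\phi$, and coherence of $\phi$ ensures compatibility with the involutivity data $\Theta$. Consequently the 2-equivalence $Q \circ (-)$ descends to a local equivalence of quotients
\[
\Lambda_{\pi}^{\refl}(\hat{\mathfrak{X}}) = \Lambda(\mathfrak{X}) \git (\iota_{\omega_{\mathfrak{X}}}, \Theta_{\omega_{\mathfrak{X}}})
\xrightarrow[]{\sim}
\Lambda(\mathfrak{Y}) \git (\iota_{\omega_{\mathfrak{Y}}}, \Theta_{\omega_{\mathfrak{Y}}}) = \Lambda_{\pi}^{\refl}(\hat{\mathfrak{Y}}),
\]
the descent itself being a consequence of the 2-functoriality of the $\Z_2$-quotient construction, e.g.\ \cite[Proposition 4.9, 4.10]{ginot2012}. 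Finally the composition $\Lambda_{\pi}^{\refl}(\hat{\mathfrak{X}}) \rightarrow B \Z_2$ is manifestly preserved by this descended map, so the equivalence is one of $B \Z_2$-graded groupoids.

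The main obstacle I anticipate is the bookkeeping in step two: one must track the coherence of the natural isomorphism $\phi$ through the involutivity $2$-cell $\Theta$, in order to guarantee that $Q \circ (-)$ is really an equivalence of groupoids with involution in the sense required for \cite[\S 4]{ginot2012}. Once this is in place, the remainder of the argument is a formal application of the 2-categorical quotient, together with the already-established functoriality of $\Lambda(-)$ on bibundles.
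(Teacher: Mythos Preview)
Your proposal is correct and follows essentially the same approach as the paper: reduce to a single $B\Z_2$-graded local equivalence, lift to a $\Z_2$-equivariant equivalence of the double covers, apply functoriality of $\Lambda$, and pass to quotients. The paper's proof is more terse---it simply asserts the $\Z_2$-equivariance of the induced equivalence $\Lambda\mathfrak{X}\simeq\Lambda\mathfrak{Y}$ and then ``passes to quotients''---whereas you spell out the intertwining isomorphism $\phi$ and the coherence with the involutivity datum $\Theta$; this is exactly the bookkeeping the paper's argument is suppressing.
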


\begin{proof}
It suffices to prove the statement for $\hat{\mathfrak{X}} \simeq \hat{\mathfrak{Y}}$ a $B \Z_2$-graded local equivalence. In this case, there is a $\Z_2$-equivariant local equivalence $\mathfrak{X} \simeq \mathfrak{Y}$. By functoriality of $\Lambda$, there is an induced $\Z_2$-equivariant local equivalence $\Lambda \mathfrak{X} \simeq \Lambda \mathfrak{Y}$. Passing to quotients completes the proof.
\end{proof}

\begin{Lem}
\label{lem:enhancedLoopLQG}
Let $\hat{\mathfrak{X}}$ be a $B\Z_2$-graded local quotient Lie groupoid. Then $\Lambda \mathfrak{X}$ and $\Lambda_{\pi}^{\refl}(\hat{\mathfrak{X}})$ are local quotient groupoids.
\end{Lem}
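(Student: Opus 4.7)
The plan is to reduce both claims to the case of a single quotient groupoid by a compact Lie group, where the explicit descriptions of Section \ref{sec:invLoopGrpd} apply, and then patch the resulting quotient pieces together into a countable cover. I interpret $B\Z_2$-graded local quotient Lie groupoid to mean that the coarse moduli of $\hat{\mathfrak{X}}$ admits a countable cover $\{U_\alpha\}$ over which $\hat{\mathfrak{X}}_{U_\alpha}$ is weakly equivalent to $\tilde U_\alpha \git \hat{\G}_\alpha$ for a Hausdorff $\tilde U_\alpha$ and a $\Z_2$-graded compact Lie group $\hat{\G}_\alpha$; pulling back the double cover $\mathfrak{X} \to \hat{\mathfrak{X}}$ gives a corresponding local equivalence $\mathfrak{X}_{U_\alpha} \simeq \tilde U_\alpha \git \G_\alpha$ with $\G_\alpha = \ker(\pi_\alpha)$.

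First I would treat $\Lambda \mathfrak{X}$. Using functoriality of $\Lambda$ under local equivalences (as recalled in Section \ref{sec:invLoopGrpdLie}) together with the equivalence \eqref{eq:conjClassDecompLambda}, each restriction $\Lambda \mathfrak{X}_{U_\alpha}$ is weakly equivalent to the disjoint union
\[
\bigsqcup_{g \in \pi_0(\G_\alpha^{\tor} \git \G_\alpha)} \tilde U_\alpha^g \git \Lambda_{\G_\alpha}(g).
\]
Each fixed set $\tilde U_\alpha^g$ is closed in $\tilde U_\alpha$, hence Hausdorff, and for $g$ of finite order $n$ the group $\Lambda_{\G_\alpha}(g) = (\mathbb{R} \times C_{\G_\alpha}(g))/\langle(-1,g)\rangle$ fibres over $\mathbb{R}/n\Z \simeq \mathbb{T}$ with compact Lie fibre $C_{\G_\alpha}(g)$, hence is compact Lie. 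Since the set of torsion conjugacy classes of a compact Lie group is countable (torsion elements of a maximal torus are countable, and conjugacy classes correspond to Weyl orbits), pulling back the $\{U_\alpha\}$ along the basepoint map $\vert \Lambda \mathfrak{X} \vert \to \vert \mathfrak{X} \vert$ and then refining according to these torsion components produces the required countable cover by quotients of Hausdorff spaces by compact Lie groups.

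For $\Lambda_{\pi}^{\refl}(\hat{\mathfrak{X}})$ the argument is parallel, using Proposition \ref{prop:reflTwLoopFunctoriality} in place of the untwisted functoriality and Proposition \ref{prop:loopGrpdModel} (which is stated in Section \ref{sec:invLoopGrpd} for $\Z_2$-graded compact Lie groups) in place of \eqref{eq:conjClassDecompLambda}. On each $U_\alpha$ we get a weak equivalence with
\[
\bigsqcup_{g \in \pi_0(\G_\alpha^{\tor} \git \G_\alpha)_{-1}} \tilde U_\alpha^g \git \Lambda_{\hat{\G}_\alpha}^R(g)
\;\sqcup\;
\bigsqcup_{g \in \pi_0(\G_\alpha^{\tor} \git \G_\alpha)_{+1}/\Z_2} \tilde U_\alpha^g \git \Lambda_{\G_\alpha}(g),
\]
and the same reasoning applies: $\Lambda_{\hat{\G}_\alpha}^R(g)$ is a $\Z_2$-extension of $\Lambda_{\G_\alpha}(g)$, hence still compact Lie for torsion $g$. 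Refining each $U_\alpha$ by the (countable) index set of Real torsion conjugacy classes yields a countable cover of $\vert \Lambda_{\pi}^{\refl}(\hat{\mathfrak{X}}) \vert$ whose pieces are quotients of Hausdorff spaces by compact Lie groups.

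The main obstacle I foresee is purely bookkeeping: one has to check carefully that the pulled-back local equivalences on the $U_\alpha$ actually induce local equivalences after applying $\Lambda$ and taking $\Z_2$-quotients (as opposed to mere weak equivalences on the ambient groupoid), and that indexing by Real torsion conjugacy classes sees all components of the coarse moduli of $\Lambda_\pi^{\refl}(\hat{\mathfrak{X}})$. Both of these are by the equivalences recorded in Section \ref{sec:invLoopGrpd} and Proposition \ref{prop:reflTwLoopFunctoriality}; there is no genuine analytic input beyond the compact Lie hypothesis on $\hat{\G}_\alpha$.
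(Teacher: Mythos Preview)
Your proposal is correct and follows essentially the same route as the paper: reduce locally to the case $\hat{\mathfrak{X}} \simeq X \git \Gh$ for a $\Z_2$-graded compact Lie group $\Gh$, and then invoke the explicit decompositions \eqref{eq:conjClassDecompLambda} and Proposition~\ref{prop:loopGrpdModel}. The paper's proof is a terse two-line version of exactly this argument; your additional verifications (that $\Lambda_{\G_\alpha}(g)$ and its Real enhancement are compact Lie for torsion $g$, and that the torsion conjugacy classes are countable) fill in details the paper leaves implicit.
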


\begin{proof}
The definitions of $\Lambda \mathfrak{X}$ and $\Lambda_{\pi}^{\refl} \hat{\mathfrak{X}}$ show that it suffices to work locally on $\hat{\mathfrak{X}}$. Since $\hat{\mathfrak{X}}$ is a local quotient groupoid, it suffices to prove the lemma for $\hat{\mathfrak{X}}$ of the form $X \git \Gh$, where $\Gh$ is a $\Z_2$-graded compact Lie group acting on a Hausdorff space $X$, in which case the statement follows from Proposition \ref{prop:loopGrpdModel}.
\end{proof}

\begin{Rem}
The analogue of Lemma \ref{lem:enhancedLoopLQG} for $\Lambda \mathfrak{X}$ is implied by \cite[Lemma 5.4.1]{luecke2019}, where a groupoid locally equivalent to $\Lambda \mathfrak{X}$ is proved to be a local quotient. 
\end{Rem}

\section{Real quasi-elliptic cohomology}
\label{sec:QR}
\subsection{The complex case}

We briefly recall the definition of twisted quasi-elliptic cohomology \cite{huan2018,huan2018b,huan2020}.

Let a finite group $\G$ act on a manifold $X$. Fix $\alpha \in Z^3(B \G)$.

\begin{Def}[{\cite[Definition 5.6]{huan2020}}]
The \emph{$\alpha$-twisted quasi-elliptic cohomology} of $X \git \G$ is
\[
\Q^{\bullet+\alpha}(X \git \G)
=
K^{\bullet + \uptau(\alpha)}(\Lambda \left( X \git \G \right)).
\]
\end{Def}

The morphism $\Lambda (X \git \G) \rightarrow B \mathbb{T}$ which tracks loop rotation gives $\Q^{\bullet}(X \git \G)$ the structure of a $K_{\mathbb{T}}^{\bullet}(\pt)$-algebra while ${^{\alpha}}\Lambda ( X \git \G ) \rightarrow \Lambda ( X \git \G )$ gives $\Q^{\bullet+\alpha}(X \git \G)$ the structure of a $\Q^{\bullet}(X \git \G)$-module.

The equivalence \eqref{eq:conjClassDecompLambda} implies an isomorphism
\begin{equation}
\label{eq:QDecomp}
\Q^{\bullet + \alpha}(X \git \G)
\simeq
\prod_{g \in \pi_0(\G \git \G)}
K^{\bullet + \uptau(\alpha)}_{\Lambda_{\G}(g)}(X^g).
\end{equation}
In particular, by Example \ref{ex:enhandCentIdent}, the summand labelled by $e \in \pi_0(\G \git \G)$ is
\begin{equation}
\label{eq:trivSummSplit}
K^{\bullet + \uptau(\alpha)}_{\Lambda_{\G}(e)}(X^e)
=
K^{\bullet}_{\mathbb{T} \times \G}(X)
\simeq
K^{\bullet}_{\G}(X) \otimes_{\Z} K^{\bullet}_{\mathbb{T}}(\pt)
\simeq
K^{\bullet}_{\G}(X)[q^{\pm 1}].
\end{equation}

If instead $\G$ is assumed to be compact the twist is trivial, then $\Q^{\bullet}(X \git \G)$ is defined to be the $K^{\bullet}_{\mathbb{T}}(\pt)$-subalgebra of the \eqref{eq:QDecomp} obtained by restricting the product to $g$ in torsion conjugacy classes of $\G$. More generally, the quasi-elliptic cohomology of a local quotient Lie groupoid $\mathfrak{X}$ is $\Q^{\bullet}(\mathfrak{X}) = K^{\bullet}(\Lambda \mathfrak{X})$. Basic properties of $\Q^{\bullet}$ are established in \cite[\S 3]{huan2018}. While $\Q^{\bullet}$ is a generalized cohomology theory, it is not elliptic. However, Tate $K$-theory, which is elliptic, can be recovered from $\Q^{\bullet}$. See \cite[\S 4.2]{huan2018}, \cite[Remark 6.19]{dove2019} and Section \ref{sec:RealTateKThy} below.

\subsection{The Real case}

Let a non-trivially $\Z_2$-graded finite group $\Gh$ act on a manifold $X$. Fix $\hat{\alpha} \in Z^3(B \Gh)$.

\begin{Def}
\label{def:QR}
The \emph{$\hat{\alpha}$-twisted Real quasi-elliptic cohomology} of $X \git \G$ is
\[
\QR^{\bullet + \hat{\alpha}}(X \git \G)
=
KR^{\bullet + \tilde{\uptau}^{\refl}_{\pi}(\hat{\alpha})}(\Lambda ( X \git \G )),
\]
where $\Lambda ( X \git \G )$ is considered as the double cover $\Lambda ( X \git \G ) \rightarrow \Lambda_{\pi}^{\refl} ( X \git \Gh )$.
\end{Def}

The $B \Z_2$-graded morphism $\Lambda^{\refl}_{\pi} (X \git \Gh) \rightarrow B \mathsf{O}_2$ which tracks loop rotation and reflection makes $\QR^{\bullet}(X \git \G)$ into a $KR^{\bullet}_{\mathbb{T}}(\pt)$-algebra and, in particular, a module over $\Z[q^{\pm 1}] \subset KR^{\bullet}_{\mathbb{T}}(\pt)$. The $B \Z_2$-graded morphism ${^{\hat{\alpha}}}\Lambda^{\refl}_{\pi} (X \git \Gh) \rightarrow \Lambda^{\refl}_{\pi} (X \git \Gh)$ gives $\QR^{\bullet+\hat{\alpha}}(X \git \G)$ the structure of a $\QR^{\bullet}(X \git \G)$-module.

\begin{Prop}
\label{prop:QRGlobQuot}
There is a $KR^{\bullet}_{\mathbb{T}}(\pt)$-module isomorphism
\begin{equation}
\label{eq:QRDecomp}
\QR^{\bullet + \hat{\alpha}}(X \git \G)
\simeq
\prod_{g \in \pi_0(\G \git_R \Gh)}{^{\pi}}K^{\bullet + \tilde{\uptau}_{\pi}^{\refl}(\hat{\alpha})}_{\Lambda^R_{\Gh}(g)}(X^g).
\end{equation}
\end{Prop}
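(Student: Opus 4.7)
The plan is to reduce the proposition to a direct application of the $B\Z_2$-graded groupoid decomposition supplied by Proposition \ref{prop:loopGrpdModel}, combined with the additivity of twisted Freed--Moore $K$-theory on disjoint unions of local quotient groupoids.

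First, I would unwind Definition \ref{def:QR}: the group $\QR^{\bullet + \hat{\alpha}}(X \git \G)$ equals ${^{\pi}}K^{\bullet + \tilde{\uptau}_{\pi}^{\refl}(\hat{\alpha})}(\Lambda_{\pi}^{\refl}(X \git \Gh))$, where the $\pi$-twist is induced by $\Lambda_{\pi}^{\refl}(X \git \Gh) \rightarrow B\mathsf{O}_2 \rightarrow B\Z_2$. Since $\Gh$ is finite, every element of $\G$ is torsion, so Proposition \ref{prop:loopGrpdModel} supplies a $B\Z_2$-graded equivalence
\[
\Lambda_{\pi}^{\refl}(X \git \Gh)
\simeq
\bigsqcup_{g \in \pi_0(\G \git \G)_{-1}} X^g \git \Lambda_{\Gh}^R(g)
\;\sqcup\;
\bigsqcup_{g \in \pi_0(\G \git \G)_{+1} \slash \Z_2} X^g \git \Lambda_{\G}(g),
\]
whose index set is precisely $\pi_0(\G \git_R \Gh)$ by the partition \eqref{eq:RealConjClasses}.

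Next, I would invoke additivity of twisted Freed--Moore $K$-theory on disjoint unions of $B\Z_2$-graded local quotient groupoids to convert this equivalence into a product decomposition of ${^{\pi}}K$-groups. The twist $\tilde{\uptau}_{\pi}^{\refl}(\hat{\alpha})$ restricts componentwise to the cocycle $\tilde{\uptau}_{\pi}^{\refl}(\hat{\alpha})_g \in Z^{2+\pi}(BC_{\Gh}^R(g))$ introduced in Section \ref{sec:twEnCent}, pulled back along the surjection $\Lambda_{\Gh}^R(g) \twoheadrightarrow C_{\Gh}^R(g)$. On each summand, the Freed--Moore $K$-theory of the quotient groupoid is by definition the equivariant Freed--Moore $K$-theory of $X^g$, yielding the factor ${^{\pi}}K^{\bullet + \tilde{\uptau}_{\pi}^{\refl}(\hat{\alpha})_g}_{\Lambda_{\Gh}^R(g)}(X^g)$. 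For $g \in \pi_0(\G \git \G)_{-1}$ the group $\Lambda_{\Gh}^R(g)$ is genuinely $\Z_2$-graded and this factor carries a non-trivial Real structure, whereas for $g \in \pi_0(\G \git \G)_{+1} \slash \Z_2$ one has $\Lambda_{\Gh}^R(g) = \Lambda_{\G}(g)$ trivially graded and ${^{\pi}}K$ collapses to ordinary twisted equivariant complex $K$-theory, matching the right-hand side of \eqref{eq:QRDecomp} in both cases.

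Finally, the $KR^{\bullet}_{\mathbb{T}}(\pt)$-module structure on both sides arises from the $B\Z_2$-graded morphisms to $B\mathsf{O}_2$ on $\Lambda_{\pi}^{\refl}(X \git \Gh)$ and on each $X^g \git \Lambda_{\Gh}^R(g)$, respectively. Since the equivalence of Proposition \ref{prop:loopGrpdModel} is in fact one of $B\mathsf{O}_2$-graded groupoids (by the discussion of diagrams \eqref{diag:enhancedTwistStab} and \eqref{diag:RealEnhancedTwistStab} in Section \ref{sec:invLoopGrpd}), the decomposition is module-linear. I expect the main obstacle to be bookkeeping the grading on the $+1$ summands: one must verify that the deck $\Z_2$-action on $\Lambda(X \git \G)$ swaps the two components $X^g \git \Lambda_{\G}(g)$ and $X^{\omega g^{-1} \omega^{-1}} \git \Lambda_{\G}(\omega g^{-1} \omega^{-1})$ freely, so that the quotient groupoid is equivalent to a single copy with trivially inherited $B\Z_2$-grading, and that the two $\mathbb{T}$-actions differ by inversion. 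This last point is the untwisted specialization of Lemma \ref{lem:LambdaIsoNonFixConj}, which is precisely what assembles the $\mathbb{T}$- and $\Z_2$-actions on the quotient into a coherent $\mathsf{O}_2$-action compatible with the decomposition.
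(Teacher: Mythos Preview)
Your proposal is correct and follows essentially the same approach as the paper: both reduce the decomposition to the $B\Z_2$-graded equivalence of Proposition~\ref{prop:loopGrpdModel} together with the observation (made in the discussion immediately following that proposition, invoking the untwisted Lemma~\ref{lem:LambdaIsoNonFixConj}) that this equivalence respects the $B\mathsf{O}_2$-grading, which ensures the $KR^{\bullet}_{\mathbb{T}}(\pt)$-module structure is preserved. Your write-up is simply a more detailed unpacking of the paper's one-sentence argument.
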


\begin{proof}
This follows from the equivalence of Proposition \ref{prop:loopGrpdModel} and the comments thereafter which explain that this equivalence is one of $B\mathsf{O}_2$-graded groupoids.
\end{proof}

Using the partition \eqref{eq:RealConjClasses}, the isomorphism \eqref{eq:QRDecomp} can be written explicitly as
\begin{equation*}
\QR^{\bullet + \hat{\alpha}}(X \git \G)
\simeq
\prod_{g \in \pi_0(\G \git \G)_{-1}}
KR^{\bullet+\tilde{\uptau}_{\pi}^{\refl}(\hat{\alpha})}_{\Lambda_{\G}(g)}(X^g)
\times
\prod_{g \in \pi_0(\G \git \G)_{+1} \slash \Z_2}
K^{\bullet+\uptau(\alpha)}_{\Lambda_{\G}(g)}(X^g).
\end{equation*}
By Example \ref{ex:enhandCentIdent}, the summand labelled by $e \in \G$ is
\[
KR^{\bullet + \tilde{\uptau}_{\pi}^{\refl}(\hat{\alpha})}_{\Lambda^R_{\Gh}(e)}(X^e)
\simeq
KR^{\bullet}_{\mathbb{T} \times \G}(X),
\]
where $\mathbb{T} \times \G$ has Real structure $\mathbb{T} \rtimes_{\pi} \Gh$. In general, there is no simple $KR^{\bullet}_{\mathbb{T}}(\pt)$-algebra decomposition of $KR^{\bullet}_{\mathbb{T} \times \G}(X)$ analogous to \eqref{eq:trivSummSplit}. However, if $X$ is compact, then Corollary \ref{cor:KRMackeyDecompReform} implies a $KR^{\bullet}_{\mathbb{T}}(\pt)$-module isomorphism $KR^{\bullet}_{\mathbb{T} \times \G}(X) \simeq KR^{\bullet}_{\G}(X)[q^{\pm 1}]$. See the end of this section for details on a similar calculation.

To extend $\QR^{\bullet}$ to Lie groupoids, let $\hat{\mathfrak{X}}$ be a $B \Z_2$-graded local quotient Lie groupoid. Since $\Lambda_{\pi}^{\refl}$ is local quotient (Lemma \ref{lem:enhancedLoopLQG}), we can deinfe
\[
\QR^{\bullet}(\mathfrak{X})
=
KR^{\bullet}(\Lambda \mathfrak{X}),
\]
where $\Lambda \mathfrak{X} \rightarrow \Lambda_{\pi}^{\refl} \mathfrak{X}$ is the double cover constructed in Section \ref{sec:invLoopGrpdLie}.

\begin{Prop}
\label{prop:QRFunctoriality}
Let $\hat{\mathfrak{X}}$ and $\hat{\mathfrak{Y}}$ be weakly equivalent $B \Z_2$-graded local quotient Lie groupoids. Then $\QR^{\bullet}(\mathfrak{X})$ and $\QR^{\bullet}(\mathfrak{Y})$ are isomorphic $KR^{\bullet}_{\mathbb{T}}(\pt)$-algebras.
\end{Prop}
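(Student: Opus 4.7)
The plan is to reduce the statement to three ingredients already in place: the weak equivalence invariance of the unoriented loop groupoid construction (Proposition \ref{prop:reflTwLoopFunctoriality}), the fact that these loop groupoids are local quotients (Lemma \ref{lem:enhancedLoopLQG}), and the weak equivalence invariance of Freed--Moore $K$-theory of $B\Z_2$-graded local quotient groupoids \cite[\S 3]{gomi2017}.

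First, I would apply Proposition \ref{prop:reflTwLoopFunctoriality} to a zig-zag of local equivalences $\hat{\mathfrak{X}} \leftarrow \hat{\mathfrak{Z}} \rightarrow \hat{\mathfrak{Y}}$ realizing the weak equivalence of $\hat{\mathfrak{X}}$ and $\hat{\mathfrak{Y}}$. This yields a zig-zag of local equivalences of $B\Z_2$-graded groupoids
\[
\Lambda_{\pi}^{\refl} \hat{\mathfrak{X}} \leftarrow \Lambda_{\pi}^{\refl} \hat{\mathfrak{Z}} \rightarrow \Lambda_{\pi}^{\refl} \hat{\mathfrak{Y}}
\]
covered by a corresponding zig-zag $\Lambda \mathfrak{X} \leftarrow \Lambda \mathfrak{Z} \rightarrow \Lambda \mathfrak{Y}$ of their double covers. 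By Lemma \ref{lem:enhancedLoopLQG}, each of these groupoids is a local quotient groupoid.

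Next, I would invoke the invariance of twisted Freed--Moore $K$-theory under local equivalences of $B\Z_2$-graded local quotient groupoids, which is built into the framework recalled in Section \ref{sec:equivKRThy}; since the $\pi$-twist used in $\QR^{\bullet}$ is pulled back along the tautological grading map, it is preserved by pullback along the local equivalences in the zig-zag. This yields a chain of isomorphisms of abelian groups $KR^{\bullet}(\Lambda \mathfrak{X}) \simeq KR^{\bullet}(\Lambda \mathfrak{Z}) \simeq KR^{\bullet}(\Lambda \mathfrak{Y})$, that is, $\QR^{\bullet}(\mathfrak{X}) \simeq \QR^{\bullet}(\mathfrak{Z}) \simeq \QR^{\bullet}(\mathfrak{Y})$.

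Finally, to upgrade these abelian group isomorphisms to $KR^{\bullet}_{\mathbb{T}}(\pt)$-algebra isomorphisms, I would observe that the $KR^{\bullet}_{\mathbb{T}}(\pt)$-algebra structure is induced by pullback along the natural $B\Z_2$-graded morphism $\Lambda_{\pi}^{\refl}(-) \rightarrow B \mathsf{O}_2$ recording loop rotation and reflection, together with the external product in Freed--Moore $K$-theory. Since this morphism is defined by the $\Loop_1^{\ext}$ and reflection construction which is intrinsic to the bibundle description, it commutes with the local equivalences in the zig-zag; naturality of pullback and of the external product then imply that every arrow in the zig-zag of $KR$-groups is a $KR^{\bullet}_{\mathbb{T}}(\pt)$-algebra map. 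The main (minor) obstacle is verifying this compatibility with the $B\mathsf{O}_2$-grading along the local equivalences, but this is a direct consequence of the naturality of $Q \circ (-)$ from Section \ref{sec:invLoopGrpdLie}.
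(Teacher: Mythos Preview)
Your proposal is correct and follows essentially the same approach as the paper: apply Proposition \ref{prop:reflTwLoopFunctoriality} to obtain a $B\Z_2$-graded weak equivalence of unoriented loop groupoids, then invoke invariance of Freed--Moore $K$-theory under weak equivalence \cite[\S 3.1]{gomi2017}. Your version is in fact more detailed than the paper's, which does not explicitly invoke Lemma \ref{lem:enhancedLoopLQG} or spell out the compatibility with the $B\mathsf{O}_2$-grading needed for the $KR^{\bullet}_{\mathbb{T}}(\pt)$-algebra structure.
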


\begin{proof}
By Proposition \ref{prop:reflTwLoopFunctoriality}, a $B \Z_2$-graded weak equivalence $\hat{\mathfrak{X}} \simeq \hat{\mathfrak{Y}}$ induces a $B \Z_2$-graded weak equivalence $\Lambda_{\pi}^{\refl} \hat{\mathfrak{X}} \simeq \Lambda_{\pi}^{\refl} \hat{\mathfrak{Y}}$. The proposition then follows from the invariance of twisted $K$-theory under weak equivalence \cite[\S 3.1]{gomi2017}.
\end{proof}

We end this section by discussing some $K$-theory groups related to the decomposition \eqref{eq:QRDecomp}. Work in the setting of Definition \ref{def:QR} and set $\hat{\theta}_g = \uptau_{\pi}^{\refl}(\hat{\alpha})_g$. Consider ${^{\pi}}K^{\bullet+\hat{\theta}_g}_{\mathbb{T} \rtimes_{\pi} C^R_{\Gh}(g)}(X^g)$, where $\mathbb{T} \rtimes_{\pi} C^R_{\Gh}(g)$ acts on $X^g$ via the projection to $C^R_{\Gh}(g)$. The homomorphism $p$ of diagram \eqref{diag:finiteOrderRealConstLoop} relates this $K$-theory group to those appearing in \eqref{eq:QRDecomp}. Since the normal subgroup $\mathbb{T}$ acts trivially on $X^g$, we are in the setting of Corollary \ref{cor:KRMackeyDecompReform}. The restriction of $\hat{\theta}_g$ to $\mathbb{T}$ is trivial and all irreducible representations of $\mathbb{T}$ admit a Real structure with respect to the Real structure $\mathsf{O}_2$, through which $\mathbb{T} \rtimes_{\pi} C_{\Gh}^R(g)$ acts. It follows that $\mathbb{T} \rtimes_{\pi} C_{\Gh}^R(g)$ acts trivially on $\Irr(\mathbb{T})$ and
\[
\hat{\mathsf{Q}}(\rho)
=
\left(\mathbb{T} \rtimes_{\pi} C_{\Gh}^R(g)\right) (\rho) \slash \mathbb{T}
\simeq
C_{\Gh}^R(g)
\]
for each $\rho \in \Irr(\mathbb{T})$. To identify $\hat{\nu}_{\rho}$, apply Proposition \ref{prop:nuRestrict} with $\Hh$, $\hat{\mathsf{Q}}$ and $\Gh$ equal to $\mathbb{T}$, $C_{\Gh}^R(g)$ and $\mathbb{T} \rtimes_{\pi} C_{\Gh}^R(g)$, respectively. Since $C_{\Gh}^R(g)$ acts trivially on $\Irr(\mathbb{T})$, the Real central extension $\mathsf{E}$ is the pushout of ${^{\hat{\theta}_g}}C_{\Gh}^R(g)$ along $\rho$, which a direct calculation shows is again ${^{\hat{\theta}_g}}C^R_{\Gh}(g)$. It follows that $\hat{\nu}_{\rho} = \hat{\theta}_g$ for all $\rho \in \Irr^{\theta}(\Hh)$. Summarizing, Corollary \ref{cor:KRMackeyDecompReform} implies the isomorphisms
\begin{equation}
\label{eq:MackeyDecompExample}
{^{\pi}}K^{\bullet+\hat{\theta}_g}_{\mathbb{T} \rtimes_{\pi} C^R_{\Gh}(g)}(X^g)
\simeq
\bigoplus_{\rho \in \Irr(\mathbb{T})}
{^{\pi}}K^{\bullet + \hat{\theta}_g}_{C^R_{\Gh}(g)}(X^g)
\simeq
{^{\pi}}K^{\bullet + \hat{\theta}_g}_{C^R_{\Gh}(g)}(X^g)[q^{\pm 1}].
\end{equation}
When the $\Z_2$-grading of $C_{\Gh}^R(g)$ is trivial, in which case $\mathbb{T} \rtimes_{\pi} C^R_{\Gh}(g) = \mathbb{T} \times C_{\G}(g)$, the isomorphisms \eqref{eq:MackeyDecompExample} can be verified without Corollary \ref{cor:KRMackeyDecompReform}.

\subsection{Examples}
\label{sec:examples}

Keep the notation of Definition \ref{def:QR}.

\begin{Ex}
Let $X = \pt$. The groupoid $\mathcal{L} B \G$ is a $B\Z$-groupoid in the sense of \cite[Definition 2.0.1]{luecke2019}, with $B\Z$-action defined by the automorphism of the identity functor whose component at $g \in \mathcal{L} B \G$ is $g$. Viewing this $B\Z$-action as a rigidified $\mathbb{T}$-action, we have an equivalence
\[\Lambda B\G \simeq 
\bigsqcup\limits_{g \in \pi_0(\G \git \G)} B \Lambda_{\G}(g) \simeq 
\bigsqcup\limits_{g \in \pi_0(\G \git \G)} B C_{\G}(g)\git \mathbb{T},\]
whence $\Q^{\bullet + \alpha}_{\G}(\pt)$ models $K_{\mathbb{T}}^{\bullet + \uptau(\alpha)} (\mathcal{L} B \G)$.

In the $\Z_2$-graded case there is an equivalence $\Lambda_{\pi}^{\refl} B \Gh \simeq \mathcal{L} B\G \git \mathsf{O}_2$ and $\QR^{\bullet + \hat{\alpha}}_{\G}(\pt)$ models $KR_{\mathbb{T}}^{\bullet +\tilde{\uptau}_{\pi}^{\refl}(\hat{\alpha})} (\mathcal{L} B \G)$.
The equivalences \eqref{eq:loopGrpDecompUnori} and \eqref{eq:conjClassDecompLambdaUnori} give
\[
KR^{\bullet + \tilde{\uptau}_{\pi}^{\refl}(\hat{\alpha})} (\mathcal{L} B \G)
\simeq
\prod_{g \in \pi_0(\G \git \G)_{-1}}
KR^{\bullet+\tilde{\uptau}_{\pi}^{\refl}(\hat{\alpha})}_{C_{\G}(g)}(\pt)
\times
\prod_{g \in \pi_0(\G \git \G)_{+1} \slash \Z_2}
K^{\bullet+\uptau(\alpha)}_{C_{\G}(g)}(\pt)
\]
and
\begin{equation*}
\label{eq:QRpoint}
\QR^{\bullet + \hat{\alpha}}_{\G}(\pt)
\simeq
\prod_{g \in \pi_0(\G \git \G)_{-1}}
KR^{\bullet+\tilde{\uptau}_{\pi}^{\refl}(\hat{\alpha})}_{\Lambda_{\G}(g)}(\pt)
\times
\prod_{g \in \pi_0(\G \git \G)_{+1} \slash \Z_2}
K^{\bullet+\uptau(\alpha)}_{\Lambda_{\G}(g)}(\pt).\qedhere
\end{equation*}
\end{Ex}

\begin{Ex}
When $\G = \{e\}$, there is an equivalence $\Lambda \mathfrak{X} \simeq X \times B \mathbb{T}$ which leads to $K^{\bullet}_{\mathbb{T}}(\pt)$-algebra isomorphisms
\[
\Q^{\bullet}(X)
\simeq
K^{\bullet}_{\mathbb{T}}(X)
\simeq
K^{\bullet}(X)[q^{\pm 1}].
\]

The analogous $\Z_2$-graded setting takes $\Gh = \Z_2$, in which case $\Lambda X \rightarrow \Lambda_{\pi}^{\refl} (X \git \Gh)$ is equivalent to $X \times B \mathbb{T} \rightarrow X \times_{\Z_2} B \mathbb{T}$, where $B\mathbb{T}$ is viewed as the double cover $B\mathbb{T} \rightarrow B \mathsf{O}_2$. This leads to a $KR^{\bullet}_{\mathbb{T}}(\pt)$-algebra isomorphism
\[
\QR^{\bullet}(X)
\simeq 
KR^{\bullet}_{\mathbb{T}}(X)
\simeq
KR^{\bullet}(X)[q^{\pm 1}].
\]
When the $\Gh$-action on $X$ is trivial this reduces to $\QR^{\bullet}(X) \simeq KO^{\bullet}(X)[q^{\pm 1}]$.
\end{Ex}

\begin{Ex} \label{QEllRcyclic}
Let $\G = \Z_n$ with multiplicative generator $r$ and $\Gh = D_{2n}$. The $\Z_2$-action on $\pi_0(\Z_n \git \Z_n) =\Z_n$ is trivial and Proposition \ref{prop:QRGlobQuot} gives
\begin{equation*}
\label{eq:RealZnDecompDihed}
\QR^{\bullet}_{\Z_n}(\pt)
\simeq
\prod_{m=0}^{n-1}
KR^{\bullet}_{\Lambda_{\Z_n}(r^m)}(\pt).
\end{equation*}
We compute $KR^{\bullet}_{\Lambda_{\Z_n}(r^m)}(\pt)$ as follows. Since $C^R_{D_{2n}}(r^m) = D_{2n}$, we have
\[
\Lambda_{D_{2n}}^R(r^m) \simeq (\mathbb{R} \rtimes_{\pi} D_{2n}) \slash \langle (-1,r^m) \rangle.
\]
Given $\lambda \in \mathbb{R}$, denote by $V_{\lambda} = \mathbb{C}$ the representation of $\mathbb{R}$ on which $t \in \mathbb{R}$ acts by $e^{2\pi i t \lambda}$. The irreducible representations of $\mathbb{R} \times \Z_n$ are of the form $V_{\lambda} \boxtimes U_k$, where $\lambda \in \mathbb{R}$ and $k \in \{0, \dots, n-1\}$. With respect to the Real structure $\mathbb{R} \rtimes_{\pi} D_{2n}$, each representation $V_{\lambda} \boxtimes U_k$ admits a Real structure and $RR(\mathbb{R} \times \Z_n)=RR(\mathbb{R} \times \Z_n,\mathbb{R})$. The same conclusions hold for the quotient $\Lambda_{\Z_n}(r^m)$, where only the representations $V_{\lambda} \boxtimes U_k$ with $\lambda \equiv \frac{mk}{n} \mod \Z$ are relevant. Writing $x_m$ for the class of $V_{\frac{m}{n}} \boxtimes U_1$, there is an isomorphism
\begin{equation}
\label{eq:QRZnPt}
KR^{\bullet}_{\Lambda_{\Z_n}(r^m)}(\pt)
\simeq
KR^{\bullet}(\pt)[q^{\pm 1}, x_m] \slash \langle x_m^n - q^m \rangle.
\end{equation}
For comparison and later use, recall from \cite[Example 3.3]{huan2018} that
\begin{equation}
\label{eq:QZnPt}
\Q^{\bullet}_{\Z_n}(\pt)
\simeq
\prod_{m=0}^{n-1}
K^{\bullet}(\pt)[q^{\pm 1}, x_m] \slash \langle x_m^n - q^m \rangle. \qedhere
\end{equation}
\end{Ex}

\begin{Ex}
Let $\G = \mathbb{T}$ and $\Gh = \mathsf{O}_2$. The $\Z_2$-action on $\pi_0(\mathbb{T}^{\tor}\git \mathbb{T}) = \mathbb{T}^{\tor}$ is trivial and Proposition \ref{prop:QRGlobQuot} gives
\[
\QR^{\bullet}_{\mathbb{T}}(\pt)
\simeq
\prod_{g \in \mathbb{T}^{\tor}} KR^{\bullet}_{\Lambda_{\mathbb{T}}(g)}(\pt).
\]
To compute $KR^{\bullet}_{\Lambda_{\mathbb{T}}(g)}(\pt)$, identify $\mathbb{T}$ with $\mathbb{R} \slash \Z$, so that $\mathbb{T}^{\tor} \simeq \mathbb{Q} \cap [0,1)$ and choose a lift $r \in \mathbb{Q}$ of $g$. Let $Z_r = \mathbb{C}$ be the representation of $\Lambda_{\mathbb{T}}(g)$ given by
\[
\Lambda_{\mathbb{T}}(g) \simeq (\mathbb{R} \times \mathbb{R}) \slash \langle (-1,r), (0,1) \rangle \xrightarrow[]{[t,x] \mapsto [x+rt]} \mathbb{R} \slash \Z \xrightarrow[]{\exp(2 \pi i (-))} \mathbb{T}.
\]
Any irreducible representation of $\Lambda_{\mathbb{T}}(g)$ is isomorphic to $Z_r$ for some lift $r$ of $g$, or its dual $Z_r^{\vee}$. Note that $Z_r \boxtimes U_k \simeq Z_{r+k}$ for all $k \in \Z$. The representation $Z_r$ admits a Real structure. Using these observations and writing $z_r= [Z_r]$, we conclude that $RR(\mathbb{T})=RR(\mathbb{T},\mathbb{R})$ and
\[
KR^{\bullet}_{\Lambda_{\mathbb{T}}(g)}(\pt)
\simeq
KR^{\bullet}(\pt)[q^{\pm 1},z_r^{\pm 1}].\qedhere
\]
\end{Ex}

\subsection{Basic properties of $\QR^{\bullet}$}\label{QR:prop}

We describe several important constructions for $\QR^{\bullet}$, including change-of-group isomorphisms, induction and restriction, by combining constructions from $KR$-theory and quasi-elliptic cohomology \cite{huan2018}.

\subsubsection{Relation to quasi-elliptic cohomology}

We begin by proving that $\QR^{\bullet}$ reduces to $\Q^{\bullet}$ for trivial double covers. The analogous statement for $KR$-theory is well-known \cite[Proposition 3.3]{atiyah1966}.

\begin{Prop}
\label{prop:QFromQR}
Let $\mathfrak{X}$ be a local quotient Lie groupoid and $\Gh=\Z_2$ act on $\mathfrak{X} \sqcup \mathfrak{X}$ by swapping the summands. There is an isomorphism $\QR^{\bullet}(\mathfrak{X} \sqcup \mathfrak{X}) \simeq
\Q^{\bullet}(\mathfrak{X})$.
\end{Prop}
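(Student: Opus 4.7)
The plan is to reduce the statement to the well-known fact that $KR$-theory of a free $\Z_2$-double cover recovers the complex $K$-theory of a single sheet (the groupoid version of \cite[Proposition 3.3]{atiyah1966}).

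First I would use the functoriality of the loop groupoid construction (as in the proof of Proposition \ref{prop:reflTwLoopFunctoriality}) to identify $\Lambda(\mathfrak{X} \sqcup \mathfrak{X}) \simeq \Lambda \mathfrak{X} \sqcup \Lambda \mathfrak{X}$ and trace through the induced $\Z_2$-action of Section \ref{sec:invLoopGrpdLie}. Because the deck transformation on $\mathfrak{X} \sqcup \mathfrak{X}$ is the swap of the two summands, the resulting involution on $\Lambda \mathfrak{X} \sqcup \Lambda \mathfrak{X}$ combines loop reflection with exchange of summands, acting schematically as $(\gamma, 0) \mapsto (\bar{\gamma}, 1)$, where $\bar{\gamma}$ denotes the reflected loop and $0, 1$ label the summands. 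This involution has no fixed isomorphism classes, so the double cover $\Lambda \mathfrak{X} \sqcup \Lambda \mathfrak{X} \rightarrow \Lambda_{\pi}^{\refl}((\mathfrak{X} \sqcup \mathfrak{X}) \git \Z_2)$ is \emph{free} in the groupoid sense (its two sheets are disjoint components permuted by $\Z_2$).

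Next I would invoke the local-quotient-groupoid generalisation of the classical $KR$-theory statement: for a free double cover $\mathfrak{Y}_0 \sqcup \mathfrak{Y}_1 \rightarrow \hat{\mathfrak{Y}}$ with $\Z_2$ acting by swap, the restriction to $\mathfrak{Y}_0$ induces an isomorphism
\[
{^{\pi}}K^{\bullet}(\hat{\mathfrak{Y}}) \xrightarrow{\sim} K^{\bullet}(\mathfrak{Y}_0).
\]
The underlying argument is that a $\pi$-twisted vector bundle on $\mathfrak{Y}_0 \sqcup \mathfrak{Y}_1$ is a pair of complex vector bundles $(V_0, V_1)$ together with an antilinear isomorphism identifying $V_1$ with the complex conjugate pullback of $V_0$ under the swap; this data is freely equivalent to prescribing $V_0$ alone. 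Applying this with $\mathfrak{Y}_0 = \Lambda \mathfrak{X}$ yields
\[
\QR^{\bullet}(\mathfrak{X}\sqcup\mathfrak{X}) \;=\; KR^{\bullet}(\Lambda \mathfrak{X} \sqcup \Lambda \mathfrak{X}) \;\simeq\; K^{\bullet}(\Lambda \mathfrak{X}) \;=\; \Q^{\bullet}(\mathfrak{X}),
\]
as required.

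The main potential obstacle is the bookkeeping in the first step: one must verify carefully that the reflection involution acting nontrivially \emph{within} each summand is completely absorbed by the swap \emph{between} summands, so that no residual Real constraint is imposed on the complex $K$-theory of a single copy of $\Lambda \mathfrak{X}$. Once this identification of the involution is pinned down, the second step is essentially formal, reducing via Mayer--Vietoris and the local quotient hypothesis to the classical space-level fact.
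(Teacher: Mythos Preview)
Your approach is essentially identical to the paper's: identify $\Lambda(\mathfrak{X}\sqcup\mathfrak{X})\simeq\Lambda\mathfrak{X}\sqcup\Lambda\mathfrak{X}$, observe that the induced involution swaps the summands (while reflecting loops, i.e.\ inverting the $B\mathbb{T}$-morphisms), and then invoke the groupoid form of \cite[Proposition 3.3]{atiyah1966} to conclude $KR^{\bullet}(\Lambda\mathfrak{X}\sqcup\Lambda\mathfrak{X})\simeq K^{\bullet}(\Lambda\mathfrak{X})$. The paper phrases the intermediate step as an equivalence $\Lambda_{\pi}^{\refl}((\mathfrak{X}\sqcup\mathfrak{X})\git\Z_2)\simeq\Lambda\mathfrak{X}$ rather than emphasising freeness of the double cover, but this is the same observation and your flagged ``obstacle'' is exactly what the paper dispatches in its first sentence.
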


\begin{proof}
Under the equivalence $\Lambda (\mathfrak{X} \sqcup \mathfrak{X}) \simeq
\Lambda \mathfrak{X} \sqcup \Lambda \mathfrak{X}$, the deck transformation of $\Lambda (\mathfrak{X} \sqcup \mathfrak{X})$ corresponds to swapping the summands while inverting the morphisms of $B \mathbb{T}$. In particular, there is an equivalence $\Lambda_{\pi}^{\refl} (\mathfrak{X} \sqcup \mathfrak{X}\git \Z_2) \simeq \Lambda \mathfrak{X}$. It follows that
\[
\QR^{\bullet}(\mathfrak{X} \sqcup \mathfrak{X})
\simeq
KR^{\bullet}(\Lambda \mathfrak{X} \sqcup \Lambda \mathfrak{X})
\simeq
K^{\bullet}(\Lambda \mathfrak{X})
=
\Q^{\bullet}(\mathfrak{X}). \qedhere
\]
\end{proof}

A similar proof verifies the analogue of Proposition \ref{prop:QFromQR} in the setting of Definition \ref{def:QR}, where the twist of $X \git \G \sqcup X \git \G$ is obtained from a twist of $X \git \G$ by putting its inverse twist on the second summand.

\subsubsection{K\"{u}nneth maps} \label{kunnethfmk}
Let $\pi_{\G} :\Gh \rightarrow \Z_2$ and $\pi_{\Hh} : \Hhh \rightarrow \Z_2$ be $\Z_2$-graded finite groups. The pullback $\Gh \times_{\Z_2} \Hhh$ is $\Z_2$-graded with ungraded group $\G \times \Hh$.
is also a pullback, $C^R_{\Gh \times_{\Z_2} \Hhh}(g, h) \simeq C^R_{\Gh}(g) \times_{\Z_2} C^R_{\Hhh}(h)$,
as is its enhanced variant,
$
\Lambda^R_{\Gh \times_{\Z_2} \Hhh}(g, h)
\simeq
\Lambda^R_{\Gh}(g) \times_{\mathsf{O}_2} \Lambda^R_{\Hhh}(h).
$
Pointwise product defines a cochain map
\[
\bigoplus_{n=0}^{\infty}
C^n(B\Gh) \times C^n(B\Hhh)
\rightarrow
\bigoplus_{n=0}^{\infty} C^n(B(\Gh \times_{\Z_2} \Hhh)),
\qquad
(\hat{\alpha}, \hat{\beta}) \mapsto \hat{\alpha} \hat{\beta}.
\]

Let $X$ be a $\Gh$-manifold and $Y$ an $\Hhh$-manifold. Fix $\hat{\alpha} \in Z^3(B \Gh)$ and $\hat{\beta} \in Z^3(B \Hhh)$ and let $(g,h) \in \G \times \Hh$. The K\"{u}nneth morphism for $K$-theory \cite[\S 3.2]{gomi2017} is
\[
{^{\pi}}K^{\bullet+\tilde{\uptau}_{\pi}^{\refl}(\hat{\alpha})}_{\Lambda^R_{\Gh}(g)}(X^g)\otimes_{\Z} {^{\pi}}K^{\bullet+\tilde{\uptau}_{\pi}^{\refl}(\hat{\beta})}_{\Lambda^R_{\Hhh}(h)}(Y^h)
\rightarrow
{^{\pi}}K^{\bullet+\tilde{\uptau}_{\pi}^{\refl}(\hat{\alpha}\hat{\beta})}_{\Lambda^R_{\Gh \times_{\Z_2} \Hhh}(g,h)}((X \times Y)^{(g,h)}),
\]
where, for ease of notation, we have written $\pi$ for all $\Z_2$-gradings. This map descends to a morphism of $KR^{\bullet}_{\mathbb{T}}(\pt)$-modules
\begin{equation}
\label{eq:basicKunn}
{^{\pi}}K^{\bullet+\tilde{\uptau}_{\pi}^{\refl}(\hat{\alpha})}_{\Lambda^R_{\Gh}(g)}(X^g)\otimes_{KR^{\bullet}_{\mathbb{T}}(\pt)}{^{\pi}}K^{\bullet+\tilde{\uptau}_{\pi}^{\refl}(\hat{\beta})}_{\Lambda^R_{\Hhh}(h)}(Y^h)
\rightarrow
{^{\pi}}K^{\bullet+\tilde{\uptau}_{\pi}^{\refl}(\hat{\alpha}\hat{\beta})}_{\Lambda^R_{\Gh \times_{\Z_2} \Hhh}(g,h)}((X \times Y)^{(g,h)}),
\end{equation}
where we view complex $K$-theory groups as $KR^{\bullet}_{\mathbb{T}}(\pt)$-modules via the forgetful map.

Define
\begin{multline*}
\QR_{\G}^{\bullet + \hat{\alpha}}(X) \hat{\otimes}_{KR^{\bullet}_{\mathbb{T}}(\pt)} \QR_{\Hh}^{\bullet + \hat{\beta}}(Y)
:= \\
\prod_{\substack{g \in \pi_0(\G \git_R \Gh) \\ h \in \pi_0(\Hh \git_R \Hhh)}} {^{\pi}}K^{\bullet+\tilde{\uptau}_{\pi}^{\refl}(\hat{\alpha})}_{\Lambda^R_{\Gh}(g)}(X^g)\otimes_{KR^{\bullet}_{\mathbb{T}}(\pt)}{^{\pi}}K^{\bullet+\tilde{\uptau}_{\pi}^{\refl}(\hat{\beta})}_{\Lambda^R_{\Hhh}(h)}(Y^h).
\end{multline*}
Observe that there is a natural inclusion
\begin{equation}
\label{eq:RealConjIncl}
\pi_0(\G \git_R \Gh) \times \pi_0(\Hh \git_R \Hhh)
\hookrightarrow
\pi_0(\G \times \Hh \git_R \Gh \times_{\Z_2} \Hhh).
\end{equation}
The K\"{u}nneth map for Real quasi-elliptic cohomology
is defined to be the composition
\begin{eqnarray*}
\QR_{\G}^{\bullet + \hat{\alpha}}(X) \hat{\otimes}_{KR^{\bullet}_{\mathbb{T}}(\pt)} \QR_{\Hh}^{\bullet + \hat{\beta}}(Y)
&\xrightarrow[]{\eqref{eq:basicKunn}}&
\prod_{\substack{g \in \pi_0(\G \git_R \Gh) \\ h \in \pi_0(\Hh \git_R \Hhh)}} {^{\pi}}K^{\bullet+\tilde{\uptau}_{\pi}^{\refl}(\hat{\alpha}\hat{\beta})}_{\Lambda^R_{\Gh \times_{\Z_2} \Hhh}(g,h)}((X \times Y)^{(g,h)}) \\
& \xhookrightarrow[]{\eqref{eq:RealConjIncl}} &
\prod_{\pi_0(\G \times \Hh \git_R \Gh \times_{\Z_2} \Hhh)} {^{\pi}}K^{\bullet+\tilde{\uptau}_{\pi}^{\refl}(\hat{\alpha}\hat{\beta})}_{\Lambda^R_{\Gh \times_{\Z_2} \Hhh}(g,h)}((X \times Y)^{(g,h)}) \\
&=&
\QR_{\G \times \Hh}^{\bullet + \hat{\alpha} \hat{\beta}}(X\times Y).
\end{eqnarray*}

The next result is a Real generalization of \cite[Proposition 3.19]{huan2018}.

\begin{Prop}
\label{QEllRcog}
Let $\Gh$ be a $\Z_2$-graded compact Lie group with $\Z_2$-graded closed subgroup $i: \Hhh \hookrightarrow \Gh$. Fix $\hat{\alpha} \in Z^3(B \Gh)$ and let $\Hhh$ act on a space $X$. Then there is a change-of-group isomorphism
\begin{equation}
\label{eq:cgicqellR}
\rho^{\Gh}_{\Hhh}:
\QR^{\bullet + \hat{\alpha}}_{\G}(X\times_{\Hhh} \Gh)\xrightarrow[]{\sim} \QR^{\bullet + i^*\hat{\alpha}}_{\Hh}(X).
\end{equation}
\end{Prop}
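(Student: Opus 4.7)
The plan is to realize $\rho^{\Gh}_{\Hhh}$ as induced by a $B \Z_2$-graded weak equivalence of loop groupoids, together with naturality of twisted transgression and the invariance of twisted $KR$-theory under weak equivalence (the same strategy used in the proof of Proposition~\ref{prop:QRFunctoriality}).

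First, I would construct a $B \Z_2$-graded weak equivalence
\[
\Phi : X \git \Hhh \xrightarrow{\sim} (X \times_{\Hhh} \Gh) \git \Gh,
\qquad
x \mapsto [x, e],\quad (x,\eta) \mapsto ([x,e],\eta).
\]
This is the standard Morita equivalence for induced actions; it is $B \Z_2$-graded because $i : \Hhh \hookrightarrow \Gh$ respects $\pi$, so $\pi$ pulls back correctly along $\Phi$. Applying the constructions of Section~\ref{sec:invLoopGrpdLie} and invoking Proposition~\ref{prop:reflTwLoopFunctoriality}, I obtain a $B\Z_2$-graded weak equivalence
\[
\Lambda \Phi : \Lambda_{\pi}^{\refl}(X \git \Hhh) \xrightarrow{\sim} \Lambda_{\pi}^{\refl}\bigl((X \times_{\Hhh} \Gh) \git \Gh\bigr)
\]
covered by the analogous weak equivalence $\Lambda(X \git \Hh) \simeq \Lambda((X \times_{\Hhh}\Gh) \git \G)$ of double covers.

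Next I would verify the compatibility of twists. The naturality of the correspondence \eqref{diag:oriLoopDiagGrpd} under the pair $(i, \Phi)$ gives a commutative diagram
\[
\begin{tikzcd}
C^{\bullet}(B \Gh) \arrow{r}[above]{\tilde{\uptau}^{\refl}_{\pi}} \arrow{d}[left]{i^*} & C^{\bullet-1+\pi}(\Lambda_{\pi}^{\refl}((X \times_{\Hhh}\Gh) \git \Gh)) \arrow{d}[right]{(\Lambda \Phi)^*} \\
C^{\bullet}(B \Hhh) \arrow{r}[below]{\tilde{\uptau}^{\refl}_{\pi}} & C^{\bullet-1+\pi}(\Lambda_{\pi}^{\refl}(X \git \Hhh)),
\end{tikzcd}
\]
using the explicit cochain formula for $\tilde{\uptau}^{\refl}_{\pi}$ recalled in Section~\ref{sec:transMaps}. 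Consequently $(\Lambda \Phi)^* \tilde{\uptau}^{\refl}_{\pi}(\hat{\alpha}) = \tilde{\uptau}^{\refl}_{\pi}(i^*\hat{\alpha})$ as twisted cochains, so the two $\pi$-twists coincide under pullback.

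Finally, I would invoke the invariance of twisted $KR$-theory (twisted $K$-theory of $B\Z_2$-graded local quotient groupoids) under weak equivalence from \cite[\S 3.1]{gomi2017}. Applying this to $\Lambda \Phi$ produces the desired isomorphism
\[
\rho^{\Gh}_{\Hhh} = (\Lambda \Phi)^* :
KR^{\bullet + \tilde{\uptau}_{\pi}^{\refl}(\hat{\alpha})}\bigl(\Lambda((X \times_{\Hhh}\Gh) \git \G)\bigr)
\xrightarrow{\sim}
KR^{\bullet + \tilde{\uptau}_{\pi}^{\refl}(i^*\hat{\alpha})}(\Lambda(X \git \Hh)),
\]
which by Definition~\ref{def:QR} is exactly \eqref{eq:cgicqellR}. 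The main obstacle is the twist compatibility step: while the cochain-level naturality of $\tilde{\uptau}^{\refl}_{\pi}$ is formal from the push-pull description, one must check that the identification of $\Lambda_{\pi}^{\refl}$-components along $\Lambda \Phi$ correctly matches Real conjugacy classes of $\Hh$ with those Real conjugacy classes of $\G$ meeting $\Hh$, so that the twisted cocycles agree on the nose (not merely cohomologously) on each component of the skeleton \eqref{eq:conjClassDecompLambdaUnori}. Once this bookkeeping is done, the remainder of the argument is formal.
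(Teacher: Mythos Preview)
Your argument is correct and conceptually clean, but it differs from the paper's proof in a meaningful way.

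The paper defines $\rho^{\Gh}_{\Hhh}$ concretely as the composite of restriction along $i:\Hhh\hookrightarrow\Gh$ with pullback along $j:X\to X\times_{\Hhh}\Gh$, $x\mapsto[x,e]$, and then \emph{verifies} that this is an isomorphism by decomposing both sides component-wise over Real conjugacy classes. The key computation is the $\Lambda^R_{\Gh}(g)$-equivariant identification
\[
(X\times_{\Hhh}\Gh)^{g}\simeq\bigsqcup_{h} X^{h}\times_{\Lambda^R_{\Hhh}(h)}\Lambda^R_{\Gh}(g),
\]
where $h$ runs over Real $\Hhh$-conjugacy classes that are Real $\Gh$-conjugate to $g$; the map then becomes a product of change-of-group isomorphisms for twisted $KR$-theory on each component.

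Your route---Morita equivalence $X\git\Hhh\simeq(X\times_{\Hhh}\Gh)\git\Gh$, functoriality of $\Lambda_{\pi}^{\refl}$ (Proposition~\ref{prop:reflTwLoopFunctoriality}), naturality of $\tilde{\uptau}_{\pi}^{\refl}$ under $i$, and invariance of twisted $KR$ under weak equivalence---bypasses the fixed-point decomposition entirely. One small correction: your commuting square for transgression should have targets $C^{\bullet-1+\pi}(\mathcal{L}_{\pi}^{\refl}B\Gh)$ and $C^{\bullet-1+\pi}(\mathcal{L}_{\pi}^{\refl}B\Hhh)$, with the twists on the loop groupoids of the spaces obtained by further pullback along the structure maps to $B\Gh$ and $B\Hhh$; once stated this way the naturality is immediate from the cochain formula.

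What each approach buys: your argument is shorter and makes the isomorphism structural, but the paper's explicit component-wise description is not incidental---it is reused immediately afterward to compute the induction map $\mathcal{IR}^{\Gh}_{\Hhh}$ (the formula for ${\rho^{\Gh}_{\Hhh}}^{-1}$ in terms of Real conjugacy representatives is exactly what feeds into equation~\eqref{eq:indQR} and the transfer ideal computation). So your proof establishes the proposition, but you would still need the paper's decomposition for the applications that follow.
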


\begin{proof}
Define the \eqref{eq:cgicqellR} as the composition
\begin{equation}
\label{cgicqellR}
\rho^{\Gh}_{\Hhh}:
\QR^{\bullet}_{\G}(X\times_{\Hhh} \Gh)\xrightarrow[]{\Res}
\QR_{\Hh}^{\bullet}(X\times_{\Hhh} \Gh)\buildrel{j^*}\over\rightarrow \QR^{\bullet}_{\Hh}(X). 
\end{equation}
where the first map is restriction along $i:\Hhh\hookrightarrow \Gh$ and the second is pullback along the $\Hhh$-equivariant inclusion $j: X\rightarrow X\times_{\Hhh} \Gh$, $x\mapsto [x, e]$. Geometrically, the map \eqref{cgicqellR} sends a twisted Real $\G$-equivariant bundle $V$ over $\Gh\times_{\Hhh}X$ to its restriction to $e\times_{\Hhh} X\simeq X \git \Hhh$. For each $g \in \G$, there is an equivalence
\begin{equation}
\label{eq:RealdecompfixGH}
(X\times_{\Hhh} \Gh)^{g} \simeq \bigsqcup_{h} X^{h} \times_{C_{\Hhh}^R(h)} C^R_{\Gh}(g)  \simeq \bigsqcup_{h}  X^{h} \times_{\Lambda^R_{\Hhh}(h)} \Lambda^R_{\Gh}(g),
\end{equation} 
where $h$ runs over elements of $\pi_0(\Hh\git_R \Hhh)$ which are Real $\Gh$-conjugate to $g$. Note that each $h \in \pi_0(\Hh \git_R \Hhh)_{\pm 1}$ is Real $\Gh$-conjugate to a unique $g_h \in \pi_0(\G \git_R \Gh)_{\pm 1}$. With this, the map \eqref{cgicqellR} can be written as the composition
\begin{multline*}
\QR^{\bullet + \hat{\alpha}}_{\G}(X\times_{\Hhh} \Gh) \rightarrow 
\prod_{g \in \pi_0(\G \git_R \Gh)} \prod_{h} KR^{\bullet + \tilde{\uptau}_{\pi}^{\refl}(\hat{\alpha})_{i(h)}}_{\Lambda_{\G}(h)}( X^{h} \times_{\Lambda^R_{\Hhh}(h)} \Lambda^R_{\Gh}(g))  \\
\xrightarrow[]{\prod_g \prod_h \rho^{\Lambda^R_{\Gh}(h)}_{\Lambda^R_{\Hhh}(h)}}  \prod_{h \in \pi_0(\Hh \git_R \Hhh)} KR^{\bullet + \tilde{\uptau}_{\pi}^{\refl}(i^*\hat{\alpha})_h}_{\Lambda_{\Hh}(h)}(X^{h})
= \QR^{\bullet + i^* \hat{\alpha}}_{\Hh} (X),
\end{multline*}
where the first map is induced by the equivalence \eqref{eq:RealdecompfixGH} and the second is a product of change-of-group isomorphisms for twisted $KR$-theory. This composition is an isomorphism then follows from the observations at the beginning of the proof.
\end{proof}

\subsubsection{Induction maps}

In \cite{strickland1998}, the quotient of the Morava $E$-theory of the symmetric group by a transfer ideal, generated by the image of the induction maps $E^0(B\Sigma_i\times\Sigma_{n-i}) \rightarrow E^0(B\Sigma_n)$, $0<i<n$, with $n=p^k$ for some prime $p$, is shown to classify subgroups of order $p^k$ of the formal group of Morava $E$-theory. In view of the fact that the second Morava $E$-theory is a form of elliptic cohomology, the question arises of whether the finite subgroups of an elliptic curve are classified by the quotient of the corresponding elliptic cohomology of the symmetric group by a transfer ideal. Results suggesting an affirmative answer to this question come from generalized Morava $E$-theory \cite{schlank2014} and quasi-elliptic cohomology and Tate $K$-theory \cite{huan2018b}. Moreover, transfer plays an important role in understanding additive properties of power operations and interacts nicely with Hopkins--Kuhn--Ravenel character theory \cite{hopkins2000}. This section is motivated by the possibility of Real analogues of these results. Our approach to induction and transfer is motivated by the approach for $K$-theory given in \cite{rezk2006}. 
 
Let $\G$ be a compact Lie group with Real structure $\Gh$. Associated to a finite covering of $\Gh$-spaces $f: X\rightarrow Y$ is the pushforward $f_!: KR^{\bullet}_{\G}(X) \rightarrow KR^{\bullet}_{\G}(Y)$. Explicitly, if $V \rightarrow X$ is a Real $\G$-equivariant vector bundle, then $f_!V\rightarrow Y$ is the vector bundle with fibers \[(f_!V)_y=\prod_{x\in f^{-1}(y)} V_x,
\qquad
y \in Y
\]
and Real $\G$-equivariant structure
\begin{equation*}
\label{gactionfincoverKR}
\omega \cdot (v_x)_{x\in f^{-1}(y) } = (v_{x^{\prime} \omega^{-1}} \omega)_{x^{\prime}\in f^{-1}(y\omega)},
\qquad
\omega \in \Gh, \; y\in  Y.
\end{equation*}
In particular, if $\Gh$ is finite with $\Z_2$-graded subgroup $i:\Hhh \hookrightarrow \Gh$ and $\hat{\theta} \in Z^{2+\pi}(B \Gh)$, then the finite $\Gh$-equivariant covering $f: X\times_{\Hhh} \Gh\rightarrow X$ defines the Real induction map
\begin{equation*}
\RInd^{\G}_{\Hh}: KR^{\bullet + i^*\hat{\theta}}_{\Hh}(X) \xrightarrow[]{\sim} KR^{\bullet + \hat{\theta}}_{\G}(X\times_{\Hhh} \Gh)
\xrightarrow[]{f_!} 
KR^{\bullet + \hat{\theta}}_{\G}(X).
\end{equation*}

Using this, define induction for $\QR^{\bullet}$ as the composition
\begin{equation}
\label{eq:indQR}
\mathcal{IR}^{\Gh}_{\Hhh}: \QR^{\bullet + i^* \hat{\alpha}}_{\Hh}(X) \xrightarrow[]{{\rho_{\Hhh}^{\Gh}}^{-1}} \QR^{\bullet+\hat{\alpha}}_{\G}(X\times_{\Hhh} \Gh)
\rightarrow \QR^{\bullet+\hat{\alpha}}_{\G}(X),
\end{equation}
where the second map is induced by the finite covering
\[
\Lambda^{\refl}_{\pi} ((X\times_{\Hhh} \Gh)\git \Gh) \rightarrow \Lambda^{\refl}_{\pi} ( X \git \Gh )
\]
given on objects and morphisms by $(\sigma, [x, g]) \mapsto (\sigma, xg)$ and $([g^{\prime}, t], (\sigma, [x, g])) \mapsto ([g^{\prime}, t], (\sigma, xg))$, respectively. 

The composition \eqref{eq:indQR} can be computed as follows. We focus on the untwisted case. Let
\[
a=\prod_{h\in \pi_0(\Hh\git_R \Hhh)} a_{h}
\in
\QR^{\bullet}_{\Hh}(X),
\]
where we have used the decomposition \eqref{eq:QRDecomp}. Using the explicit formula for the map $\rho_{\Hhh}^{\Gh}$ given in the proof of Proposition \ref{QEllRcog}, the first map of \eqref{eq:indQR} sends $a$ to
\[
\prod_{g\in \pi_0(\G\git_R \Gh) }\prod_{\omega}  a_{\omega g^{\pi(\omega)}\omega^{-1}}\cdot \omega \in  \QR^{\bullet}_{\G}(X\times_{\Hhh} \Gh)
\]
where $\omega$ runs over a set of representatives of $(\Gh\slash\Hhh)^g$. Following \cite[\S 7.2]{huan2018b}, the second map of \eqref{eq:indQR} sends this image to\[
\prod_{g\in \pi_0(\G\git_R \Gh) }\sum_\omega  a_{\omega g^{\pi(\omega)}\omega^{-1}}\cdot \omega.
\]
Then $\mathcal{IR}^{\Gh}_{\Hhh}(a)_{g}$ is equal to
\begin{equation*}
\begin{cases}\RInd^{\Lambda^R_{\Gh}(h)}_{\Lambda^R_{\Hhh}(h)}(a_{h}) &\mbox{if  }g \in \pi_0(\G\git\G)_{-1}\mbox{ is Real conjugate to some  }h \in \pi_0(\Hh\git \Hh )_{-1},\\ 
\Ind^{\Lambda_{\G}(h)}_{\Lambda_{\Hh}(h)}(a_{h}) &\mbox{if  }g \in \pi_0(\G\git\G)_{+1}\mbox{ is Real conjugate to some   }h  \in \pi_0(\Hh\git \Hh)_{+1},\\0 &\mbox{otherwise}.
\end{cases}
\end{equation*}
\begin{Def} 
The \emph{transfer ideal} of $\QR^0_{\G}(\pt)$ is

\[
\mathcal{IR}_{\G}:= \sum_{\Hhh} \im \left( \mathcal{IR}^{\Gh}_{\Hhh}:
\QR^0_{\Hh}(\pt)\rightarrow
\QR^0_{\G}(\pt) \right),
\]
where the sum runs over all $\Z_2$-graded subgroups $\Hhh \leq \Gh$ such that $\Hh \neq \G$.
\end{Def}

The transfer ideal can be computed in terms of Real and complex representation rings as
\[\mathcal{IR}_{\G}\simeq\prod_{g \in \pi_0(\G \git \G)_{-1}}\sum_{\Hhh}   \im(\RInd^{\Lambda_{\G}(g)}_{\Lambda_{\Hh}(g_{\Hh})} )
\times
\prod_{g \in \pi_0(\G \git \G)_{+1} \slash \Z_2}
\sum_{\Hh^{\prime}} \im ( \Ind^{\Lambda_{\G}(g)}_{\Lambda_{\Hh^{\prime}} (g_{\Hh^{\prime}})} ).\]
Here $\Hhh$ (resp. $\Hh^{\prime}$) runs over all proper $\Z_2$-graded subgroups of $\Gh$ (resp. proper subgroups of $\G$) such that $g$ is Real $\Gh$-conjugate to some $g_{\Hhh^{\prime}} \in \pi_0(\Hh \git \Hh)_{-1}$ (resp. $g_{\Hhh^{\prime}} \in \pi_0(\Hh \git \Hh)_{+1}
\slash \Z_2$). It follows that
\begin{multline*}
\QR^{\bullet}_{\G}(\pt)\slash  \mathcal{IR}_{\G} \simeq
\prod_{g \in \pi_0(\G \git \G)_{-1}}
RR(\Lambda_{\G}(g)) \slash \big( \sum_{\Hh} \sum_{g_{\Hhh}}\im(\RInd^{\Lambda_{\G}(g)}_{\Lambda_{\Hh}(g_{\Hhh})} )\big) \times \\
\prod_{g \in \pi_0(\G \git \G)_{+1} \slash \Z_2}
R(\Lambda_{\G}(g)) \slash 
\big(\sum_{\Hh^{\prime}} \sum_{g_{\Hh^{\prime}}} \im ( \Ind^{\Lambda_{\G}(g)}_{\Lambda_{\Hh^{\prime}} (g_{\Hh^{\prime}} )} \big).
\end{multline*}

\subsection{Real quasi-elliptic cohomology and Tate $KR$-theory}
\label{sec:RealTateKThy}

We relate Real quasi-elliptic cohomology to a Real version of Tate $K$-theory, in much the same way that quasi-elliptic cohomology is related to Tate $K$-theory \cite{huan2018,dove2019,huan2020}. See \cite[\S 2]{ando2001}, \cite[\S 2]{ganter2013} for background on Tate $K$-theory.

\begin{Def}\label{def:twistedRealTateK}
Let a $\Z_2$-graded finite group $\Gh$ act on a manifold $X$ and $\hat{\alpha} \in Z^3(B \Gh)$. The \emph{$\hat{\alpha}$-twisted Tate $K$-theory} of $X \git \Gh$ is the subgroup
\[
{^{\pi}}K_{\Tate}^{\bullet + \hat{\alpha}}(X \git \Gh)\subset
\prod_{g \in \pi_0(\G \git_R \Gh)}{^{\pi}}K^{\bullet + \hat{\theta}_g}_{C^R_{\Gh}(g)}(X^g)\Lser{q^{\frac{1}{|g|}}}
\]
whose $g$\textsuperscript{th} component consists of formal $q$-Laurent series $\bigoplus_{k \in \Z} V_k q^{\frac{k}{|g|}}$ which satisfy the following \emph{rotation condition}: for each $k \in \Z$, the coefficient $V_k$ is a $(\pi,\hat{\theta}_g)$-twisted vector bundle on $X^g \git  C_{\Gh}^R(g) $ on which $g$ acts by multiplication by $e^{\frac{2 \pi i k}{\vert g \vert}}$.
\end{Def}

When $\Gh$ is non-trivially $\Z_2$-graded, write $KR_{\Tate}^{\bullet + \hat{\alpha}}(X \git \G)$ for ${^{\pi}}K_{\Tate}^{\bullet + \hat{\alpha}}(X \git \Gh)$.

\begin{Ex}
When the $\Z_2$-grading $\Gh$ is trivial, Definition \ref{def:twistedRealTateK} recovers the twisted equivariant Tate $K$-theory of \cite[Definition 6.23]{dove2019}.
\end{Ex}

\begin{Ex}
When $\Gh = \Z_2$, we have $KR_{\Tate}^{\bullet}(X) \simeq KR^{\bullet}(X) \Lser{q}$. In particular, if $\Gh$ acts trivially on $X$, then $KR_{\Tate}^{\bullet}(X) \simeq KO^{\bullet}(X) \Lser{q}$, a well-known group in topology, appearing, for example, in the context of the Witten genus \cite{ando2000}.
\end{Ex}

Fix $g \in \G$ and set $\hat{\theta}_g= \tilde{\uptau}_{\pi}^{\refl}(\hat{\alpha})_g$. The $\Z_2$-graded group homomorphism
\[
(\mathbb{R} \slash \vert g \vert \Z) \rtimes_{\pi} C_{\Gh}^R(g) \twoheadrightarrow \Lambda_{\Gh}^R(g),
\qquad
([t], \omega) \mapsto [(t, \omega)]
\]
induces a ring homomorphism
\begin{equation}
\label{eq:QRvsKRTate}
{^{\pi}}K^{\bullet + \hat{\theta}_g}_{\Lambda^R_{\Gh}(g)}(X^g) \rightarrow {^{\pi}}K^{\bullet + \hat{\theta}_g}_{(\mathbb{R} \slash \vert g \vert \Z) \rtimes_{\pi} C^R_{\Gh}(g)}(X^g)
\end{equation}
whose image is generated by $(\pi, \hat{\theta}_g)$-twisted vector bundles on $X^g \git ( \mathbb{R} \slash \vert g \vert \Z \rtimes_{\pi} C^R_{\Gh}(g) )$ on which $(-1,g)$ acts trivially. Since the normal subgroup $\mathbb{R} \slash \vert g \vert \Z$ acts trivially on $X^g$, there is an isomorphism
\[
{^{\pi}}K^{\bullet + \hat{\theta}_g}_{(\mathbb{R} \slash \vert g \vert \Z) \rtimes_{\pi} C^R_{\Gh}(g)}(X^g)
\simeq
{^{\pi}}K^{\bullet + \hat{\theta}_g}_{C^R_{\Gh}(g)}(X^g)[q^{\pm\frac{1}{\vert g \vert}}],
\]
as is seen by replacing $\mathbb{T} = \mathbb{R} \slash \Z$ with $\mathbb{R} \slash \vert g \vert \Z$ in the isomorphism \eqref{eq:MackeyDecompExample}. In this way, the map \eqref{eq:QRvsKRTate} becomes a ring homomorphism
\[
{^{\pi}}K^{\bullet + \hat{\theta}_g}_{\Lambda^R_{\Gh}(g)}(X^g) \rightarrow {^{\pi}}K^{\bullet + \hat{\theta}_g}_{C^R_{\Gh}(g)}(X^g)[q^{\pm\frac{1}{\vert g \vert}}].
\]

\begin{Thm}
\label{thm:TateVsQR}
Assume that $\Gh$ is non-trivially $\Z_2$-graded. There is an isomorphism
\[
KR_{\Tate}^{\bullet + \hat{\alpha}}(X \git \G)
\simeq
\QR^{\bullet + \hat{\alpha} }(X \git \G) \otimes_{KR^{\bullet}(\pt)[q^{\pm 1}]} KR^{\bullet}(\pt)\Lser{q}.
\]
\end{Thm}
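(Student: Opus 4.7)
The plan is to verify the claimed isomorphism summand-by-summand, using the decomposition of Proposition \ref{prop:QRGlobQuot} for the left hand side and the corresponding factor-wise decomposition of $KR_{\Tate}^{\bullet+\hat\alpha}(X\git\G)$ from Definition \ref{def:twistedRealTateK}. Since $KR^\bullet(\pt)\Lser{q}$ is flat (indeed free) over $KR^\bullet(\pt)[q^{\pm 1}]$, it is enough to exhibit, for each $g\in\pi_0(\G\git_R\Gh)$, a natural $KR^\bullet(\pt)[q^{\pm 1}]$-module isomorphism between ${^{\pi}}K^{\bullet+\hat\theta_g}_{\Lambda_{\Gh}^R(g)}(X^g)$ and the subgroup of Laurent \emph{polynomials} in $q^{\pm 1/|g|}$ satisfying the rotation condition, and then tensor up to Laurent series.

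The key step is identifying the image of the map \eqref{eq:QRvsKRTate}. Pulling back along the surjection $\mathbb{R}\slash|g|\Z\rtimes_\pi C_{\Gh}^R(g)\twoheadrightarrow \Lambda_{\Gh}^R(g)$ from the bottom row of diagram \eqref{diag:finiteOrderRealConstLoop} identifies ${^{\pi}}K^{\bullet+\hat\theta_g}_{\Lambda_{\Gh}^R(g)}(X^g)$ with the subring of ${^{\pi}}K^{\bullet+\hat\theta_g}_{\mathbb{R}\slash|g|\Z\rtimes_\pi C_{\Gh}^R(g)}(X^g)$ on which the kernel $\Z_{|g|}\simeq \langle (-[1/|g|],g)\rangle$ acts trivially. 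Applying the Mackey-type argument \eqref{eq:MackeyDecompExample} (with $\mathbb{R}\slash|g|\Z$ in place of $\mathbb{R}\slash\Z$, and using that all irreducible representations of $\mathbb{R}\slash|g|\Z$ admit a Real structure for the orthogonal $\Z_2$-action, by the analogue of Example \ref{TorthoRR}), one obtains
\[
{^{\pi}}K^{\bullet+\hat\theta_g}_{\mathbb{R}\slash|g|\Z\rtimes_\pi C_{\Gh}^R(g)}(X^g)\simeq {^{\pi}}K^{\bullet+\hat\theta_g}_{C_{\Gh}^R(g)}(X^g)[q^{\pm 1/|g|}],
\]
the variable $q^{1/|g|}$ corresponding to the generating character of $\mathbb{R}\slash|g|\Z$. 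Under this isomorphism, a weight-$k$ summand $V_k\cdot q^{k/|g|}$ is invariant under $(-[1/|g|],g)$ exactly when $g$ acts on $V_k$ by multiplication by $e^{2\pi i k/|g|}$, which is the rotation condition of Definition \ref{def:twistedRealTateK}.

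Consequently ${^{\pi}}K^{\bullet+\hat\theta_g}_{\Lambda_{\Gh}^R(g)}(X^g)$ is the subring of $q^{\pm 1/|g|}$-Laurent polynomials satisfying the rotation condition, while the $g$th factor of $KR_{\Tate}^{\bullet+\hat\alpha}(X\git\G)$ is the analogous ring of $q^{\pm 1/|g|}$-Laurent series. Since $KR^\bullet(\pt)[q^{\pm 1/|g|}]$ is a free $KR^\bullet(\pt)[q^{\pm 1}]$-module with basis $\{1,q^{1/|g|},\dots,q^{(|g|-1)/|g|}\}$ and the rotation condition selects the weight summand $k\bmod |g|$ independently in each basis direction, tensoring with $KR^\bullet(\pt)\Lser{q}$ over $KR^\bullet(\pt)[q^{\pm 1}]$ replaces Laurent polynomials in $q^{1/|g|}$ by Laurent series in $q^{1/|g|}$ while preserving the rotation condition, yielding the $g$th factor of $KR_{\Tate}^{\bullet+\hat\alpha}(X\git\G)$. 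Assembling over all $g$ gives the claimed isomorphism.

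The main obstacle is bookkeeping compatibility of the twist $\hat\theta_g$ through the successive operations (pullback along $\mathbb{R}\slash|g|\Z\rtimes_\pi C_{\Gh}^R(g)\twoheadrightarrow \Lambda_{\Gh}^R(g)$, the Mackey decomposition, and identification of rotation weights), which amounts to checking that $\hat\theta_g$ restricts trivially to the central copies of $\Z_{|g|}$ and $\mathbb{R}\slash|g|\Z$ and to applying Proposition \ref{prop:nuRestrict}; the subsequent tensor-product statement is then purely algebraic.
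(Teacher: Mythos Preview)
Your proposal is correct and follows essentially the same approach as the paper: the paper's proof consists of the single sentence ``This follows from the previous discussion and the isomorphism \eqref{eq:QRDecomp},'' where the ``previous discussion'' is precisely the identification of the image of \eqref{eq:QRvsKRTate} via the Mackey decomposition \eqref{eq:MackeyDecompExample} (with $\mathbb{R}\slash|g|\Z$ replacing $\mathbb{T}$) as the Laurent polynomials satisfying the rotation condition. You have simply spelled out in detail what the paper leaves to the reader, including the flatness observation and the explicit matching of the rotation condition with triviality of the $(-1,g)$-action.
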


\begin{proof}
This follows from the previous discussion and the isomorphism \eqref{eq:QRDecomp}. 
\end{proof}

The above discussion can be adapted to the case of a $\Z_2$-graded compact Lie group with trivial twist. In particular, the analogue of Theorem \ref{thm:TateVsQR} holds.

\subsection{The twisted elliptic Pontryagin character}
\label{sec:ellPhChar}

We construct a character map for Real quasi-elliptic cohomology using the Pontryagin character for $KR$-theory.

We first recall the twisted Chern and Pontryagin characters for $K$-theory. Let $\G$ be a finite group acting on a compact manifold $X$ and $\theta \in Z^2(B \G)$. Then $\uptau(\theta)$ determines $\uptau(\theta)([-]g) \in Z^1(B C_{\G}(g))$, which we interpret as a one dimensional representation $\mathbb{C}_{\uptau(\theta)([-]g)}$ of $C_{\G}(g)$. The twisted equivariant Chern character
\[
\ch^{\theta} : K^{i + \theta}_{\G}(X) \otimes_{\Z} \mathbb{C}
\xrightarrow[]{\sim}
\bigoplus_{g \in \pi_0(\G \git \G)} \left( H^{2 \bullet + i}(X^g) \otimes_{\Z} \mathbb{C}_{\uptau(\theta)([-]g)} \right)^{C_{\G}(g)},
\qquad
i=0,1
\]
can be seen as the composition of twisted Atiyah--Segal localization (equation \eqref{eq:twistedRealAtiyahSegal}) and the ordinary Chern character. Similarly, if $\Gh$ is a Real structure on $\G$ and $\hat{\theta} \in Z^{2+\pi}(B \Gh)$, there is a twisted equivariant Pontryagin character
\[
\ph^{\hat{\theta}} : KR^{\bullet + \hat{\theta}}_{\G}(X) \otimes_{\Z} \mathbb{C}
\rightarrow
\bigoplus_{g \in \pi_0(\G \git_R \Gh)} \left( H^{2 \bullet}(X^g) \otimes_{\Z} \mathbb{C}_{\uptau_{\pi}^{\refl}(\hat{\theta})([-]g)} \right)^{C^R_{\Gh}(g)}.
\]
When $X$ is a point, the restriction of $\ph^{\hat{\theta}}$ to degree zero is the Real character map and is in fact an isomorphism \cite[Theorem 3.10]{noohiyoung2022}.

Returning to the elliptic setting, let $\Gh$ act on $X$ and $\hat{\alpha} \in Z^{3}(B \Gh)$. Set $\hat{\theta}_g =\tilde{\uptau}_{\pi}^{\refl}(\hat{\alpha})_g$. For each $h \in \G$, the cocycle $\uptau_{\pi}^{\refl}(\hat{\theta}_g) \in Z^1(\mathcal{L}_{\pi}^{\refl} B C_{\Gh}^R(g))$ determines a one dimensional representation $\mathbb{C}_{\uptau_{\pi}^{\refl}(\hat{\theta}_g)([-]h)}$ of $C_{\Gh}^R(g,h)$.

\begin{Thm}
\label{thm:QEllRChern}
There is a homomorphism of abelian groups
\[
\ph^{\hat{\alpha}}:
\QR^{\bullet + \hat{\alpha}}(X \git \G) \otimes_{\Z} \mathbb{C}
\rightarrow
\bigoplus_{(g, h) \in \pi_0(\G^{(2)} \git_R \Gh)}
\left(
H^{\bullet}(X^{g,h}) \otimes_{\mathbb{C}} \mathbb{C}_{\uptau_{\pi}^{\refl}(\hat{\theta}_g)([-]h)} [q^{\pm 1}] \right)^{C^R_{\Gh}(g,h)}
\]
where $X^{g,h}$ is the joint fixed point set of $g, h \in \G$.
\end{Thm}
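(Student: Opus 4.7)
The plan is to combine the decomposition of $\QR^{\bullet + \hat{\alpha}}$ from Proposition \ref{prop:QRGlobQuot} with the twisted equivariant Pontryagin character for $KR$-theory, applied to each factor. Starting from
\[
\QR^{\bullet + \hat{\alpha}}(X \git \G) \otimes_{\Z} \mathbb{C} \simeq \prod_{g \in \pi_0(\G \git_R \Gh)}{^{\pi}}K^{\bullet + \hat{\theta}_g}_{\Lambda^R_{\Gh}(g)}(X^g) \otimes_{\Z} \mathbb{C}
\]
with $\hat{\theta}_g = \tilde{\uptau}_\pi^{\refl}(\hat{\alpha})_g$, I would first pull back along the surjection $\mathbb{T} \rtimes_{\pi} C^R_{\Gh}(g) \twoheadrightarrow \Lambda^R_{\Gh}(g)$ of diagram \eqref{diag:finiteOrderRealConstLoop}; rationally this pullback is injective since the kernel is the finite group $\Z_{\vert g \vert}$. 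Because $\mathbb{T}$ acts trivially on $X^g$, the Mackey-type argument that underlies the isomorphism \eqref{eq:MackeyDecompExample} at the end of Section \ref{sec:QR} identifies the image with
\[
{^{\pi}}K^{\bullet + \hat{\theta}_g}_{C^R_{\Gh}(g)}(X^g)[q^{\pm 1}] \otimes_{\Z} \mathbb{C}.
\]

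Next, I would apply the twisted equivariant Pontryagin character $\ph^{\hat{\theta}_g}$ (applied to the pair $C_{\G}(g) \leq C^R_{\Gh}(g)$) to each $q$-coefficient, producing
\[
\bigoplus_{h} \bigl(H^{\bullet}((X^g)^h) \otimes_{\mathbb{C}} \mathbb{C}_{\uptau_\pi^{\refl}(\hat{\theta}_g)([-]h)} \bigr)^{C^R_{\Gh}(g,h)}[q^{\pm 1}],
\]
where $h$ ranges over $\pi_0(C_{\G}(g) \git_R C^R_{\Gh}(g))$. Observing that $(X^g)^h = X^{g,h}$ and using the bijection
\[
\pi_0(\G^{(2)} \git_R \Gh) \simeq \bigsqcup_{g \in \pi_0(\G \git_R \Gh)} \pi_0(C_{\G}(g) \git_R C^R_{\Gh}(g)),
\]
obtained by fixing a representative $g$ of each Real conjugacy class and then a representative $h \in C_{\G}(g)$ of each $C^R_{\Gh}(g)$-Real orbit, taking the product over $g$ reassembles the full target as claimed.

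The main obstacle will be to verify that the twisted one-dimensional representation of $C^R_{\Gh}(g,h)$ produced by $\ph^{\hat{\theta}_g}$ at the component labelled by $h$ really is $\mathbb{C}_{\uptau_\pi^{\refl}(\hat{\theta}_g)([-]h)}$. This amounts to tracing through two iterated twisted loop transgressions from Section \ref{sec:transMaps}: the first, $\tilde{\uptau}^{\refl}_\pi$, sends $\hat{\alpha} \in Z^3(B \Gh)$ to $\hat{\theta}_g \in Z^{2+\pi}(B C^R_{\Gh}(g))$; the second, $\uptau^{\refl}_\pi$, sends $\hat{\theta}_g$ to a cocycle in $Z^1(\mathcal{L}^{\refl}_\pi B C^R_{\Gh}(g))$, whose restriction to the component indexed by $h$ is the stated one-dimensional representation. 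A smaller bookkeeping point, which follows from the naturality of transgression and of the Pontryagin character, is that the whole construction is independent of the chosen representatives of (Real) conjugacy classes and compatible with the inclusions of centralizers.
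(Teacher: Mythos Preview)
Your proposal is correct and follows essentially the same route as the paper: decompose via Proposition \ref{prop:QRGlobQuot}, pull back along $p$ of diagram \eqref{diag:finiteOrderRealConstLoop}, apply the Mackey-type isomorphism \eqref{eq:MackeyDecompExample} to split off the $\mathbb{T}$-factor as $[q^{\pm 1}]$, and then apply the twisted Pontryagin character to each coefficient. The only cosmetic difference is that the paper separates the last step into the Atiyah--Segal map \eqref{eq:twistedRealAtiyahSegal} followed by the ordinary Chern/Pontryagin character, whereas you invoke the composite $\ph^{\hat{\theta}_g}$ directly; your remark on rational injectivity of the pullback is true but not needed, since the theorem only asserts the existence of a homomorphism.
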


\begin{proof}
Proposition \ref{prop:QRGlobQuot} gives an isomorphism
\[
\QR^{\bullet+\hat{\alpha}}(X \git \G)
\simeq
\bigoplus_{g \in \pi_0(\G \git_R \Gh)}{^{\pi}}K^{\bullet+\hat{\theta}_g}_{\Lambda^R_{\Gh}(g)}(X^{g}).
\]
Pullback along the homomorphism $p$ from diagram \eqref{diag:finiteOrderRealConstLoop} gives a map
\begin{equation*}
\label{eq:cPulllback}
\bigoplus_{g \in \pi_0(\G \git_R \Gh)}{^{\pi}}K^{\bullet+\hat{\theta}_g}_{\Lambda^R_{\Gh}(g)}(X^{g})
\rightarrow
\bigoplus_{g \in \pi_0(\G \git_R \Gh)}{^{\pi}}K^{\bullet+\hat{\theta}_g}_{\mathbb{T} \rtimes_{\pi} C^R_{\Gh}(g)}(X^{g}).
\end{equation*}
Consider the summand labelled by $g \in \pi_0(\G \git_R \Gh)$. Since the normal subgroup $\mathbb{T} \trianglelefteq \mathbb{T} \rtimes_{\pi} C^R_{\Gh}(g)$ acts trivially on $X^g$, we can form the composition
\[
{^{\pi}}K^{\bullet+\hat{\theta}_g}_{\mathbb{T} \rtimes_{\pi} C^R_{\Gh}(g)}(X^{g})
\rightarrow
{^{\pi}}K^{\bullet+\hat{\theta}_g}_{C^R_{\Gh}(g)}(X^{g})[q^{\pm 1}]
\rightarrow
\bigoplus_{\substack{h \in \G \\ (g, h) \in \pi_0 ( \G^{(2)} \git_R \Gh)}}  {^{\pi}}K^{\bullet + \hat{\theta}_{g,h}}_{C_{\Gh}^R(g,h)}(X^{g,h})[q^{\pm 1}],
\]
where $\hat{\theta}_{g,h}$ is the restriction of $\hat{\theta}_g$ to $C^R_{\Gh}(g,h)$. The first and second maps in the composition are the isomorphism \eqref{eq:MackeyDecompExample} and Atiyah--Segal map \eqref{eq:twistedRealAtiyahSegal}, respectively. Further applying the twisted equivariant Chern or Pontryagin character to each summand of the codomain, according to whether $(g,h)$ lies in $\pi_0(\G^{(2)} \git \G)_{+1}$ or $\pi_0(\G^{(2)} \git \G)_{-1}$, gives a map
\[
{^{\pi}}K^{\bullet+\hat{\theta}_g}_{\mathbb{T} \rtimes_{\pi} C^R_{\Gh}(g)}(X^{g}) \otimes_{\Z} \mathbb{C}
\rightarrow
\bigoplus_{\substack{h \in \G \\ (g, h) \in \pi_0 ( \G^{(2)} \git_R \Gh)}}(H^{\bullet}(X^{g, h})\otimes_{\mathbb{C}} \mathbb{C}_{\uptau_{\pi}^{\refl}(\hat{\theta}_g)([-]h)}[q^{\pm 1}])^{C^R_{\Gh}(g, h)}.
\]
Assembling these maps for various $g \in \pi_0(\G \git_R \Gh)$ gives the desired map $\ph^{\hat{\alpha}}$.
\end{proof}

In \cite[\S 7]{huan2020}, a Chern character for twisted quasi-elliptic cohomology was constructed, taking the form
\[
\ch^{\alpha} : \Q^{\bullet+\alpha}(X \git \G) \otimes_{\Z} \mathbb{C}
\rightarrow
\bigoplus_{(g,h) \in \pi_0(\G^{(2)} \git \G)} (H^{\bullet}(X^{g,h}) \otimes_{\mathbb{C}} \mathbb{C}_{\uptau(\theta_g)([-]h)} [q^{\pm 1}])^{C_{\G}(g,h)}.
\]
It is immediate from the constructions that $\ch^{\alpha} \circ c = c \circ \ph^{\hat{\alpha}}$, where on the left hand side $c: \QR^{\bullet+\hat{\alpha}}(X \git \G) \rightarrow \Q^{\bullet+\alpha}(X \git \G)$ is the forgetful map while on the right $c$ is the natural forgetful map from the codomain of $\ph^{\hat{\alpha}}$ to that of $\ch^{\alpha}$.

\section{The Real Tate curve and Real quasi-elliptic cohomology} \label{sec:TateReal}

We give an explicit formula for the involution on the Tate curve. As shown in Example \ref{ex:Realtortate}, Real quasi-elliptic cohomology describes the torsion points of the Real Tate curve in a way which is compatible with the complex case.

\subsection{The Tate curve}
\label{sec:Tate}

We collect basic facts about the Tate curve \cite[\S 8]{katz1985}.

The cubic equation
\begin{equation}
\label{eq:cubCurve}
y^2+a_1xy +a_3y=x^3+ a_2x^2+a_4x+a_6
\end{equation}
defines an elliptic curve $E$. The Tate curve $\Tate(q)$ over $\Z\pser{q}$ os obtained from equation \eqref{eq:cubCurve} by setting $a_1=1$, $a_2=a_3=0$ and
\[
a_4=-5\sum_{n=1}^{\infty} n^3q^n/(1-q^n),
\qquad
a_6=\frac{1}{12}\sum_{n=1}^{\infty} (7n^5+5n^3)q^n/(1-q^n).
\]
The $N$-torsion points $T[N]$ of $\Tate(q)$ over $\Z[q^{\pm 1}]$ is the disjoint union of schemes $T_0[N], \dots, T_{N-1}[N]$, where
\[
T_i[N]=\Spec(\Z[q^{\pm 1}][x]/(x^N-q^i)).
\]
See \cite[\S 8.7]{katz1985}. There is an exact sequence of group schemes
\begin{equation*}
0 \rightarrow \mu_N \simeq T_0(N)  \xrightarrow[]{a_N} T[N] \xrightarrow[]{b_N} \Z[\frac{1}{N}]/\Z \rightarrow 0
\end{equation*}
where $a_N$ sends $\zeta\in \mu_N$ to $(\zeta, 0)$ and $b_N$ sends $(X, \frac{i}{N})$ to $\frac{i}{N} \mod \Z$. See \cite[Eq. (8.7.1.4)]{katz1985}.

Let $R$ be a $\Z[q^{\pm 1}]$-algebra with connected spectrum. An element of $T[N](R)$ is determined by a pair $(\xi, \frac{i}{N})$, where $0 \leq i \leq N-1$ and $\xi \in R$ satisfies $\xi^N=q^i$. Multiplication in $T[N](R)$ is defined by
\[
(\xi_1, \frac{i_1}{N}) \cdot (\xi_2, \frac{i_2}{N})
=
\begin{cases}
(\xi_1 \xi_2, \frac{i_1+i_2}{N}) & \mbox{if } i_1 + i_2 < N, \\
(\xi_1 \xi_2q^{-1}, \frac{i_1+i_2-N}{N}) & \mbox{if } i_1 + i_2 \geq N.
\end{cases}
\]
See \cite[Eq. (8.7.1.2)]{katz1985}.

In \cite[\S 8.7.2]{katz1985}, a smooth one dimensional commutative group scheme over
$\Z[q^{\pm 1}]$ is defined by $T = \bigsqcup_{\alpha \in \mathbb{Q} \cap [0,1)} T_{\alpha}$, where each $T_{\alpha} =\mathbb{G}_m$. There is an exact sequence
\[
1 \rightarrow \mathbb{G}_m\rightarrow T\rightarrow \mathbb{Q}/\Z\rightarrow 0
\]
of group schemes over $\Z[q^{\pm 1}]$. We interpret $T$ as a functor from the category of $\Z[q^{\pm 1}]$-algebras with connected spectrum to the category of abelian groups. For any $\Z[q^{\pm 1}]$-algebra $R$ with connected spectrum, $T(R)=(R^{\times}\times\mathbb{Q} ) \slash \langle (q, -1)\rangle$ is an abelian group and $T$ defines a functor from the category of $\Z[q^{\pm 1}]$-algebras with connected spectrum to the category of abelian groups. See \cite[Eq. (8.7.2.3)]{katz1985}.

The relationship between $T$ and $\Tate(q)$ is described by the isomorphism of ind-$\Z \pser{q}$-schemes $T_{\tor} \otimes_{\Z[q^{\pm 1}]} \Z \pser{q} \simeq \Tate(q)_{\tor}$ \cite[\S 8.8]{katz1985}.

\subsection{Involutions of the Tate curve} \label{invtater}

The elliptic curve $E$ defined by equation \eqref{eq:cubCurve} has an involution
\begin{equation}
(x, y)\mapsto -(x, y):=(x, y^*),
\end{equation}
where $y^*:=-a_1x-a_3-y$ \cite[\S 10]{husemoller2004}.
For the Tate curve over $\mathbb{C}$, this involution can be understood in terms of group inversion of $\mathbb{C}^{\times}$. Given $q \in \mathbb{C}$ which satisfies $0 < \vert q \vert < 1$, there is an isomorphism $\phi_q: \mathbb{C}^{\times}/q^{\Z}\rightarrow \Tate(q)$ such that if $ww^{\prime}=1$ in $\mathbb{C}^{\times}/q^{\Z}$, then $\phi_q(w^{\prime})=-\phi_q(w)$ \cite[Theorem 10.5.7]{husemoller2004}. In other words, $\phi_q$ is $\Z_2$-equivariant when $\mathbb{C}^{\times} \slash q^{\Z}$ is given the involution of induced by inversion of $\mathbb{C}^{\times}$.

As indicated in \cite{hahn2020}, complex conjugation on complex $K$-theory corresponds at the level of formal group laws to group inverse. Motivated by this, we use group inversion to define an involution of $\Tate(q)$ over any coefficient ring.

Using the model for torsion points from Section \ref{sec:Tate}, the involution at the level of the torsion part of $\Tate(q)$ over $\Z[q^{\pm }]$ can be described as follows. Denote by $\iota: T[N] \rightarrow T[N]$ the group inverse. Then $\iota(R): T[N](R)\rightarrow T[N](R)$ is given by 
\begin{equation*}
\iota(R)(\xi, \frac{i}{N})
=
\begin{cases}
(\xi^{-1}, \frac{0}{N}) & \mbox{if } i=0, \\
(\xi^{-1} q, \frac{N-i}{N}) & \mbox{if } 0 < i \leq N-1.
\end{cases}
\end{equation*}
In particular, $\iota$ maps $T_i[N]$ to $T_{N-i}[N]$ and makes the diagram
\begin{equation}\label{invTate2}
\begin{tikzpicture}[baseline= (a).base]
\node[scale=1.0] (a) at (0,0){
\begin{tikzcd}[column sep=2.5em,row sep=1.5em]
0 \arrow{r} & \mu_N \arrow{r}[above]{a_N} \arrow{d}[left]{(-)^{-1}} & T[N] \arrow{d}[right]{\iota} \arrow{r}[above]{b_N} & \Z[\frac{1}{N}]/\Z \arrow{d}[right]{(-)^{-1}} \arrow{r} & 0\\
0 \arrow{r} & \mu_N \arrow{r}[below]{a_N} & T[N] \arrow{r}[below]{b_N} &\Z[\frac{1}{N}]/\Z \arrow{r} & 0
\end{tikzcd}
};
\end{tikzpicture}
\end{equation}
commute. There is an isomorphism $T[N] \simeq \mu_N\times \Z[\frac{1}{N}]/\Z $ over $\Z[q^{\pm 1}][q^{\frac{1}{N}}]$ under which the involution becomes $\iota(\alpha^j, \frac{i}{N}) = (\alpha^{-j}, \frac{N-i}{N})$.

\subsection{Connection between $KR$, $\QR$ and the Tate curve} \label{r:QEllRTatecurve}
\label{sec:connectQR}

We study involutions of the formal group $\mathbb{G}_m$ over $K^0(\pt) \simeq \Z$ induced by Real structures. Given a finite abelian group $\A$, its Pontryagin dual is $\A^*=\Hom_{\Grp}(\A, S^1)$.

Consider first the product Real structure, $\hat{\A} = \A \times \Z_2$. Denote by $\omega$ the generator of $\Z_2$. There is an isomorphism
\[
\Spec(K^0_{\A}(\pt)) \xrightarrow[]{\sim} \Hom (\A^*, \mathbb{G}_m)
\]
of group schemes over $\Z$ which is natural in $\A$. Complex conjugation and pullback of $\A$-equivariant vector bundles, $V \mapsto \omega^*\overline{V}$, defines a graded ring involution $I: K^{\bullet}_{\A}(\pt) \rightarrow K^{\bullet}_{\A}(\pt)$. Restricting to degree zero, $\Spec(I)$ is an involution of $\Spec(K^0_{\A}(\pt))$ in the category of schemes over $\Z$. Since complex conjugation of vector bundles corresponds to the formal inverse of $\mathbb{G}_m$, the corresponding involution of $\Hom (\A^*, \mathbb{G}_m)$ sends $f$ to
\begin{equation}
\label{eq:invMorphisms}
f^{-1}: \A^* \xrightarrow[]{f} \mathbb{G}_m \xrightarrow[]{(-)^{-1}} \mathbb{G}_m.
\end{equation}
The ring $KR^0_{\A}(\pt)$ is the homotopy fixed points of the $\Z_2$-action on $K^0_{\A}(\pt)$ generated by $I$. Thus, $\Spec(KR^0_{\A}(\pt))$ is the homotopy fixed points of $\Spec(I)$ and is isomorphic to the homotopy fixed points $\Hom_{\Z_2} (\A^*, \mathbb{G}_m)$, where $\Z_2$ acts as in \eqref{eq:invMorphisms}.

In the case of arbitrary Real structure $\hat{\A}$, fix $\omega\in \hat{\A} \setminus \A$ and consider the involution $I^{\omega}: K^0_{\A}(\pt) \rightarrow K^0_{\A}(\pt)$, $V \mapsto P^{\omega}(V)$, where $P^{\omega}$ is defined in Section \ref{sec:RealRepThy}. The corresponding involution of $\Hom (\A^*, \mathbb{G}_m)$ sends $f$ to
\begin{equation}
\label{eq:invMorphismsGen}
I^{\omega} (f): \A^* \xrightarrow[]{\Ad_{\omega}^*} \A^* \xrightarrow[]{f} \mathbb{G}_m \xrightarrow[]{(-)^{-1}} \mathbb{G}_m.
\end{equation}
The ring $KR^0_{\A}(\pt)$ is the homotopy fixed points of the $\Z_2$-action on $K^0_{\A}(\pt)$ generated by $I^{\omega}$ and $\Spec(KR^0_{\A}(\pt))$ is the homotopy fixed points of $\Spec(I^{\omega})$. Finally, $\Spec(KR^0_{\A}(\pt))$ is isomorphic to $\Hom_{\Z_2} (\A^*, \mathbb{G}_m)$, where now $\Z_2$ acts by \eqref{eq:invMorphismsGen}.

\begin{Ex} \label{ex:Realtortate}
The $N$-torsion points of $\Tate(q)$ are $T[N] \simeq \Hom (\Z_N^*, \Tate(q))$ and, over $\Z[q^{\pm 1}]$, are isomorphic to $\Spec(\Q^0_{\Z_N}(\pt))$ \cite[Remark 3.13]{huan2018}. As shown in diagram \eqref{invTate2}, the involution of $T[N]$ is group inversion and the induced maps on $\mu_N$ and $\Z[\frac{1}{N}]/\Z \leq \mathbb{Q}/\Z$ are the respective group inverses.

Consider the Real structure $\Gh = D_{2N}$ on $\G=\Z_N$. The involution of $\pi_0(\G \git \G)$ induced by $\Gh$ is trivial. The group inverses on $\mu_N$ and $\mathbb{G}_m$ correspond to complex conjugation on vector bundles and induce on $\Hom (\Z_N^*, \Tate(q))$ the involution which sends $f$ to the composition \eqref{eq:invMorphismsGen}.
The homotopy fixed points of this $\Z_2$-action
are
\[
\Hom_{\Z_2} (\Z_N^*, \Tate(q))
\simeq
\QR^0_{\Z_N}(\pt),
\]
which we henceforth denote by $TR[N]$.
\end{Ex}

By Example \ref{ex:KRPointAllReal}, there is an isomorphism $KR^0_{\mathbb{T}}(\pt) \simeq KR^0(\pt)[q^{\pm 1}]$.
The isomorphisms \eqref{eq:QRZnPt} and \eqref{eq:QZnPt} imply a pushout square
\begin{equation*}
\begin{tikzpicture}[baseline= (a).base]
\node[scale=1.0] (a) at (0,0){
\begin{tikzcd}[column sep=3.5em,row sep=1.5em]
KR^0_{\mathbb{T}}(\pt) \arrow{r}[above]{c} \arrow{d} & K^0_{\mathbb{T}}(\pt) \arrow{d}\\
\QR^0_{\Z_N}(\pt) \arrow{r}[below]{c} & \Q^0_{\Z_N}(\pt) \arrow[ul, phantom, "\usebox\pullback" , very near start, color=black]
\end{tikzcd}
};
\end{tikzpicture}
\end{equation*}
whose spectrum is then a pullback square
\begin{equation*}
\begin{tikzpicture}[baseline= (a).base]
\node[scale=1.0] (a) at (0,0){
\begin{tikzcd}[column sep=3.5em,row sep=1.5em]
T[N] \arrow{r} \arrow{d} \arrow[dr, phantom, "\usebox\pullback" , very near start, color=black] & TR[N] \arrow{d}\\
\Spec(K^0_{\mathbb{T}}(\pt)) \arrow{r} & \Spec(KR^0_{\mathbb{T}}(\pt)).
\end{tikzcd}
};
\end{tikzpicture}
\end{equation*}

\section{Power operations for Real quasi-elliptic cohomology} \label{sec:powerOp}

We construct power operations for $\QR^{\bullet}$. After preliminary material on wreath products in Section \ref{sec:wreathProd}, in Section \ref{clarealpowerKRtw} we treat the case of twisted equivariant $KR$-theory.
We use the untwisted specialization of these results in Section \ref{QEllRPower} to construct power operations for untwisted $\QR^{\bullet}$.

\subsection{Wreath products and their Real variants}
\label{sec:wreathProd}

Let $\G$ be a group and $\Sigma_N$ the symmetric group on $N$ letters. The wreath product $\G\wr\Sigma_N$ is the set $\G^N \times \Sigma_N$ with group structure
\[
(g_1, \dots, g_N; \sigma) (h_1, \dots, h_N; \tau) =(g_1h_{\sigma^{-1}(1)}, \dots, g_N h_{\sigma^{-1}(N)}; \sigma\tau).
\]
We often write $(\underline{g};\sigma)$ for $(g_1, \dots, g_N; \sigma)$ in what follows.

Given a Real structure $\Gh$ on $\G$, the subgroup
\begin{equation*}
\widehat{\G\wr\Sigma_N}:= \{(\underline{g}; \sigma) \in \Gh \wr \Sigma_N \mid \pi(g_i) = \pi(g_j) \mbox{ for all } i, j \}
\end{equation*}
with grading $\pi: \widehat{\G\wr\Sigma_N} \rightarrow \Z_2$, $(\underline{g}; \sigma)\mapsto \pi(g_i)$, is a Real structure on $\G\wr\Sigma_N$.

Assume that $\Gh$ is finite.

\begin{Lem}
For each $N \geq 1$, the graded abelian group homomorphism $\wp_N: C^{\bullet + \pi}(B \Gh) \rightarrow C^{\bullet + \pi}(B \widehat{\G \wr \Sigma_N})$ defined on $\hat{\alpha} \in C^{n+\pi}(B \Gh)$ by
\begin{equation*}
\wp_N(\hat{\alpha})([a_1 \vert \cdots \vert a_n]) =
\prod_{j=1}^N \hat{\alpha}([g_{1_j} \vert g_{2_{\sigma_1^{-1}(j)}} \vert g_{3_{(\sigma_1\sigma_2)^{-1}(j)}} \vert \cdots \vert g_{n_{(\sigma_1\cdots \sigma_{n-1})^{-1}(j)}}]),
\end{equation*}
where $a_i=(\underline{g_i}; \sigma_i)\in \widehat{\G\wr\Sigma_N}$, is a cochain map.
\end{Lem}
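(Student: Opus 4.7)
The plan is to verify the cochain identity $d\wp_N(\hat{\alpha}) = \wp_N(d\hat{\alpha})$ by direct expansion of both sides on a generic tuple $[a_1|\cdots|a_{n+1}]$ with $a_i = (\underline{g_i};\sigma_i) \in \widehat{\G\wr\Sigma_N}$. Setting $j_i := (\sigma_1\cdots\sigma_{i-1})^{-1}(j)$ for each $j \in \{1,\ldots,N\}$, the defining formula reads
\[\wp_N(\hat{\alpha})([a_1|\cdots|a_n]) = \prod_{j=1}^N \hat{\alpha}([g_{1_{j_1}}|\cdots|g_{n_{j_n}}]),\]
so everything reduces to tracking the orbit indices $j_1,\ldots,j_n$.

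First I would apply the twisted differential on $B\Gh$ to each factor of $\wp_N(d\hat{\alpha})([a_1|\cdots|a_{n+1}])$. Each $d\hat{\alpha}([g_{1_{j_1}}|\cdots|g_{(n+1)_{j_{n+1}}}])$ decomposes into three kinds of subterms: a drop-leftmost subterm $\hat{\alpha}([g_{2_{j_2}}|\cdots|g_{(n+1)_{j_{n+1}}}])^{\pi(g_{1_{j_1}})}$; for each $i \in \{1,\ldots,n\}$ a multiply-adjacent subterm $\hat{\alpha}([g_{1_{j_1}}|\cdots|g_{i_{j_i}} g_{(i+1)_{j_{i+1}}}|\cdots|g_{(n+1)_{j_{n+1}}}])^{(-1)^i}$; and a drop-rightmost subterm $\hat{\alpha}([g_{1_{j_1}}|\cdots|g_{n_{j_n}}])^{(-1)^{n+1}}$. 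Since the signs $(-1)^i$ and $(-1)^{n+1}$ are independent of $j$, and since by definition of $\widehat{\G\wr\Sigma_N}$ the value $\pi(g_{1_k})$ is independent of $k$ and equals $\pi(a_1)$, all exponents pull out of $\prod_{j=1}^N$.

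Dually, applying the twisted differential on $B\widehat{\G\wr\Sigma_N}$ produces the analogous three kinds of terms:
\[d\wp_N(\hat{\alpha})([a_1|\cdots|a_{n+1}]) = \wp_N(\hat{\alpha})([a_2|\cdots|a_{n+1}])^{\pi(a_1)}\cdot\prod_{i=1}^n\wp_N(\hat{\alpha})([a_1|\cdots|a_i a_{i+1}|\cdots|a_{n+1}])^{(-1)^i}\cdot \wp_N(\hat{\alpha})([a_1|\cdots|a_n])^{(-1)^{n+1}}.\]
The verification reduces to matching these families one by one. For drop-leftmost one pulls $\pi(a_1)$ outside the product over $j$ and then reindexes by $j' = \sigma_1^{-1}(j)$, recognising the result as $\wp_N(\hat{\alpha})([a_2|\cdots|a_{n+1}])^{\pi(a_1)}$. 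For drop-rightmost the match is immediate from the definition. For each $i$, the crucial identity is
\[(a_i a_{i+1})_{j_i} = g_{i_{j_i}}\, g_{(i+1)_{\sigma_i^{-1}(j_i)}} = g_{i_{j_i}}\, g_{(i+1)_{j_{i+1}}},\]
combined with the observation that the orbit indices for the shortened tuple $(a_1,\ldots,a_i a_{i+1},\ldots,a_{n+1})$ coincide with $j_k$ for $k \leq i$ and with $j_{k+1}$ for $k > i$; inserting this into the definition of $\wp_N$ on the shortened tuple recovers the $j$-product of multiply-adjacent subterms extracted from $\wp_N(d\hat{\alpha})$.

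The main obstacle is purely bookkeeping: one must simultaneously track the left/right position conventions of the differential, the signs appearing in both expansions, and the composition order of the $\sigma_k$ implicit in the orbit indexing. No nontrivial cocycle identity or cancellation is used; once the reindexings $j \mapsto j_i$ and their shifted variants are set up correctly, the two sides of $d\wp_N(\hat{\alpha}) = \wp_N(d\hat{\alpha})$ are assembled from the same factors.
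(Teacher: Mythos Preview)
Your proposal is correct and takes essentially the same approach as the paper: direct expansion of both $d\wp_N(\hat{\alpha})$ and $\wp_N(d\hat{\alpha})$ on a generic simplex, then term-by-term matching using that $\pi(g_{1_k}) = \pi(a_1)$ for all $k$ and that a product over all $j \in \{1,\ldots,N\}$ is invariant under reindexing by a permutation. Your orbit-index shorthand $j_i$ and explicit handling of the wreath-product multiplication in the middle terms are if anything cleaner than the paper's somewhat terse display.
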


\begin{proof}
Let $\hat{\alpha} \in C^{n-1 + \pi}(B \Gh)$. Direct calculations give
\begin{multline*}
d\wp_N(\hat{\alpha})([a_1 \vert \cdots \vert a_n])= \prod_{j=1}^N \Big( \hat{\alpha}([g_{2_j} \vert \cdots \vert g_{n_{(\sigma_1\cdots \sigma_{n-2})^{-1}(j)}}])^{\pi(a_1)} \\
\cdot \prod_{i=1}^{n-1}\hat{\alpha}([g_{1_j} \vert \cdots \vert g_{i_{(\sigma_1\cdots \sigma_{i-1})^{-1}(j)}}  g_{(i+1)_{(\sigma_1\cdots \sigma_{i})^{-1}(j)}} \vert \cdots \vert g_{n_{(\sigma_1\cdots \sigma_{n-1})^{-1}(j)}}])^{(-1)^{n-i}} \\
\cdot 
\hat{\alpha}([g_{1_j} \vert \cdots \vert g_{(n-1)_{(\sigma_1\cdots \sigma_{n-2})^{-1}(j)}}])^{(-1)^n} \Big)
\end{multline*}
and
\begin{multline*}
\wp_N(d\hat{\alpha})([a_1 \vert \cdots \vert a_n])= \prod_{j=1}^N \Big( \hat{\alpha}([g_{2_{\sigma_1^{-1}(j)}}\vert \cdots \vert g_{n_{(\sigma_1\cdots \sigma_{n-1})^{-1}(j)}}])^{\pi(g_{1_j})} \\  \cdot
\prod_{i=1}^{n-1} \hat{\alpha}([g_{1_j} \vert \cdots \vert g_{i_{(\sigma_1\cdots \sigma_{i-1})^{-1}(j)}}g_{(i+1)_{(\sigma_1\cdots \sigma_{i})^{-1}(j)}}\vert \cdots \vert g_{n_{(\sigma_1\cdots \sigma_{n-1})^{-1}(j)}}])^{(-1)^{n-i}} \\ \cdot 
\hat{\alpha}([g_{1_j} \vert \cdots \vert g_{(n-1)_{(\sigma_1\cdots \sigma_{n-2})^{-1}(j)}}])^{(-1)^n} \Big).
\end{multline*}
By definition, $\pi(g_{1_j}) = \pi (a_1)$ for all $j$. Since the product is taken over $j \in \{1,\dots, N\}$, we conclude that $d\wp_N(\hat{\alpha})([a_1 \vert \cdots \vert a_n]) = \wp_N(d\hat{\alpha})([a_1 \vert \cdots \vert a_n])$.
\end{proof}

Define $\wp_0(\hat{\alpha})= 1$ for all $\hat{\alpha} \in C^{\bullet + \pi}(B \Gh)$.

\begin{Lem} \label{twistprop}
Let $\hat{\alpha}, \hat{\beta} \in C^{n+\pi}(B\Gh)$. The cochain operations $\{\wp_N\}_{N\geq 0}$ satisfy:
\begin{enumerate}[wide,labelwidth=!, labelindent=0pt,label=(\roman*)]
\item $\wp_0(\hat{\alpha}) = 1$ and $\wp_1(\hat{\alpha})=\hat{\alpha}$.

\item $\wp_M(\hat{\alpha})\boxtimes \wp_N(\hat{\alpha}) = \iota^*\wp_{M+N}(\hat{\alpha})$,
where $\iota^*$
is the pullback along the inclusion $\iota: \widehat{\G\wr \Sigma_M}\times_{\Z_2}\widehat{\G\wr \Sigma_N}\rightarrow \widehat{\G\wr \Sigma_{M+N}}$

\item $\wp_M(\wp_N(\hat{\alpha}))= \iota^*\wp_{MN}(\hat{\alpha})$, where $\iota^*$ is the pullback along the inclusion $\iota: \widehat{\G\wr (\Sigma_N\wr\Sigma_M)} \rightarrow \widehat{\G\wr \Sigma_{MN}}$.

\item $\wp_N(\hat{\alpha}\hat{\beta}) = \iota^*(\wp_N(\hat{\alpha})\boxtimes \wp_N(\hat{\beta}))$, where $\iota^*$ is the pullback along the inclusion $\iota: \widehat{\G\wr \Sigma_N} \rightarrow \widehat{\G\wr (\Sigma_{N}\times \Sigma_N)}$.
\end{enumerate}
\end{Lem}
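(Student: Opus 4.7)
The plan is to verify each identity by direct manipulation of the explicit formula defining $\wp_N$. Parts (i) and (iv) are essentially immediate: $\wp_0\equiv 1$ by definition; for $\wp_1$ the only element of $\Sigma_1$ is trivial so all reindexings collapse and the single-factor product reduces to $\hat{\alpha}$; and (iv) follows because pointwise multiplication of cochains commutes with the $N$-fold product appearing in the definition of $\wp_N$, since each of the $N$ factors for $\hat{\alpha}\hat{\beta}$ splits into a factor for $\hat{\alpha}$ times a factor for $\hat{\beta}$ evaluated on the same reindexed chain.

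For (ii), the embedding $\iota:\widehat{\G\wr\Sigma_M}\times_{\Z_2}\widehat{\G\wr\Sigma_N}\hookrightarrow\widehat{\G\wr\Sigma_{M+N}}$ sends $((\underline{g};\sigma),(\underline{h};\tau))$ to $((g_1,\ldots,g_M,h_1,\ldots,h_N);\sigma\oplus\tau)$, where $\sigma\oplus\tau$ preserves the block partition $\{1,\ldots,M\}\sqcup\{M+1,\ldots,M+N\}$. For a composable chain $a_1,\ldots,a_n$ in the image of $\iota$, each partial product $\sigma_1\cdots\sigma_{i-1}$ of the underlying permutations preserves this partition. Consequently the product $\prod_{j=1}^{M+N}$ defining $\iota^*\wp_{M+N}(\hat{\alpha})$ factors into a product over $j\in\{1,\ldots,M\}$ and a product over $j\in\{M+1,\ldots,M+N\}$, which are precisely $\wp_M(\hat{\alpha})$ and $\wp_N(\hat{\alpha})$ evaluated on the respective projections of the chain.

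The main obstacle is (iii), where the bookkeeping is heaviest. I would first fix the lexicographic bijection $\{1,\ldots,MN\}\simeq\{1,\ldots,M\}\times\{1,\ldots,N\}$ used to realize $\iota:\widehat{\G\wr(\Sigma_N\wr\Sigma_M)}\hookrightarrow\widehat{\G\wr\Sigma_{MN}}$. Writing a composable chain in $\widehat{\G\wr(\Sigma_N\wr\Sigma_M)}$ as $a_i=((g_{i,k,\ell})_{k,\ell};(\tau_{i,1},\ldots,\tau_{i,M});\rho_i)$, the corresponding permutation in $\Sigma_{MN}$ acts on $(k,\ell)$ by $(k,\ell)\mapsto(\rho_i(k),\tau_{i,\rho_i(k)}(\ell))$, and its iterated composition $\sigma_1\cdots\sigma_{i-1}$ factors into an outer $\Sigma_M$-part and inner $\Sigma_N$-parts indexed by blocks. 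Expanding $\wp_M(\wp_N(\hat{\alpha}))$ using the definition twice—first $\wp_N$ producing an $N$-fold product over $\ell$, then $\wp_M$ producing an $M$-fold product over $k$—yields exactly the same $MN$-fold product of evaluations of $\hat{\alpha}$ as $\iota^*\wp_{MN}(\hat{\alpha})$, once the indices $(k,\ell)$ are matched via the chosen bijection. Compatibility of the $\Z_2$-gradings is automatic because $\pi$ is constant across each tuple in the $\Z_2$-graded wreath subgroup, so the scalar exponents in the twisted cochain formula all agree.
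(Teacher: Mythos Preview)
Your proposal is correct and follows essentially the same approach as the paper: both verify each identity by direct manipulation of the explicit product formula for $\wp_N$, with (i) and (iv) immediate, (ii) handled via the block decomposition $\{1,\ldots,M\}\sqcup\{M+1,\ldots,M+N\}$, and (iii) handled by fixing the bijection $\{1,\ldots,MN\}\simeq\{1,\ldots,M\}\times\{1,\ldots,N\}$ and matching the double product $\prod_j\prod_k$ against $\prod_l$. The paper's write-up is slightly more explicit about the index-tracking in (iii), but the strategy is identical.
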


\begin{proof}
\begin{enumerate}[wide,labelwidth=!, labelindent=0pt,label=(\roman*)]
\item This is clear.

\item
By definition, $\iota((\underline{g}; \sigma), (\underline{h}; \tau)) = (\underline{g},\underline{h}; (\sigma,\tau))$. Writing $a_i= (\underline{g_i}; \sigma_i) \in \widehat{\G\wr\Sigma_M}$ and $b_i=(\underline{h_i}; \tau_i) \in \widehat{\G\wr\Sigma_N}$, we compute
\begin{align*}
&\left( \iota^*\wp_{M+N}(\hat{\alpha}) \right) ([(a_1, b_1) \vert \cdots \vert (a_n, b_n)])
=
\wp_{M+N}(\hat{\alpha}) ([\iota(a_1, b_1) \vert \cdots \vert \iota(a_n, b_n)])\\
&=
\prod_{i=1}^N \hat{\alpha}([g_{1_i} \vert g_{2_{\sigma_1^{-1}(i)}}  \vert \cdots \vert g_{n_{(\sigma_1\cdots \sigma_{n-1})^{-1}(i)}}]) \prod_{j=1}^N \hat{\alpha}([h_{1_j} \vert h_{2_{\tau_1^{-1}(j)}} \vert \cdots \vert h_{n_{(\tau_1\cdots \tau_{n-1})^{-1}(j)}}])\\
&=
\wp_M(\hat{\alpha})\boxtimes \wp_N(\hat{\alpha}) ([(a_1, b_1) \vert \cdots \vert (a_n, b_n)]).
\end{align*}

\item

View $\Sigma_N\wr\Sigma_M$ as a subgroup of $\Sigma_{MN}$ by letting $(\underline{\tau};\sigma) \in \Sigma_N\wr\Sigma_M$ act on the set $\{j_k \mid 1\leq j\leq M, \; 1\leq k \leq N\}$ by
\begin{equation}\label{eq:bigSymmAct}
(\underline{\tau}; \sigma)\cdot j_k=\sigma(j)_{\tau_{\sigma(j)}(k)}.
\end{equation}
Then $\iota(\underline{f}; \sigma) = (\underline{g_1},\dots, \underline{g_M}; (\underline{\tau};\sigma))$, where $f_i = (\underline{g_i}; \tau_i) \in \widehat{\G \wr \Sigma_N}$ and $\sigma \in \Sigma_M$.

For $i =1, \dots, n$, let $c_i=(\underline{f_i}; \sigma_i) \in \widehat{\G \wr (\Sigma_N \wr \Sigma_M)}$ with $f_{i_j} \in \widehat{\G \wr \Sigma_N}$ and $\sigma_i \in \Sigma_M$. With this notation, we have
\[
\wp_M(\wp_N(\hat{\alpha})) ([c_1 \vert \cdots \vert c_n])
=
\prod_{j=1}^N \wp_N(\hat{\alpha}) ([f_{1_j} \vert f_{2_{\sigma_1^{-1}(j)}} \vert \cdots \vert f_{n_{(\sigma_1 \cdots \sigma_{n-1})^{-1}(j)}}]).
\]
Writing $f_{i_j} = (\underline{g_{i_j}}; \tau_{i_j})$, we have
\begin{multline*}
\wp_N(\hat{\alpha}) ([f_{1_j} \vert f_{2_{\sigma_1^{-1}(j)}} \vert \cdots \vert f_{n_{(\sigma_1 \cdots \sigma_{n-1})^{-1}(j)}}]) \\
=
\prod_{k=1}^M \hat{\alpha}([g_{1_{j_k}} \vert g_{2_{\sigma_1^{-1}(j)_{\tau_{1_j}^{-1}(k)}}} \vert \cdots \vert g_{n_{(\sigma_1 \cdots \sigma_{n-1})^{-1}(j)_{(\tau_{1_j} \cdots \tau_{n-1_j})^{-1}(k)}}}])
\end{multline*}
from which we conclude
\[
\wp_M(\wp_N(\hat{\alpha})) ([c_1 \vert \cdots \vert c_n])
=
\prod_{j=1}^N \prod_{k=1}^M \hat{\alpha}([g_{1_{j_k}} \vert g_{2_{\sigma_1^{-1}(j)_{\tau_{1_j}^{-1}(k)}}} \vert \cdots \vert g_{n_{(\sigma_1 \cdots \sigma_{n-1})^{-1}(j)_{(\tau_{1_j} \cdots \tau_{n-1_j})^{-1}(k)}}}]).
\]
On the other hand,
\[
\left( \iota^*\wp_{MN}(\hat{\alpha}) \right) ([c_1 \vert \cdots \vert c_n])
=
\wp_{MN}(\hat{\alpha})  ([\iota(c_1) \vert \cdots \vert \iota(c_n)]).
\]
Writing $\iota(c_i)=(\underline{h_i}; \mu_i)$, so that $\mu_i=(\underline{\tau_i};\sigma_i)$, the definition of $\wp_{MN}$ gives
\[
\wp_{MN}(\hat{\alpha})  ([\iota(c_1) \vert \cdots \vert \iota(c_n)])
=
\prod_{l=1}^{MN} \hat{\alpha}([h_{1_l} \vert h_{2_{\mu_1^{-1}(l)}} \vert h_{3_{(\mu_1\mu_2)^{-1}(l)}} \vert \cdots \vert h_{n_{(\mu_1\cdots \mu_{n-1})^{-1}(l)}}]).
\]
Since $(\underline{h_i};\mu_i) = (\underline{g_{i_1}}, \dots, \underline{g_{i_M}}; (\underline{\tau_i};\sigma_i))$, after identifying the sets $\{1, \dots, MN\}$ and $\{j_k \mid 1 \leq j \leq M, \; 1 \leq k \leq N\}$, the previous expression becomes
\[
\wp_{MN}(\hat{\alpha})  ([\iota(c_1) \vert \cdots \vert \iota(c_n)])
=
\prod_{j=1}^M \prod_{k=1}^N \hat{\alpha}([g_{1_{j_k}} \vert g_{2_{\mu_1^{-1}(j_k)}} \vert g_{3_{(\mu_1\mu_2)^{-1}(j_k})} \vert \cdots \vert g_{n_{(\mu_1\cdots \mu_{n-1})^{-1}(j_k)}}]).
\]
Using equation \eqref{eq:bigSymmAct}, the previous expression becomes
\[
\prod_{j=1}^N \prod_{k=1}^M \hat{\alpha}([g_{1_{j_k}} \vert g_{2_{\sigma_1^{-1}(j)_{\tau^{-1}_{1_{\sigma_1(j)}}(k)}}} \vert \cdots \vert g_{n_{(\sigma_1 \cdots \sigma_{n-1})^{-1}(j)_{(\tau_{1_j} \cdots \tau_{n-1_j})^{-1}(k)}}}]),
\]
as required.

\item Writing $a_i= ( \underline{g_i}; \sigma_i) \in \widehat{\G\wr\Sigma_N}$, we compute
\begin{align*}
&\wp_N(\hat{\alpha}\hat{\beta}) ([a_1 \vert \cdots \vert a_n])
=\prod_{j=1}^N (\hat{\alpha}\hat{\beta}) ([g_{1_j} \vert g_{2_{\sigma_1^{-1}(j)}} \vert \cdots \vert g_{n_{(\sigma_1\cdots \sigma_{n-1})^{-1}(j)}}]) \\
=& \prod_{j=1}^N \hat{\alpha} ([g_{1_j} \vert g_{2_{\sigma_1^{-1}(j)}} \vert \cdots \vert g_{n_{(\sigma_1\cdots \sigma_{n-1})^{-1}(j)}}])\prod_{j=1}^N \hat{\beta} ([g_{1_j} \vert g_{2_{\sigma_1^{-1}(j)}}  \vert \cdots \vert g_{n_{(\sigma_1\cdots \sigma_{n-1})^{-1}(j)}}])
    \\ & = \iota^*(\wp_N(\hat{\alpha})\boxtimes \wp_N(\hat{\beta}))([a_1 \vert \cdots \vert a_n]) . \qedhere
\end{align*}
\end{enumerate}
\end{proof}

Wreath products are extended from groups to groupoids in \cite[\S 4.1]{ganter2007}. In the Real setting, let $\mathbb{G}$ be a groupoid with involution $(\iota_{\mathbb{G}},\Theta_{\mathbb{G}})$. Then $\iota:=\iota_{\mathbb{G}}^N\times \id_{\Sigma_N}$ and $\Theta:= \Theta_{\mathbb{G}}^N\times \id_{\Sigma_N}$ define an involution $(\iota, \Theta)$ of the wreath product $\mathbb{G}\wr \Sigma_N$. Denote by $\widehat{\mathbb{G}\wr \Sigma_N}$ the quotient groupoid $(\mathbb{G} \wr \Sigma_N) \git (\iota,\Theta)$.

\subsection{Power operations for twisted equivariant $KR$-theory}\label{clarealpowerKRtw}

\begin{Ex} 
\label{ex:symmPowerRealRepTw}
Let $\pi: \Gh \rightarrow \Z_2$ be a $\Z_2$-graded finite group and $\hat{\theta} \in Z^{2+\pi}(B \Gh)$. Let $(V,\rho_V)$ be a $\hat{\theta}$-twisted Real representation of $\G$. The group $\widehat{\G\wr\Sigma_N}$ acts on $V^{\otimes N}$ by
\[
\rho_{V^{\boxtimes N}}(\underline{g}; \sigma) (v_1\otimes\cdots \otimes v_N)=
\rho_V(g_1) v_{\sigma^{-1}(1)} \otimes \cdots \otimes \rho_V(g_N)v_{\sigma^{-1}(N)}.
\]
A direct calculation verifies that $\rho_{V^{\boxtimes N}}$ is a $\wp_N(\hat{\theta})$-twisted Real representation of $\G \wr \Sigma_N$.
\end{Ex}

Example \ref{ex:symmPowerRealRepTw} globalizes as follows. Given a $(\pi,\hat{\theta})$-twisted $\Gh$-equivariant vector bundle $V \rightarrow X$, the external tensor product $V^{\boxtimes N} \rightarrow X^N$ is naturally a $(\pi,\wp_N(\hat{\theta}))$-twisted $\widehat{\G\wr\Sigma_N}$-equivariant vector bundle. The assignment $V \mapsto V^{\boxtimes N}$ extends to
\begin{equation*}
P^{R,\hat{\theta}}_N: {^{\pi}}K^{\bullet + \hat{\theta}}_{\Gh} (X) \rightarrow {^{\pi}}K^{\bullet + \wp_N(\hat{\theta})}_{\widehat{\G\wr\Sigma_N}} (X^N). 
\end{equation*}
The map $P^{R,\hat{\theta}}_N$ can be realized as the composition
\[
{^{\pi}}K^{\bullet+\hat{\theta}}_{\Gh}(X) \xrightarrow{\Delta} {^{\pi}}K^{\bullet+\hat{\theta}}_{\Gh}(X)\otimes_{\Z} \cdots \otimes_{\Z}{^{\pi}}K^{\bullet+\hat{\theta}}_{\Gh}(X) \rightarrow {^{\pi}}K^{\bullet+\hat{\theta}^N}_{\Gh^N}(X^N)
\]
whose first map is the diagonal and second is the $N$-fold composition of the K\"{u}nneth morphisms of Section \ref{kunnethfmk}. Note that the composition factors through ${^{\pi}}K^{\bullet + \wp_N(\hat{\theta})}_{\widehat{\G\wr\Sigma_N}} (X^N)$. Set $P^{R,\hat{\theta}}_0(V)=1$ for all $V \in {^{\pi}}K^{\bullet + \hat{\theta}}_{\Gh} (X)$.

\begin{Prop}
Let $V \in V \in {^{\pi}}K^{\bullet + \hat{\theta}}_{\Gh} (X)$ and $W\in {^{\pi}}K^{\bullet+\hat{\eta}}_{\Gh}(X)$. The operations $\{P^{R, \hat{\theta}}_N\}_{N \geq 0}$ have the following properties:
\begin{enumerate}[wide,labelwidth=!, labelindent=0pt,label=(\roman*)]
\item $P^{R, \hat{\theta}}_0(V)=1$ and $P^{R, \hat{\theta}}_1(V)=V$.

\item \label{FMKPowerExtProd} The (external) product of two operations is
\[P^{R, \hat{\theta}}_M(V) \boxtimes P^{R, \hat{\theta}}_N(V) = \Res^{\G\wr\Sigma_{M+N}}_{\G\wr(\Sigma_M\times \Sigma_N)} P^{R, \hat{\theta}}_{M+N}(V).\]

\item The composition of two operations is 
\[P^{R, \wp_N(\hat{\theta})}_M(P^{R, \hat{\theta}}_N(V)) = \Res^{\G\wr\Sigma_{MN}}_{\G\wr(\Sigma_N\wr\Sigma_M)} P^{R, \hat{\theta}}_{MN}(V). \]

\item The operations preserve external products:
\[P^{R, \hat{\theta}\hat{\eta}}_N(V\boxtimes W) = \Res^{\G\wr(\Sigma_N\times \Sigma_N)}_{\G\wr\Sigma_N} (P^{R, \hat{\theta}}_N(V)\boxtimes P^{R, \hat{\eta}}_N(W)).
\]
\end{enumerate}
\label{FMKPower}\end{Prop}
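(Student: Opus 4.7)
The plan is to verify each identity at the level of $(\pi,\hat{\theta})$-twisted $\Gh$-equivariant vector bundles, then pass to $K$-theory classes. Since $P^{R,\hat{\theta}}_N$ is induced by the bundle operation $V \mapsto V^{\boxtimes N}$ on $X^N$, and both sides of each identity are additive in the input (being obtained from K\"{u}nneth-type composites as in Section \ref{kunnethfmk}), it suffices to construct, for each part, a canonical isomorphism of the underlying complex vector bundles and to check that it intertwines the relevant twisted equivariant structures. Crucially, the compatibility of twists on the two sides is exactly the content of the corresponding item of Lemma \ref{twistprop}, so the work reduces to matching group actions under identifications of index sets.

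Part (i) is immediate from the definition. For part (ii), the underlying bundle on both sides is $V^{\boxtimes(M+N)}$ on $X^{M+N}$. On the left, we tensor the $\widehat{\G\wr\Sigma_M}$-action on $V^{\boxtimes M}$ with the $\widehat{\G\wr\Sigma_N}$-action on $V^{\boxtimes N}$; on the right, we restrict the $\widehat{\G\wr\Sigma_{M+N}}$-action on $V^{\boxtimes(M+N)}$ along $\iota$. These agree at the level of complex linear maps because $\iota$ is literally concatenation of tuples and block-diagonal sum of permutations, and the associated twists match by Lemma \ref{twistprop}(ii). Part (iv) runs in parallel: after identifying $(V\otimes W)^{\boxtimes N}$ with $V^{\boxtimes N}\otimes W^{\boxtimes N}$ via the canonical shuffle of tensor factors, the two $\widehat{\G\wr\Sigma_N}$-actions visibly agree, and the twists match by Lemma \ref{twistprop}(iv).

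Part (iii) is the principal obstacle. It requires a careful identification of the tensor factors of $(V^{\boxtimes N})^{\boxtimes M}$, indexed by $\{j_k \mid 1 \leq j \leq M,\, 1 \leq k \leq N\}$, with those of $V^{\boxtimes MN}$, indexed by $\{1,\dots,MN\}$, and a verification that the $\widehat{\G\wr(\Sigma_N\wr\Sigma_M)}$-action obtained by iterating the tensor power operation matches the restriction of the $\widehat{\G\wr\Sigma_{MN}}$-action along $\iota$ under precisely the combinatorial convention \eqref{eq:bigSymmAct} used in Lemma \ref{twistprop}(iii). The plan is to repeat that same bookkeeping, now at the level of bundle maps rather than cochains: one tracks how an element $(\underline{f};\sigma) \in \widehat{\G\wr(\Sigma_N\wr\Sigma_M)}$, with $f_i = (\underline{g_i};\tau_i)$, acts on $V^{\otimes MN}$ via the outer then inner tensor power, and compares with the action of its image in $\widehat{\G\wr\Sigma_{MN}}$. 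Once the factor-by-factor action is shown to coincide, the twist identity is supplied verbatim by Lemma \ref{twistprop}(iii).

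Passing from the bundle identities to the stated identities in $K$-theory is then automatic: both sides of each equation arise as the class of the same $(\pi,\wp_\bullet(\hat{\theta}))$-twisted equivariant vector bundle on the appropriate product of $X$, and the statements are preserved by the natural operations of external tensor product and restriction along the relevant subgroup inclusions.
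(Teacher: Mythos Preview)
Your proposal is correct and follows the same strategy as the paper's proof: reduce to the level of twisted equivariant vector bundles, invoke Lemma~\ref{twistprop} to match the cocycle twists on the two sides, and exhibit the canonical bundle isomorphisms intertwining the group actions. Your treatment is in fact more explicit than the paper's, which dispatches parts (iii) and (iv) as ``analogous'' without spelling out the index bookkeeping you outline for part (iii).

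One small correction: your parenthetical that ``both sides of each identity are additive in the input'' is not right---the operations $P^{R,\hat{\theta}}_N$ are not additive for $N\geq 2$. This does not affect the substance of your argument, since you never actually use additivity; the reduction to bundle-level verification is justified instead by the fact that both sides are natural composites of the diagonal, K\"{u}nneth maps, and restriction along subgroup inclusions, all of which are compatible with the passage from bundles to $K$-classes. The paper simply asserts that checking on bundles suffices, without elaborating.
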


\begin{proof}
The first statement is immediate. It suffices to verify the remaining statements at the level of twisted equivariant vector bundles. For part \ref{FMKPowerExtProd}, let $V \rightarrow X$ and $W \rightarrow Y$ be $(\pi,\hat{\theta})$-twisted and $(\pi,\hat{\eta})$-twisted $\Gh$-equivariant vector bundles, respectively. By Proposition \ref{twistprop}, the cocycle twists on each side of each claimed identity are equal. Moreover, the canonical isomorphism of $\pi$-twisted $\widehat{\G \wr (\Sigma_M \times \Sigma_N)}$-equivariant vector bundles $V^{\boxtimes M} \boxtimes V^{\boxtimes N} \simeq V^{\boxtimes (M+N)}$ lifts to the the twisted case. The remaining parts are analogous.
\end{proof}

\subsection{Power operations for Real quasi-elliptic cohomology}\label{QEllRPower}

Power operations for $\Q^{\bullet}$ were constructed in \cite[\S 4.1]{huan2018b} using power operations for orbifold $K$-theory \cite{ganter2013} and loop spaces of symmetric power groupoids. We generalize these ideas to construct power operations for $\QR^{\bullet}$.

Let a $\Z_2$-graded compact Lie group $\Gh$ act on a manifold $X$. The power operation will take the form of maps
\begin{equation*}
\mathbb{P}^R_N=\prod_{(\underline{g}; \sigma)\in
\pi_0((\G\wr\Sigma_N)^{\tor} \git \G\wr\Sigma_N)}\mathbb{P}^R_{ (\underline{g}; \sigma)}:
\QR^{\bullet}_\G(X)\rightarrow \QR^{\bullet}_{\G\wr\Sigma_N}(X^N),
\qquad
N \in \Z_{\geq 0}
\end{equation*}
where $\mathbb{P}^R_{ (\underline{g}; \sigma)}$ is the composition
\begin{multline}
\QR^{\bullet}_{\G}(X) \xrightarrow[]{U^*_R}
{^{\pi}}K^{\bullet}(\Lambda^{\refl,1}_{(\underline{g}; \sigma)}(X))
\xrightarrow[]{\scaleLam k}
{^{\pi}}K^{\bullet}(\Lambda^{\refl,\var}_{(\underline{g}; \sigma)}(X)) \xrightarrow[]{\boxtimes}
\\ {^{\pi}}K^{\bullet}(d^{\refl}_{(\underline{g};\sigma)}(X)) \xrightarrow[]{f^*_{(\underline{g};\sigma)}} {^{\pi}}K^{\bullet }_{\Lambda^R_{\widehat{\G\wr\Sigma_N}}(\underline{g}; \sigma)}((X^N)^{(\underline{g}; \sigma)}).
\label{pgs2r}
\end{multline}
The groupoids and morphisms appearing in this composition are defined in the following sections.

\subsubsection{The funtors $U_R$ and $\scaleLam k$} \label{construrescale}

We require some preliminary definitions. Fix an integer $k \geq 1$.

\begin{Def}[{\cite[Definition 4.1]{huan2018b}}]
Let $\Lambda^k(X \git  \G)$ be the groupoid with the same objects as
$\Lambda(X\git \G)$, that is, pairs $(x,g) \in X \times \G^{\tor}$ with $x \in X^g$, and morphisms
\[
\Hom_{\Lambda^k(X \git  \G)}((x,g),(x^{\prime},g^{\prime})) = \Lambda^k_{\G}(g, g')
\]
where $\Lambda^k_{\G}(g, g')$ is the quotient of $\mathbb{R} \times T_{\G}(g, g')$ by the equivalence relation generated by $(t,h)\sim (t-k,gh)$.
\label{lambdaoidefk}
\end{Def}

Fix $\omega\in \Gh\setminus \G$. Let $(\iota_{\omega,k}, \Theta_{\omega,k})$ be the involution of $\Lambda^k(X\git \G)$ given on objects and morphisms by $\iota_{\omega, k}(x, g) = (x\omega, \omega^{-1}g^{-1}\omega)$ and $\iota_{\omega, k}([t,h]) = [-t,  \omega^{-1} h^{-1} \omega]$, respectively, and such that the component of $\Theta_{\omega, k}$ at $(x,g)$ is $[0,\omega^2]$. 

\begin{Def}
Let $\Lambda_{\pi}^{\refl, k} (X \git \Gh)$ be the quotient groupoid $\Lambda^k(X \git \G) \git (\iota_{\omega, k},\Theta_{\omega, k})$.
\end{Def}

The obvious morphism $\Lambda_{\pi}^{\refl, k} (X \git \Gh) \rightarrow B(\mathbb{R} \slash k \Z \rtimes_{\pi} \Z_2)$ induces a $B \mathsf{O}_2$-grading
\begin{equation}
\label{eq:BOGradingLambda}
\Lambda_{\pi}^{\refl, k} (X \git \Gh) \rightarrow B(\mathbb{R} \slash k \Z \rtimes \Z_2) \rightarrow B(\mathbb{R} \slash \Z \rtimes \Z_2) \simeq B \mathsf{O}_2
\end{equation}
which recovers that of $\Lambda_{\pi}^{\refl} (X \git \Gh)$ when $k=1$. In particular, $\Lambda_{\pi}^{\refl, k} (X \git \Gh)$ is $B \Z_2$-graded.


\begin{Def}
Let $\scale k :\Lambda_{\pi}^{\refl, k}(X\git\Gh)\rightarrow \Lambda_{\pi}^{\refl}(X\git\Gh)$ be the functor which is the identity on objects and sends a morphism $[t,h]$ to $[\frac{t}{k},h]$.
\end{Def}

Pullback along $\scale k$ is a map $\scale k: KR^{\bullet}(\Lambda(X \git \G))\rightarrow KR^{\bullet}(\Lambda^k(X \git \G))$. The composition law $(\scale k)_{k^{\prime}}=\scale {kk^{\prime}}$ holds.

We require a wreath product of the groupoid $\Lambda_{\pi}^{\refl, k}(X \git \Gh)$, which is a Real version of the wreath product $\Lambda^k(X \git \G)\wr_{\mathbb{T}}\Sigma_N$ defined in \cite[Definition 4.2]{huan2018b}.

\begin{Def}
Let $\Lambda_{\pi}^{\refl, k}(X \git \Gh)\wr_{\mathsf{O}_2}\Sigma_N$ be the subgroupoid of $\Lambda_{\pi}^{\refl, k}(X \git \Gh)\wr\Sigma_N$ on morphisms $([t_1,h_1], \dots, [t_N,h_{N}];\tau)$ for which all $[t_i,h_i]$ have the same image in $B\mathsf{O}_2$ with respect to the $B\mathsf{O}_2$-grading \eqref{eq:BOGradingLambda}.
\end{Def}

Before we construct the groupoid $\Lambda_{\pi, (\underline{g}; \sigma)}^{\refl,1}(X)$, we introduce some notation. Given $g, g^{\prime} \in \G$, define $T^R_{\Gh}(g, g^{\prime})=\{\omega \in \Gh \mid \omega g^{\pi(\omega )}=g^{\prime} \omega\}$. For each integer $r \geq 1$, let $\Lambda^{R, r}_{\Gh}(g, g')$ be the quotient of $\mathbb{R} \times T^{R}_{\Gh}(g, g')$ by the equivalence relation generated by $(t,h)\sim (t-r,gh)$.

\begin{Def}
For each $(\underline{g}; \sigma)\in (\G\wr\Sigma_N)^{\tor}$, let $\Lambda_{\pi, (\underline{g};\sigma)}^{\refl,r}(X)$ be the groupoid 
with objects $\bigsqcup_k \bigsqcup_{i \in \Cyc_k(\sigma)}X^{g_{i_k}\cdots g_{i_1}}$, where $k \in \{1, \dots, N\}$, and morphisms
$$\bigsqcup_k \bigsqcup_{i,j \in \Cyc_k(\sigma)}\Lambda^{R, r}_{\Gh}(g_{i_k}\cdots g_{i_1}, g_{j_k}\cdots
g_{j_1})\times X^{g_{i_k}\cdots g_{i_1}}.$$ 
\end{Def}

There is a $B \Z_2$-graded equivalence $\Lambda_{\pi, (\underline{g}; \sigma)}^{\refl,r}(X) \simeq \Lambda^r_{(\underline{g}; \sigma)}(X) \git (\iota_{\omega, r},\Theta_{\omega, r})$, where $\Lambda^r_{(\underline{g}; \sigma)}(X) $ is the groupoid defined in \cite[\S4.2]{huan2018}.

\begin{Def}
Let
$
U_R:\Lambda_{\pi, (\underline{g}; \sigma)}^{\refl,1}(X)\rightarrow \Lambda_{\pi}^{\refl}(X \git
\Gh)
$
be the functor which sends an object $x \in X^{g_{i_k}\cdots g_{i_1}}$ to $x \in X^{g_{i_k}\cdots g_{i_1}}$ and a morphism $[t,h]$ to $[t,h]$.
\end{Def}

It is immediate that $U_R$ is $B \Z_2$-graded.

\begin{Def}
Let $\Lambda^{\refl, \var}_{\pi, (\underline{g}; \sigma)}(X)$ be the groupoid with the same objects as $\Lambda^{\refl, 1}_{\pi, (\underline{g}; \sigma)}(X)$ and morphisms
$$\bigsqcup_k \bigsqcup_{i,j \in \Cyc_k(\sigma)}\Lambda^{R,k}_{\Gh}(g_{i_k}\cdots g_{i_1}, g_{j_k}\cdots
g_{j_1})\times X^{g_{i_k}\cdots g_{i_1}}. $$
\end{Def}

\begin{Def}
Let
$
\scaleLam k: \Lambda^{\refl, \var}_{\pi, (\underline{g};\sigma)}(X)\rightarrow \Lambda^{\refl, 1}_{\pi, (\underline{g}; \sigma)}(X)
$
be the functor which is the identity on objects and sends a morphism $[t,g]$ to $[\frac{t}{k},g]$.
\end{Def}

We also denote by $\scaleLam k$ the pullback
$
\scaleLam k: KR^{\bullet}(\Lambda^1_{(\underline{g}; \sigma)}(X))\rightarrow KR^{\bullet}(\Lambda^{\var}_{(\underline{g};\sigma)}(X)).
$

\subsubsection{The equivalence $f_{(\underline{g}; \sigma)}$} \label{constrfgsigma}

We begin by discussing the Real centralizer $C^R_{\widehat{\G\wr\Sigma_N}}(\underline{g}; \sigma)$.

\begin{Ex}
Let $(\underline{g}; \sigma) \in \G\wr\Sigma_N$. An element $(\underline{h};\tau)\in \widehat{\G\wr\Sigma_N}$ is in $C^R_{\widehat{\G\wr\Sigma_N}}(\underline{g}; \sigma)$ if and only if one of the following conditions hold:
\begin{enumerate}[wide,labelwidth=!, labelindent=0pt]
\item $\pi(\underline{h};\tau) = 1$ and $\tau\sigma=\sigma\tau$ and $g_{\sigma(\tau(i))}h_{\tau(i)}=h_{\tau(\sigma(i))}g_{\sigma(i)}$ for all $i=1, \dots, N$. In this case, there is a bijection $\tau: \Cyc_k(\sigma) \rightarrow \Cyc_k(\sigma)$, sending $i=(i_1,\dots, i_k)$ to $\tau(i)=j=(j_1, \dots, j_k)$, where  $\tau(i_l)=j_{l+m_i}$, $l \in \Z_k$, for some $m_i$ which depends only on $\tau$ and $i$. It follows that $g_{j_l}h_{j_{l-1}}=h_{j_l}g_{i_{l-m_i}}$, $l \in \Z_k$ and that
\begin{equation}
\beta^{(\underline{h};\tau)}_{j,i}:= h_{j_k}g^{-1}_{i_{1-m_i}}\cdots
g^{-1}_{i_{k-1}}g^{-1}_{i_k}=g^{-1}_{j_1}\cdots g^{-1}_{j_{m_i}} h_{j_{m_i}}
\in 
T_{\G} (g_{j_k}\cdots g_{j_1}, g_{i_k}\cdots g_{i_1})
\label{betaDef}
\end{equation}
with $\pi({\beta}^{(\underline{h}; \tau)}_{j,i})= \pi(\underline{h};\tau)$.

\item $\pi(\underline{h};\tau) = -1$
and 
$\tau\sigma^{-1}=\sigma\tau$ 
and $g_{\tau(\sigma(i))} h_{\tau(i)} = h_{ \tau(\sigma(i))} g^{-1}_{i}$ for all $i =1, \dots, N$. In this case, $\tau$ sends a $k$-cycle $i=(i_1,\dots, i_k)$ of $\sigma$ to the reverse of another $k$-cycle, say $\tau(i)=j=(j_1, \dots, j_k)$, so that $\tau(i_l)=j_{-l+m_i}$, $l\in \Z_k$, for some $m_i$. It follows that $g_{j_r} h_{j_{r-1}} = h_{j_r} g^{-1}_{i_{m_i+1-r}}$, $r \in \Z_k$, from which we deduce
\begin{equation*}
    h_{j_k}g_{i_{m_i}}\cdots g_{i_{m_i+1-r}} = g_{j_1}^{-1}\cdots g_{j_r}^{-1} h_{j_{r}},
    \qquad r \in \Z_k
\end{equation*}
and the other $h_{j_r}$ are determined by $h_{j_k}$ and $\underline{g}$. Moreover,
\begin{equation*} \hat{\beta}^{(\underline{h}; \tau)}_{j,i}:= h_{j_k}g_{i_{m_i}}\cdots g_{i_{1}} = g_{j_1}^{-1}\cdots g_{j_{m_i}}^{-1} h_{j_{m_i}} \in T_{\Gh}^R(g_{j_k}\cdots g_{j_1}, g_{i_k}\cdots g_{i_1})
\label{hbetadef}
\end{equation*}
with $\pi(\hat{\beta}^{(\underline{h}; \tau)}_{j,i})= \pi(\underline{h};\tau)$.
\qedhere
\end{enumerate}
\end{Ex}

\begin{Ex}
Let $\sigma\in\Sigma_N$ correspond to the partition $N = \sum_k kN_k$, so that $\sigma$ has exactly $N_k$ $k$-cycles. Assume that each $k$-cycle is written as $(i_1, \dots, i_k)$ with $i_1< \cdots < i_k$.
Set $\underline{N} = \{1,\dots, N\}$. Consider the orbits of the bundle $\G\times\underline{N}\rightarrow\underline{N}$ under $(\underline{g}; \sigma)\in \G\wr\Sigma_N$. The $\sigma$-orbits of $\underline{N}$ correspond to cycles of $\sigma$. Correspondingly, $\G\times\underline{N}\rightarrow\underline{N}$ is a disjoint union
\[
\bigsqcup_k \bigsqcup_{i \in \Cyc_k(\sigma)} (\G \times i \rightarrow i)
\]
and each $\G\times i \rightarrow i$ is a $(\underline{g}; \sigma)$-orbit. Two $\G$-bundles
\[
\G\times \{i_1, \dots, i_k\}\rightarrow \{i_1, \dots,i_k\},
\qquad
\G\times \{j_1, \dots, j_l\}\rightarrow \{j_1, \dots, j_l\}
\]
are Real $(\underline{g}; \sigma)$-isomorphic if and only if $k=l$ and $T^R_{\Gh}(g_{i_k}\cdots g_{i_1},g_{j_k}\cdots g_{j_1}) \neq \varnothing$. Let $W^{\sigma}_i$ denote the set of all the $\G$-subbundles $\G\times j \rightarrow j$ which are Real $(\underline{g};\sigma)$-isomorphic to $\G\times i \rightarrow i$. The discussion above shows that $W^{\sigma}_i$ is in bijection with
\[
\{j \in \Cyc_k(\sigma) \mid T^R_{\Gh}(g_{i_k}\cdots g_{i_1}, g_{j_k}\cdots g_{j_1}) \neq \varnothing\}.
\]
Let $M^{\sigma}_{i}$ be the cardinality of $W^{\sigma}_i$ and $\alpha^i_1, \dots, \alpha^i_{M^{\sigma}_i}$ the elements of $W^{\sigma}_i$, labelled so that $i=\alpha^i_1 \in W^{\sigma}_i$. Note that if $T^R_{\Gh}(g_{i_k}\cdots g_{i_1}, g_{j_k}\cdots g_{j_1}) = T_{\G}(g_{i_k}\cdots g_{i_1}, g_{j_k}\cdots g_{j_1})$, then $W^{\sigma}_i$ reduces to the set defined in \cite[\S4]{huan2018b}.

Let $i \in \Cyc_k(\sigma)$ and $j \in \Cyc_l(\sigma)$. If $k=l$ and $T^R_{\Gh}(g_{i_k}\cdots g_{i_1}, g_{j_k}\cdots g_{j_1}) \neq \varnothing$, then $W^{\sigma}_i = W^{\sigma}_j$ and $W^{\sigma}_i$ and $W^{\sigma}_j$ are disjoint otherwise. Fix representatives $\theta_k$ of $\Cyc_k(\sigma)$ such that $\Cyc_k(\sigma) = \bigsqcup_{i\in\theta_k}W^{\sigma}_i.$
\end{Ex}

Given $B \mathbb{T}$-graded groupoids $p_{\mathbb{G}}:\mathbb{G} \rightarrow B \mathbb{T}$ and $p_{\mathbb{H}}: \mathbb{H} \rightarrow B\mathbb{T}$, let $\mathbb{G} \times_{\mathbb{T}} \mathbb{H}$ be the subgroupoid of $\mathbb{G} \times \mathbb{H}$ on morphisms of the form $(f,g)$ with $p_{\mathbb{G}}(f)=p_{\mathbb{H}}(g)$. If instead $\mathbb{G}$ and $\mathbb{H}$ are $B \mathsf{O}_2$-graded, then there is a fibre product $\mathbb{G} \times_{\mathsf{O}_2} \mathbb{H}$.

\begin{Def} 
Let $(\underline{g}; \sigma)\in (\G\wr\Sigma_N)^{\tor}$.
\begin{enumerate}[wide,labelwidth=!, labelindent=0pt,label=(\roman*)]
\item (See \cite[\S4]{huan2018b}.) Let $d_{(\underline{g}; \sigma)}(X)$ be the full subgroupoid of
\begin{equation}
\label{eq:fibProfGrpd}
\prod\limits_{k}\!{_{\mathbb{T}}}\prod\limits_{i\in\theta_k}\!{_{\mathbb{T}}}\Lambda^k(X \git \G)\wr_{\mathbb{T}}\Sigma_{M^{\sigma}_i}
\end{equation}
with objects $\prod_k\prod_{ i \in \Cyc_k(\sigma)} X^{g_{i_k}\cdots g_{i_1}}$. 

\item Let $d^{\refl}_{(\underline{g}; \sigma)}(X)$ be the full subgroupoid of
\[
\prod\limits_{k}\!{_{\mathsf{O}_2}}\prod\limits_{i\in\theta_k}\!{_{\mathsf{O}_2}}\Lambda_{\pi}^{\refl, k}(X\git \Gh)\wr_{\mathsf{O}_2}\Sigma_{M^{\sigma}_i}
\]
with objects $\prod_k\prod_{i \in \Cyc_k(\sigma)}X^{g_{i_k}\cdots g_{i_1}}$. 
\end{enumerate}
\label{ADgroupoid}
\end{Def}

The involution of $\Lambda^k(X \git \G)$ induced by $\Gh$ induces one on the groupoid \eqref{eq:fibProfGrpd} under which $d_{(\underline{g};\sigma)}$ is stable if and only if $(\underline{g};\sigma) \in \pi_0((\G\wr\Sigma_N)^{\tor} \git \G \wr\Sigma_N)_{-1}$. In particular, $d^{\refl}_{(\underline{g}; \sigma)}(X)$ is canonically $B \mathsf{O}_2$-graded, and hence $B\Z_2$-graded.

\begin{Thm} 
\begin{enumerate}[wide,labelwidth=!, labelindent=0pt,label=(\roman*)]
\item For each $(\underline{g}; \sigma)\in \pi_0((\G\wr\Sigma_N)^{\tor} \git \G\wr\Sigma_N)$, there is an isomorphism $f_{(\underline{g}; \sigma)}$ such that, for any $\omega\in \widehat{\G\wr\Sigma_N}\setminus \G\wr\Sigma_N$, the following diagram commutes:
\begin{equation}\label{fgsfree}
\begin{tikzpicture}[baseline= (a).base]
\node[scale=1.0] (a) at (0,0){
\begin{tikzcd}[column sep=5.5em,row sep=1.5em]
(X^N)^{(\underline{g};\sigma)} \git \Lambda_{\G\wr\Sigma_N}(\underline{g}; \sigma) \arrow{r}[above]{f_{(\underline{g}; \sigma)}} \arrow{d}[left]{\iota_{\omega}}& d_{(\underline{g}; \sigma)}(X) \arrow{d}[right]{\iota_{\omega}} \\
(X^N)^{\omega^{-1}(\underline{g};\sigma)^{-1}\omega}\git\Lambda_{\G\wr\Sigma_N}(\omega^{-1}(\underline{g}; \sigma)^{-1}\omega) \arrow{r}[below]{f_{\omega^{-1}(\underline{g}; \sigma)^{-1}\omega}} & d_{\omega^{-1}(\underline{g}; \sigma)^{-1}\omega}(X).
\end{tikzcd}
};
\end{tikzpicture}
\end{equation}

\item For each $(\underline{g}; \sigma)\in \pi_0((\G\wr\Sigma_N)^{\tor} \git \G\wr\Sigma_N)$, the isomorphism $f_{(\underline{g};\sigma)}$ induces a $B \Z_2$-graded equivalence $f_{(\underline{g};\sigma)}: (X^N)^{(\underline{g};\sigma)}\git\Lambda^R_{\widehat{\G\wr\Sigma_N}}(\underline{g}; \sigma) \xrightarrow[]{\sim} d^{\refl}_{(\underline{g}; \sigma)}(X)$.
\end{enumerate}
\end{Thm}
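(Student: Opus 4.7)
The plan is to build directly on the complex analogue of $f_{(\underline{g};\sigma)}$ constructed in \cite[\S 4]{huan2018b}, which already provides a $B\mathbb{T}$-graded equivalence between the two groupoids in the top row of \eqref{fgsfree}. Recall its explicit form: on objects, $(x_1,\ldots,x_N) \in (X^N)^{(\underline{g};\sigma)}$ is sent to the subtuple indexed by cycle representatives, the fixed-point condition forcing $x_{i_1} \in X^{g_{i_k}\cdots g_{i_1}}$; on morphisms, $[t,(\underline{h};\tau)]$ is sent to the collection $\{[t,\beta^{(\underline{h};\tau)}_{j,i}]\}$ using the elements $\beta^{(\underline{h};\tau)}_{j,i}$ from \eqref{betaDef}. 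Since this functor is already known to be fully faithful and essentially surjective, the content of the theorem is the equivariance with respect to the involutions coming from $\omega \in \widehat{\G\wr\Sigma_N}\setminus \G\wr\Sigma_N$.

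For part (i), I would verify \eqref{fgsfree} by direct computation. On the source, $\iota_{\omega}$ acts on objects by $(x_1,\ldots,x_N)\mapsto (x_1\omega,\ldots,x_N\omega)$ and on morphisms by $[t,(\underline{h};\tau)]\mapsto [-t,\omega^{-1}(\underline{h};\tau)^{-1}\omega]$, landing in the data for $\omega^{-1}(\underline{g};\sigma)^{-1}\omega$. On the target, $\iota_{\omega}$ is induced componentwise from the involutions on each $\Lambda^k(X \git \G)\wr_{\mathbb{T}}\Sigma_{M^{\sigma}_i}$, so conjugation by $\omega$ may also permute cycle representatives among the sets $W^{\sigma}_i$. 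The required identity reduces to checking that, after applying $f_{(\underline{g};\sigma)}$, the $i_1$-slot of the image agrees with the appropriate slot of the image of $\iota_{\omega}(x_1,\dots, x_N)$ under $f_{\omega^{-1}(\underline{g};\sigma)^{-1}\omega}$, which follows from the identity $\omega^{-1} (g_{i_k}\cdots g_{i_1})^{-1}\omega = (\omega^{-1}g_{i_1}^{-1}\omega)\cdots(\omega^{-1}g_{i_k}^{-1}\omega)$ and the bijection on cycle representatives induced by $\omega$-conjugation and cycle reversal.

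For part (ii), the source in \eqref{fgsfree} admits $(X^N)^{(\underline{g};\sigma)} \git \Lambda^R_{\widehat{\G\wr\Sigma_N}}(\underline{g};\sigma)$ as its quotient by $(\iota_\omega, \Theta_\omega)$, and by Definition \ref{ADgroupoid} the target $d^{\refl}_{(\underline{g};\sigma)}(X)$ is the analogous quotient of $d_{(\underline{g};\sigma)}(X)$. The commutativity established in (i) together with a check that $f_{(\underline{g};\sigma)}$ intertwines the natural transformations $\Theta_\omega$ (a straightforward computation since both $\Theta_\omega$'s are given by $\omega^2$) shows that $f_{(\underline{g};\sigma)}$ descends to a functor between quotient groupoids; the Lemma \ref{lem:invTorsor}-style torsor structure ensures independence of the chosen $\omega$ up to canonical $2$-isomorphism. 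The descended functor inherits fully faithful and essentially surjective from its complex predecessor, and the grading compatibility follows from the observation that the $B\mathsf{O}_2$-grading on the left (given by the projection $\Lambda^R_{\widehat{\G\wr\Sigma_N}}(\underline{g};\sigma) \rightarrow \mathsf{O}_2$ of diagram \eqref{diag:RealEnhancedTwistStab}) matches the $B\mathsf{O}_2$-grading on the right (given by the fibre product over $\mathsf{O}_2$) factor-by-factor.

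The main obstacle will be the $\pi(\underline{h};\tau) = -1$ case of the morphism-level verification in part (i), since there $\tau$ sends a $k$-cycle $i$ to a \emph{reversed} $k$-cycle $j$, the elements $\hat{\beta}^{(\underline{h};\tau)}_{j,i}$ involve the shift parameters $m_i$, and the loop rotation component $t$ undergoes a sign change corresponding to reflection in $\mathsf{O}_2$. Matching these against the involution on each wreath factor $\Lambda^k(X\git\G)\wr_{\mathbb{T}}\Sigma_{M^{\sigma}_i}$ requires careful simultaneous tracking of the cycle reversal, the shift $m_i$, and the $\mathsf{O}_2$-grading, but reduces to an explicit (if tedious) combinatorial identity in $\widehat{\G\wr\Sigma_N}$.
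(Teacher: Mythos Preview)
Your proposal is correct and follows essentially the same route as the paper: recall the explicit $f_{(\underline{g};\sigma)}$ from \cite[Theorem 4.9]{huan2018b}, verify diagram \eqref{fgsfree} by direct computation on objects and morphisms, then pass to quotients for part (ii). The paper streamlines the computation by taking $\omega=(\eta,\dots,\eta;1)$ without loss of generality, and the key identity it isolates is $\beta^{(\eta^{-1}\underline{h}^{-1}\eta;\tau^{-1})}_{\tau^{-1}(j),j}=\eta^{-1}(\beta^{(\underline{h};\tau)}_{-\tau(j),-j})^{-1}\eta$; one small mislabeling in your write-up is that in part (i) the morphisms $(\underline{h};\tau)$ live in the ungraded $\Lambda_{\G\wr\Sigma_N}(\underline{g};\sigma)$, so there is no $\pi(\underline{h};\tau)=-1$ case there---the cycle reversal and sign change you correctly anticipate arise instead from applying $\iota_\omega$, which replaces $(\underline{g};\sigma)$ by $\omega^{-1}(\underline{g};\sigma)^{-1}\omega$ and hence $\sigma$ by $\sigma^{-1}$.
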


\begin{proof}
\begin{enumerate}[wide,labelwidth=!, labelindent=0pt,label=(\roman*)]
\item Recall from \cite[Theorem 4.9]{huan2018b} that the isomorphism $f_{(\underline{g};\sigma)}$ is defined on an object $x=(x_1, \dots, x_N)\in (X^N)^{(\underline{g}; \sigma)}$ by
\[
f_{(\underline{g}; \sigma)}(x) = \prod_k\prod_{i \in \Cyc_k(\sigma)}x_{i_k}
\]
and on a morphism $[t,(\underline{h};\tau)] \in \Lambda_{\G \wr \Sigma_N}(\underline{g};\sigma)$ by
\[
f_{(\underline{g}; \sigma)}([t,(\underline{h}; \tau)])
=
\prod_k \prod_{i\in\theta_k}([m_1+t,\beta^{(\underline{h}; \tau)}_{\tau(1), 1}], \dots, [m_{M^{\sigma}_i}+t,\beta^{(\underline{h}; \tau)}_{\tau(M^{\sigma}_i), M^{\sigma}_i}];\tau_{|{W^{\sigma}_i}}).
\]
Here $\tau_{|{W^{\sigma}_i}}$ denotes the permutation induced by $\tau$ on the set $W^{\sigma}_i=\{\alpha^i_1, \alpha^i_2, \dots, \alpha^i_{M^{\sigma}_i}\}$ and $\tau(i_l)=j_{l+m_i}$.

Without loss of generality, we may assume that $\omega = (\eta, \dots, \eta; 1)\in \widehat{\G\wr\Sigma_N}\setminus \G\wr\Sigma_N$ for some $\eta\in \Gh\setminus \G$, so that $\omega^{-1}(\underline{g}; \sigma)^{-1}\omega= (\eta^{-1}g_{\sigma(1)}^{-1}\eta, \dots, \eta^{-1} g_{\sigma(N)}^{-1}\eta; \sigma^{-1})$. At the level of objects, the clockwise and counterclockwise compositions of diagram \eqref{fgsfree} send an object $x \in (X^N)^{(\underline{g};\sigma)}$ to
\[
x \mapsto \prod_k\prod_{i \in \Cyc_k(\sigma)}x_{i_k} \mapsto \prod_k\prod_{i \in \Cyc_k(\sigma)}x_{i_k} \eta
\]
and
\[
x \mapsto x\omega = (x_1\eta, \dots, x_N\eta) \mapsto \prod_k\prod_{i \in \Cyc_k(\sigma)}x_{i_k} \eta,
\]
respectively. Similarly, the clockwise and counterclockwise compositions send a morphism $[t,(\underline{h}; \tau)]$ to
\begin{multline*}
[t,(\underline{h}; \tau)] \mapsto
\prod_k \prod_{i\in\theta^{(\sigma)}_k}([m_1+t,\beta^{(\underline{h}; \tau)}_{\tau(1), 1}], \dots, [m_{M^{\sigma}_i}+t,\beta^{(\underline{h}; \tau)}_{\tau(M^{\sigma}_i), M^{\sigma}_i}];\tau_{|{W^{\sigma}_i}})
\mapsto \\
\prod_k \prod_{i^{-1}\in\theta^{(\sigma^{-1})}_k}([-m_{M^{\sigma}_i}-t,\eta^{-1} (\beta^{(\underline{h}; \tau)}_{\tau(M^{\sigma}_i), M^{\sigma}_i})^{-1} \eta], \dots, [-m_1-t,\eta^{-1} (\beta^{(\underline{h}; \tau)}_{\tau(1), 1})^{-1} \eta];\tau^{-1}_{|{W^{\sigma}_i}}),
\end{multline*}
where $\beta$ is defined using $\underline{g}$ via equation \eqref{betaDef}, and
\begin{multline*}
[t,(\underline{h}; \tau)]
\mapsto
[-t,\omega^{-1}(\underline{h}; \tau)^{-1}\omega]
\mapsto \\
\prod_k \prod_{i\in\theta^{(\sigma^{-1})}_k}([m^{\prime}_1-t,\beta^{(\eta^{-1}\underline{h}^{-1} \eta; \tau^{-1})}_{\tau^{-1}(1), 1}], \dots, [m^{\prime}_{M^{\sigma^{-1}}_i}-t,\beta^{(\eta^{-1} \underline{h}^{-1} \eta; \tau^{-1})}_{\tau^{-1}(M^{\sigma^{-1}}_i), M^{\sigma^{-1}}_i}];\tau^{-1}_{|{W^{\sigma^{-1}}_i}})
\end{multline*}
where now $\beta$ is defined using $\eta^{-1} \underline{g}^{-1}\eta$ via equation \eqref{betaDef}, respectively. We have $\tau^{-1}(i_l)=j_{l+m^{\prime}_l}$. There is a canonical bijection $W^{\sigma}_i \rightarrow W^{\sigma^{-1}}_{i^{-1}}$, so that $M_i^{\sigma} = M_{i^{-1}}^{\sigma^{-1}}$. It follows from the definitions that
\[
\beta^{(\eta^{-1}\underline{h}^{-1} \eta; \tau^{-1})}_{\tau^{-1}(j), j}
=
\eta^{-1} (\beta^{(\underline{h}; \tau)}_{-\tau(j), -j})^{-1} \eta.
\]
Identifying $i \in \theta^{(\sigma^{-1})}_k$ with the inverse of $i \in \theta_k^{(\sigma)}$, the associated component of the final expression agrees with that of the clockwise composition. It follows that the diagram commutes on morphisms, completing the proof.

\item By the discussion preceding the theorem, the statement is non-trivial only when $(\underline{g};\sigma) \in \pi_0((\G\wr\Sigma_N)^{\tor} \git \G\wr\Sigma_N)_{-1}$. In this case, it follows from the previous part and the fact that $(X^N)^{(\underline{g};\sigma)}\git\Lambda^R_{\widehat{\G\wr\Sigma_N}}(\underline{g}; \sigma)$ and $d^{\refl}_{(\underline{g}; \sigma)}(X)$ are quotients of $(X^N)^{(\underline{g};\sigma)}\git\Lambda_{\G\wr\Sigma_N}(\underline{g}; \sigma)$ and $d_{(\underline{g}; \sigma)}(X)$, respectively, by $(\iota_{\omega}, \Theta_{\omega})$ for any $\omega \in C_{\widehat{\G \wr \Sigma_N}}^R(\underline{g};\sigma) \setminus C_{\G \wr \Sigma_N}(\underline{g};\sigma)$. The map $f_{(\underline{g};\sigma)}$ then descends to the desired $B \Z_2$-graded equivalence.\qedhere
\end{enumerate}
\end{proof}

\subsubsection{The power operation  $\{\mathbb{P}^R_N\}_{N \geq 0}$} \label{QRPowerconstr}

Define
\[
\mathbb{P}^R_{ (\underline{g}; \sigma)}:
\QR^{\bullet}_{\G}(X) \rightarrow {^{\pi}}K^{\bullet }_{\Lambda^R_{\widehat{\G\wr\Sigma_N}}(\underline{g}; \sigma)}((X^N)^{(\underline{g}; \sigma)})
\]
as the composition \eqref{pgs2r}. In view of Proposition \ref{prop:QRGlobQuot}, the codomain of $\mathbb{P}^R_{ (\underline{g}; \sigma)}$ is a factor of $\QR^{\bullet}_{\G \wr \Sigma_N}(X^N)$ and we can view $\mathbb{P}^R_{ (\underline{g}; \sigma)}$ as a map $\QR^{\bullet}_{\G}(X) \rightarrow \QR^{\bullet}_{\G \wr \Sigma_N}(X^N)$.

\begin{Def}
For each integer $N \geq 0$, let
\begin{equation*}
\mathbb{P}^R_N :=\prod_{(\underline{g};\sigma)\in\pi_0((\G\wr\Sigma_N)^{\tor} \git_R \widehat{\G\wr\Sigma_N})}\mathbb{P}^R_{(\underline{g}; \sigma)}:
\QR^\bullet_{\G}(X)\rightarrow \QR^\bullet_{\G\wr\Sigma_N}(X^N).
\end{equation*}
\end{Def}

\begin{Thm}
\label{thm:powOpQR}
The operations $\{\mathbb{P}^R_N\}_{N\in \Z_{\geq 0}}$ have the following properties:
\begin{enumerate}[wide,labelwidth=!, labelindent=0pt,label=(\roman*)]
\item $\mathbb{P}^R_0(x)=1$ and $\mathbb{P}^R_1=\id$.

\item Let $x\in \QR^\bullet_{\G}(X)$, $(\underline{g}; \sigma)\in \G\wr\Sigma_N$ and $(\underline{h}; \tau)\in \G\wr\Sigma_M$. The external product of two operations satisfies
$$\mathbb{P}^R_{(\underline{g}; \sigma)}(x)\boxtimes \mathbb{P}^R_{(\underline{h}; \tau)}(x)=\Res^{\Lambda^{R}_{ \widehat{\G\wr\Sigma_{M+N}}}(\underline{g},\underline{h}; \sigma\tau)}_{\Lambda^{R}_{\widehat{\G\wr\Sigma_N}}(\underline{g};\sigma)\times_{\mathsf{O}_2}\Lambda^{R}_{\widehat{\G\wr\Sigma_M}}(\underline{h};\tau)} \mathbb{P}^R_{(\underline{g},\underline{h};\sigma\tau)}(x).$$

\item Let $(\underline{\underline{h}; \tau})\in (\widehat{\G\wr \Sigma_M})^N$ and $\sigma\in\Sigma_N$. The composition of two operations satisfies
\[
\mathbb{P}^R_{((\underline{\underline{h}; \tau}); \sigma)}(\mathbb{P}^R_M(x))=\Res^{\Lambda^{R}_{\widehat{\G\wr\Sigma_{MN}}}(\underline{h}; (\underline{\tau},\sigma))}_{\Lambda^{R}_{\widehat{(\G\wr\Sigma_M)\wr\Sigma_N}}((\underline{\underline{h};\tau}); \sigma)}\mathbb{P}^R_{(\underline{\underline{h}};(\underline{\tau}, \sigma))}(x).
\]

\item The operations preserve external products: if $\underline{(g,h)}\in (\G\times \Hh)^N$
and $\sigma\in \Sigma_N$, then
$$\mathbb{P}^R_{(\underline{(g, h)};\sigma)}(x\boxtimes y)=\Res^{\Lambda^{R}_{\widehat{\G\wr\Sigma_N}}(\underline{g}; \sigma)\times_{\mathsf{O}_2}\Lambda^{R}_{\widehat{\Hh\wr\Sigma_N}}(\underline{h}; \sigma)}
_{\Lambda^{R}_{\widehat{(\G\times \Hh)\wr\Sigma_n}}(\underline{(g, h)};\sigma)}\mathbb{P}^R_{(\underline{g};\sigma)}(x)\boxtimes \mathbb{P}^R_{(\underline{h}; \sigma)}(y).\qedhere$$ \label{main1p}
\end{enumerate}
\label{thm:QRPower}
\end{Thm}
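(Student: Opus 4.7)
The plan is to prove each of the four properties by reducing them to component-wise statements indexed by Real conjugacy classes in the relevant wreath products, and then to establish the component-wise statements by combining the power operation properties for twisted equivariant $KR$-theory from Proposition \ref{FMKPower} with the compatibility of the ingredients of the composition \eqref{pgs2r}: the diagonal-type functor $U_R$, the scalings $\scaleLam k$, the K\"{u}nneth/external product map $\boxtimes$ from Section \ref{kunnethfmk}, and the equivalences $f_{(\underline{g};\sigma)}$ constructed in Section \ref{constrfgsigma}. Because the theorem concerns the untwisted $\QR^{\bullet}$, the wreath-product twist cocycle calculations of Lemma \ref{twistprop} are not needed, which eliminates the bookkeeping of twisted transgression cocycles.

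For property (i), both statements are immediate from the construction: when $N=0$, every groupoid in \eqref{pgs2r} collapses to a point and $\mathbb{P}^R_0(x) = 1$; when $N=1$, each of $U_R$, $\scaleLam 1$, $\boxtimes$, and $f_{(g;1)}$ reduces to an identity functor, so the composition is the identity map on $\QR^{\bullet}_{\G}(X)$. For properties (ii) and (iv), I would first use Proposition \ref{prop:QRGlobQuot} to decompose the codomains, then exploit the observation that the fibre product $\Lambda^R_{\widehat{\G \wr \Sigma_N}}(\underline{g};\sigma) \times_{\mathsf{O}_2} \Lambda^R_{\widehat{\G \wr \Sigma_M}}(\underline{h};\tau)$ sits canonically as a $B\Z_2$-graded subgroupoid of $\Lambda^R_{\widehat{\G \wr \Sigma_{M+N}}}(\underline{g},\underline{h};\sigma\tau)$. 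Combined with the external-product property of the $KR$-theory power operations $P^{R,\hat{\theta}}_N$ from Proposition \ref{FMKPower}\ref{FMKPowerExtProd} and the naturality of the K\"{u}nneth map, this reduces (ii) to checking that the equivalences $f_{(\underline{g},\underline{h};\sigma\tau)}$ and $f_{(\underline{g};\sigma)} \times_{\mathsf{O}_2} f_{(\underline{h};\tau)}$ agree under the corresponding restriction, a verification that follows by directly matching their effect on objects and on the representatives $\beta^{(\underline{h};\tau)}_{j,i}$ and $\hat{\beta}^{(\underline{h};\tau)}_{j,i}$ entering the formulas, as in the complex case treated in \cite[Theorem 4.11]{huan2018b}. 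Property (iv) follows by the same template, substituting the product group $\G \times \Hh$ for the disjoint pair of cycle contributions.

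Property (iii) is the most delicate step since it involves nested wreath products. I would view $\mathbb{P}^R_M \circ \mathbb{P}^R_N$ as applying the composition \eqref{pgs2r} twice and then identify the resulting composite with a single application for $\Sigma_{MN}$ under the inclusion $\widehat{(\G \wr \Sigma_M) \wr \Sigma_N} \hookrightarrow \widehat{\G \wr \Sigma_{MN}}$. The key ingredients are: the composition law $(\scaleLam M) \circ (\scaleLam N) = \scaleLam {MN}$ up to the natural reparametrization of cycle lengths; compatibility of the $U_R$ functors with iterated diagonals in $\Loop_2^{\ext}$ groupoids, which passes through the quotient by $(\iota_\omega,\Theta_\omega)$; and the assembly of the groupoids $d^{\refl}_{((\underline{\underline{h};\tau});\sigma)}(X^M)$ and $d^{\refl}_{(\underline{h};(\underline{\tau},\sigma))}(X)$ via successive $\mathsf{O}_2$-fibre products of enhanced loop groupoids of different scaling levels. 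Once these groupoid-level compatibilities are in place, the $KR$-theoretic identity follows from Proposition \ref{FMKPower} applied to the $\Z_2$-graded wreath groups. The main obstacle will be to keep track of the $B\mathsf{O}_2$-grading through these iterated constructions: since ${^{\hat{\alpha}}}\Lambda_{\Gh}^R(g)$ is defined as a quotient of $\mathbb{R} \rtimes_\pi C_{\Gh}^R(g)$ by the central element $(-1,(g,1_{\mathbb{T}}))$, a clean comparison of scalings at level $k$ and level $MN$ requires lifting morphisms to $\mathbb{R} \rtimes_\pi \Z$ (the universal cover of $\mathsf{O}_2$ from \eqref{eq:univCovO2}) before quotienting, so that distinct elements are not identified prematurely along different branches of the wreath-product combinatorics encoded in the permutation \eqref{eq:bigSymmAct}.
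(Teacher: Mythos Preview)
Your proposal is correct and follows essentially the same approach as the paper: the paper's proof consists of a single sentence deferring to \cite[Theorem 4.12]{huan2018b} as a ``direct modification,'' and your outline spells out precisely what that modification entails, reducing each property to the complex-case argument while tracking the additional $B\Z_2$-grading and replacing $\mathbb{T}$-fibre products by $\mathsf{O}_2$-fibre products throughout. The only minor point is that your citation to \cite[Theorem 4.11]{huan2018b} should likely be \cite[Theorem 4.12]{huan2018b} to match the paper's reference.
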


\begin{proof}
The proof is a direct modification of the proof of \cite[Theorem 4.12]{huan2018b}.
\end{proof}

\begin{Rem}
\begin{enumerate}[wide,labelwidth=!, labelindent=0pt,label=(\roman*)]
\item Motivated by \cite[Definition 4.3]{ganter2006}, we regard Theorem \ref{thm:QRPower} as the statement that $\{\mathbb{P}^R_N\}_N$ define a Real $H_{\infty}$-structure on $\QR$.
\item The power operations $\{\mathbb{P}^R_N\}_N$ and those of quasi-elliptic cohomology $\{\mathbb{P}_N\}_N$ \cite[\S4]{huan2018b} are compatible in the sense that the following diagram commutes:
\begin{equation}
\label{eq:EllPowOpCompat}
\begin{tikzpicture}[baseline= (a).base]
\node[scale=1.0] (a) at (0,0){
\begin{tikzcd}[column sep=4.5em,row sep=1.5em]
\QR^{\bullet}_{\G}(X) \arrow{r}[above]{\mathbb{P}^R_N} \arrow{d}[left]{c}& \QR^{\bullet}_{\G \wr\Sigma_N}(X^N) \arrow{d}[right]{c} \\
\Q^{\bullet}_{\G}(X) \arrow{r}[below]{\mathbb{P}_N} & \Q^{\bullet}_{\G \wr\Sigma_N}(X^N).
\end{tikzcd}
};
\end{tikzpicture}
\end{equation}

\item The power operation $\{\mathbb{P}^R_N\}_{N \geq 0}$ extends uniquely to one for Tate $KR$-theory
\[ P^{\str,R}_N: 
\QR^{\bullet}_\G(X)\otimes_{\Z[q^{\pm 1}]}\Z\Lser{q}\rightarrow
\QR^{\bullet}_{\G\wr\Sigma_N}(X^N)\otimes_{\Z[q^{\pm 1}]}\Z\Lser{q}.
\]
This is a Real lift of the stringy power operation $P^{\str}_N$ constructed in \cite[Definition 5.10]{ganter2007}. The obvious analogue of diagram \eqref{eq:EllPowOpCompat} commutes. The operations $\{P^{\str,R}_N\}_{N \geq 0}$ are elliptic in the sense of \cite{ando2001}.
\end{enumerate}
\end{Rem}

\bibliographystyle{amsalpha}
\bibliography{QEllRbib}



\end{document}